\newcommand{\A}{{\mathbb{A}}}
\newcommand{\N}{{\mathbb{N}}}
\newcommand{\Z}{{\mathbb{Z}}}
\newcommand{\Q}{{\mathbb{Q}}}
\newcommand{\F}{{\mathbb F}}
\newcommand{\calA}{{\mathcal A}}
\newcommand{\calB}{{\mathcal B}}
\newcommand{\calC}{{\mathcal C}}
\newcommand{\calM}{{\mathcal M}}
\newcommand{\calO}{{\mathcal O}}
\newcommand{\frakV}{{\mathfrak V}}
\newcommand{\frakM}{{\mathfrak M}}
\newcommand{\frakU}{{\mathfrak U}}
\newcommand*\quot[2]{{^{\textstyle #1}\big/_{\textstyle #2}}}%
\newcommand{\bigslant}[2]{{\raisebox{.2em}{$#1$}\left/\raisebox{-.2em}{$#2$}\right.}}
\newcommand{\X}{\mathscr X}			
\newcommand{\s}{\mathscr S}			
\newcommand{\T}{\mathscr Z}			
\newcommand{\OO}{\mathcal O}		%
\newcommand{\OOO}{\mathcal O^\circ}		%
\newcommand{\id}{\mathrm{id}}
\newcommand*{\longhookrightarrow}{\ensuremath{\lhook\joinrel\relbar\joinrel\rightarrow}}
\DeclareMathOperator{\Spf}{Spf}
\DeclareMathOperator{\Spec}{Spec}
\DeclareMathOperator{\Hom}{Hom}
\DeclareMathOperator{\Gal}{Gal}
\DeclareMathOperator{\NL}{NL}
\DeclareMathOperator{\ord}{ord}
\newtheorem{theorem}[subsection]{Theorem}
\newtheorem{corollary}[subsection]{Corollary}
\newtheorem{lemma}[subsection]{Lemma}
\newtheorem{proposition}[subsection]{Proposition}
\theoremstyle{definition}
\theoremstyle{remark}
\newtheorem{remark}[subsection]{Remark}
\newtheorem{example}[subsection]{Example}
\title{Galois descent of semi-affinoid spaces}
\date{\today}
\author{Lorenzo Fantini}
\address{Institut Mathématique de Jussieu, Université Pierre et Marie Curie, Paris, France}
\email{\href{mailto:lorenzo.fantini@imj-prg.fr}{lorenzo.fantini@imj-prg.fr}}
\urladdr{\url{https://webusers.imj-prg.fr/~lorenzo.fantini}}%
\author{Daniele Turchetti}
\address{Laboratoire de Mathématiques Nicolas Oresme, Université de Caen Basse-Normandie, Caen, France}
\email{\href{mailto:daniele.turchetti@unicaen.fr}{daniele.turchetti@unicaen.fr}}
\urladdr{\url{https://turchetti.users.lmno.cnrs.fr/}}
\@date \else {\vskip3ex \centering\footnotesize\@date\par\vskip1ex}\fi
\else \@footnotetext{\@setdate}\fi}
\begin{document}

\begin{abstract}
In this paper we study the Galois descent of semi-affinoid non-archimedean analytic spaces.
These are the non-archimedean analytic spaces which admit an affine special formal scheme as model over a complete discrete valuation ring, such as for example open or closed polydiscs or polyannuli.
Using Weil restrictions and Galois fixed loci for semi-affinoid spaces and their formal models, we describe a formal model of a $K$-analytic space $X$, provided that $X\otimes_KL$ is semi-affinoid for some finite tamely ramified extension $L$ of $K$.
As an application, we study the forms of analytic annuli that are trivialized by a wide class of Galois extensions that includes totally tamely ramified extensions.
In order to do so, we first establish a Weierstrass preparation result for analytic functions on annuli, and use it to linearize finite order automorphisms of annuli.
Finally, we explain how from these results one can deduce a non-archimedean analytic proof of the existence of resolutions of singularities of surfaces in characteristic zero.
\end{abstract}

\maketitle

\setcounter{tocdepth}{1}

\section{Introduction}

Let $K$ be a field which is complete with respect to a non-archimedean absolute value, and let $R$ be its valuation ring.
In this paper we study the Galois descent of semi-affinoid $K$-analytic spaces, that are those non-archimedean $K$-analytic spaces that admit as a model a formal $R$-scheme that is affine and special (that is, formally topologically of finite type).
Roughly speaking, they correspond to the analytic spaces that are bounded (without necessarily being compact), such as open polydiscs and their closed subspaces.
The underlying idea is that semi-affinoid spaces have enough bounded analytic functions to be determined by them, and that they are simpler to study via their $R$-algebras of bounded functions, rather than via the whole $K$-algebra of analytic functions, which for example may not be noetherian.

Let $K'|K$ be a finite extension and let $R'|R$ be the corresponding extension of valuation rings.
We are interested in how one can determine a formal $R$-model of a $K$-analytic space $X$, knowing an affine formal $R'$-model $\X$ of the base change $X\otimes_KK'$ and the action of the Galois group of $K'|K$ on the latter.
We are able to describe the situation completely in the case when $K'$ is a finite tamely ramified Galois extension of $K$.
More precisely, we prove that $X$ is itself semi-affinoid, and has as  $R$-model the Galois-fixed locus of the (dilated) Weil restriction of $\X$ to $R$; this is the content of Theorem~\ref{theorem_descent_model}.

This is inspired by work of Edixhoven \cite{Edixhoven92}, who used this technique to study the behavior of Néron models of abelian varieties under totally tamely ramified base field extensions.
To carry out these constructions in our setting, we need to study the problem of the representability of the Weil restrictions of semi-affinoid spaces and their models, building on results of Bertapelle \cite{Bertapelle00}, and define the dilated Weil restriction, a variant of the Weil restriction of special formal $R$-schemes.
While being merely a computational tool, the dilated Weil restriction allows us to describe explicitly and in a simple way a formal model of the Weil restriction of a semi-affinoid space under a tame extension, which proves to be very useful in practice.
This can be thought of as a correction to the fact that, for ramified extensions, the Weil restriction to $R$ of a special formal $R'$-scheme $\X$ is not a model of the Weil restriction to $K$ of the $K'$-analytic space associated with $\X$. 

Semi-affinoid $K$-analytic spaces appear naturally as fibers of closed points of the specialization map that goes from a $K$-analytic space $X$ to the special fiber of an $R$-model (for example a semi-stable or, more generally, a strict normal crossing model) of $X$.
If $X$ is a smooth and proper $K$-analytic curve and $\X$ is a semi-stable $R$-model of $X$, then all those fibers are open discs and open annuli.
This relation between the structure of non-archimedean analytic curves and their semi-stable reduction goes back to work of Bosch and Lütkebohmert \cite{BosLut85}, where it was used to give a non-archimedean analytic proof of the semi-stable reduction theorem of Deligne and Mumford.
Since in general semi-stable models of a curve exist only after a finite separable base change $K'|K$, it is natural to study the $K$-forms of $K'$-analytic discs and annuli, that are those (strictly) $K$-analytic spaces which become isomorphic to a disc or to an annulus over $K'$.

Tamely ramified forms of discs are well understood.
Ducros \cite{Ducros13} proved that if $K'|K$ is tamely ramified and $V$ is a $K$-analytic space such that $V\otimes_KK'$ is an open (poly)disc, then $V$ is itself a (poly)disc, while the analogous result for (one dimensional) closed discs was proven by Schmidt \cite{Schmidt15}.
As an application of our descent machinery, we devote our attention to the study of forms of annuli, both open and closed.
Unlike the case of discs, annuli admit tamely ramified forms that are not themselves annuli.
In fact, for some $K$-forms $V$ that become annuli under a quadratic extension $K'|K$, the Galois group $\Gal(K'|K)$ may switch the branches of the annulus $V\otimes_KK'$, that means exchange the two irreducible components of its canonical reduction, but this can only happen if $V$ is not an annulus itself.
In Theorem~\ref{theorem_forms_switched_branches} we show that this is indeed possible, and classify these forms up to isomorphism.
On the other hand, we prove in Theorem~\ref{theorem_annuli} that if the extension $K'|K$ is Galois and reasonably well behaved (that is, its residue characteristic does not divide $[K':K]$, its residual extension is solvable, and $K$ contains the $[K':K^{\mathrm{ur}}]$-th roots of unity) and $\Gal(K'|K)$ does not switch the branches of $V\otimes_KK'$, then $V$ is an annulus.
Combining the two results, when $K'|K$ is a quadratic extension we obtain a complete description of the $K$-forms of a $K'$-annulus $X$; or in other words we determine the set $\mathrm{H}^1\big(\Gal(K'|K),\mathrm{Aut}_{K'}(X)\big)$ arising from group cohomology.

The results of Theorem~\ref{theorem_annuli} for all tamely ramified extensions of complete valued fields have been independently proven by Chapuis in \cite{Chapuis2017}, where he also treats the cases of polyannuli and closed polydiscs.
Note that our tools also allow to retrieve the results of Ducros (in dimension one) and Schmidt, provided that the extension turning the $K$-forms into discs is well-behaved in the sense discussed above.
We believe that this is interesting in itself, since our techniques are quite different from those of Ducros, Schmidt, and Chapuis, which are all based on Temkin's theory of graded reduction and on the graded counterparts of several classical algebra results.
Note that their use of graded reductions allows them to work over non-discretely valued fields and to also treat non-strict discs and annuli.
While our techniques do not apply in those cases, they are much more effective to study non-trivial forms, as Theorem~\ref{theorem_forms_switched_branches} shows.

Two $K$-analytic discs are isomorphic over $K$ if and only if their radii differ by an element of the value group $|K|$ of $K$.
The moduli space of $K$-annuli is a bit richer : we describe it in Theorem~\ref{theorem_moduli_space_fractional_annuli}.
As a consequence, we can completely classify up to isomorphisms the forms we obtain in Theorem~\ref{theorem_annuli}.

To prove the results on forms of Section~\ref{section_forms_annuli}, we compute explicitly the Galois-fixed locus of the Weil restriction of an affine $R$-formal model $\X$ of an annulus $V\otimes_KK'$.
An essential ingredient in these computations is a complete description of the possible Galois actions on annuli.
To achieve this, using techniques reminiscent of the theory of Newton polygons we establish a Weierstrass preparation result for functions on both open and closed annuli (Proposition~\ref{proposition_weierstrass_preparation}), and then deduce some linearization results for tame finite order automorphisms of the algebras of annuli (Propositions~\ref{proposition_linearization} and \ref{proposition_linearization_switched_branches}).
The idea of obtaining a linearization result via the Weierstrass preparation theorem is already present in work of Henrio \cite{Henrio1999,Henrio01}, but we are able to generalize his results by allowing closed annuli and a much more general class of automorphisms.
We are convinced that these results are of independent interest besides the applications in the present paper.

The reason we grew interested in the study of forms of annuli was in relation with the first author's previous work \cite{Fantini2017}.
There, he developed a theory of non-archimedean links that provides a solid bridge between the birational geometry of surfaces over $k$ and the theory of semi-stable reduction for curves over $k((t))$.
At the end of the paper we explain how our results on forms of annuli, combined with the techniques of \cite{Fantini2017}, yield a proof of the existence of resolution of singularities for surfaces over an algebraically closed field of characteristic zero (Theorem~\ref{theorem_resolutions}), a classical result of Zariski \cite{Zariski1939}.
This new proof, which is completely non-archimedean analytic in spirit, is inspired by the existing analytic proofs of the semi-stable reduction theorem, as for example in \cite{Ducros}.

Semi-affinoid spaces and their forms appear naturally also in arithmetic geometry.
Let $R$ be be the valuation ring of a $p$-adic field and let $\F_q$ be its residue field.
It is a central problem in the $p$-adic local Langlands program to study the modularity of the liftings of a Galois representation $\overline\rho\colon\Gal(\mathbb Q_p^\mathrm{alg}|\mathbb Q_p)\to\mathrm{GL}_2(\F_q)$ to a representation with values in $\mathrm{GL}_2(R)$.
The rings of deformations of such liftings can be studied with the help of a $\F_q$-variety, the Kisin variety of $\overline\rho$, and the deformations associated with a given closed point of the Kisin variety admit a canonical semi-affinoid structure.
Since this study is generally simpler after passing to a finite and totally tamely ramified Galois extension (see for example \cite{CarusoDavidMezard2016}), it would be interesting to describe the forms of such semi-affinoid spaces.

\medskip

Let us now give a short overview of the content of the paper.
In Section~\ref{section_semi-affinoids}, we introduce semi-affinoid spaces and describe their basic properties.
In Section~\ref{section_weil}, we discuss the Weil restriction functor and its representability for semi-affinoid spaces and their models.
The dilated Weil restriction of an affine special formal scheme is also studied there, while the $G$-fixed locus functors are treated in Section~\ref{section_fixed_locus}.
Section~\ref{section_descent_model} contains the main descent result, Theorem~\ref{theorem_descent_model}, which describes a formal model of a $K$-form of a semi-affinoid $K'$-analytic space $X$ in terms of the $G$-fixed locus of the dilated Weil restriction of a model of $X$.
We then move to the study of annuli, which are defined in Section~\ref{section_annuli}, where we also prove the Weiestrass preparation for functions on annuli (Proposition~\ref{proposition_weierstrass_preparation}), and use it to deduce linearization results for the tame automorphisms of annuli (Propositions~\ref{proposition_linearization} and \ref{proposition_linearization_switched_branches}).
In Section~\ref{section_fractional_annuli} we define annuli with fractional moduli, and describe their moduli space.
Finally, Section~\ref{section_forms_annuli} is devoted to the study of forms of annuli: the triviality in the case of forms with fixed branches is proven in Theorem~\ref{theorem_annuli}, while the case of switched branches is addressed in Theorem~\ref{theorem_forms_switched_branches}.

\subsection*{Notation}
Throughout the paper, we denote by $K$ a field which is complete with respect to a (non-trivial) discrete valuation, by $R$ its valuation ring, by $\pi$ a uniformizer of $R$, and by $k=R/\pi R$ the residue field of $K$.
We denote by $K'$ a finite extension of $K$, by $R'$ its valuation ring, by $\varpi$ a uniformizer of $R'$, and by $k'=R'/\varpi R'$ the residue field of $K'$.
\\
We use straight letters, such as $X$, to denote non-archimedean analytic spaces (over $K$ or $K'$), while curly letters, such as $\X$, will be reserved for formal schemes (over $R$ or $R'$).
\\
All the affinoid algebras and affinoid spaces that we consider are strict.
In particular, all our results can be interpreted in the framework of rigid geometry or in any of the other languages of non-archimedean analytic geometry.
That said, the authors are partial to Berkovich theory, and so the text contains a few remarks about the geometry of the Berkovich spaces underlying the analytic spaces considered.

\subsection*{Acknowledgements}
We warmly thank Federico Bambozzi, Marc Chapuis, Antoine Ducros, Florent Martin, Michel Matignon, Ariane Mézard, Johannes Nicaise, and Jérôme Poineau for interesting discussions about and beyond the content of this paper, and an anonymous referee for his comments and corrections.
We are especially grateful to Marc Chapuis for keeping us updated on the status of his related work \cite{Chapuis2017}.
During the preparation of this work, the first author's research was supported by the European Research Council (Starting Grant project ``Nonarcomp'' no.307856) and by the Agence Nationale de la Recherche (project ``Défigéo''), while the second author's research was supported by the Max Planck Institute for Mathematics and the European Research Council (Starting Grant project ``TOSSIBERG'' no.637027).
Finally, each of us warmly thanks the other author, without whom this paper would have been finished much earlier (but with a lot more mistakes).


\section{Semi-affinoid analytic spaces}
\label{section_semi-affinoids}

Let $R$ be a complete discrete valuation ring, $K$ its fraction field, $\pi$ a uniformizer of $R$, and $k=R/\pi R$ the residue field of $K$.
In this section we recall the notions of special $R$-algebras and their associated $K$-analytic spaces.
\medskip

A topological ring $A$ is called a \emph{noetherian adic ring} if $A$ is noetherian, separated and complete and there exists an ideal $J$ of $A$ such that the set of powers $\{J^\ell\}_{\ell>0}$ of $J$ is a fundamental system of neighborhoods of $0$ in $A$.
Any ideal $J$ as above is called an \emph{ideal of definition} of $A$.
Note that an ideal of definition of $A$ is not unique; for example if $J$ is an ideal of definition then so is $J^n$ for $n\geq1$.
However, there is a {largest ideal of definition} of $A$, the ideal generated by those elements of $A$ which are topologically nilpotent in $A$, i.e. nilpotent in $A/J$ for some (and thus for every) ideal of definition $J$ of $A$. 
A topological $R$-algebra $A$ is called a \emph{special $R$-algebra} if it is a noetherian adic ring and $A/J$ is of finite type over $k$ for some ideal of definition $J$ of $A$.

Recall that $R\{X_1,\ldots,X_m\}=\varprojlim_{\ell\geq1}\big(R/(\pi^\ell)\big)[X_1,\ldots,X_m]$ is the sub-algebra of the $R$-algebra $R[[X_1,\ldots,X_m]]$ consisting of those power series in the variables $(X_1,\ldots,X_m)$ whose coefficients tend to zero in a $\pi$-adic norm.
By \cite[1.2]{Ber96}, the special $R$-algebras are exactly the adic $R$-algebras of the form 
\[
\frac{R\{X_1,\ldots,X_m\}\lbrack\lbrack Y_1,\ldots,Y_n \rbrack\rbrack}{I}\cong \frac{R[[Y_1,\ldots,Y_n]]\{X_1,\ldots,X_m\}}{I},
\]
with ideal of definition generated by $\pi$ and by the $Y_i$'s.
Observe that all special $R$-algebras are excellent: this follows from \cite[Proposition~7]{Valabrega75} when the the characteristic of $K$ is positive and from \cite[Theorem~9]{Valabrega76} when it is zero.

An \emph{affine special formal $R$-scheme} $\X$ is the formal spectrum of a special $R$-algebra $A$, that is
\[
\X=\Spf(A)=\varinjlim_J\Spec\big(\bigslant{A}{J}\big),
\]
where the limit is taken over all the ideals of definition $J$ of $A$.
While this definition can be easily globalized, in this paper we content ourselves with the affine case, mostly as a convenient way to keep track of special $R$-algebras, and we do not make use of any deep results about formal schemes.
The interested reader can find a more thorough introduction to the theory of noetherian formal schemes in \cite{Bosch14}.
\medskip

We now recall how to associate a $K$-analytic space $\X^\beth$ with an affine special formal $R$-scheme $\X$.
This construction was introduced for rigid spaces by Berthelot in \cite{Berthelot}; we refer the reader to that paper as well as to \cite[\S7]{deJ95} and \cite{Ber96} for more detailed expositions.

If $\X$ is the formal spectrum of a special $R$-algebra of the form
$${R\{X_1,\ldots,X_m\}\lbrack\lbrack Y_1,\ldots,Y_n\rbrack\rbrack}/{(f_1,\ldots,f_r)},$$
then the associated $K$-analytic space is
$$\X^\beth = V(f_1,\ldots,f_r)\subset D^m_K\times_K (D^-_K)^{n}\subset\mathbb A^{m+n,\mathrm{an}}_K,$$
where 
$$D^m_K=\big\{x\in\mathbb A^{m,\mathrm{an}}_K\;\big|\;|X_i(x)|\leq1\text{ for all }i=1,\ldots,m\big\}$$
is the $m$-dimensional closed unit disc in $\mathbb A^{m,\mathrm{an}}_K$, 
$$(D^-_K)^{n}=\big\{x\in\mathbb A^{n,\mathrm{an}}_K\;\big|\;|Y_i(x)|<1\text{ for all }i=1,\ldots,n\big\}$$ 
is the $n$-dimensional open unit disc in $\mathbb A^{n,\mathrm{an}}_K$, and $V(f_1,\ldots,f_r)$ denotes the zero locus of the analytic functions $f_i$.

A more intrinsic definition of the analytic space $\X^\beth$ that also has the advantage of being clearly independent of the choice of a presentation of the special $R$-algebra $\calO_\X(\X)$ is the following.
Let $A$ be a special $R$-algebra and let $I$ be the largest ideal of definition of $A$.
For every $n>0$, denote by $A\left[I^n/\pi\right]$ the subring of $A\otimes_RK$ generated by $A$ and by the elements of the form $i/\pi$ for $i\in I^n$, and write $B_n$ for the $I$-adic completion of $A\left[I^n/\pi\right]$.
Finally, set $C_n=B_n\otimes_RK$.
Then the algebras $C_{n}$ are affinoid over $K$ and the canonical morphisms $C_{n+1}\to C_{n}$ identify $\calM(C_n)$ to an affinoid domain of $\calM(C_{n+1})$ and $(\Spf A)^\beth$ to the increasing union of the affinoid spaces $\calM(C_n)$.

\begin{remark}
The construction of $\X^\beth$ is functorial, sending an open immersion to an embedding of a closed subdomain, therefore it globalizes to general special formal $R$-schemes by gluing.
If $\X$ is of finite type over $R$ then $\X^\beth$ is compact, and this construction coincides with the classical one by Raynaud (see \cite{Raynaud74} or \cite[\S7.4]{Bosch14}).
\end{remark}

We say that a $K$-analytic space $X$ is \emph{semi-affinoid} if it is of the form $\X^\beth$ for some affine special formal $R$-scheme $\X$, and call \emph{model} of $X$ any flat affine special formal $R$-scheme whose associated $K$-analytic space is isomorphic to $X$.
The terminology semi-affinoid $K$-analytic space is used in \cite{Martin2017}; those should not be confused with the semi-affinoid $K$-spaces from \cite{Kappen12}.
Every (strictly) affinoid $K$-analytic space is semi-affinoid, admitting a model which is of finite type over $R$. 

Let $\X=\Spf(A)$ be a flat affine special formal $R$-scheme.
By \cite[7.1.9]{deJ95}, the $K$-analytic space $\X^\beth$ depends only on the $K$-algebra $A\otimes_RK$ (such a $K$-algebra is called semi-affinoid in \cite{Kappen12}).
In particular, if $\Spf(B)$ is another model of $\X^\beth$ then $B\otimes_RK \cong A\otimes_RK$, and therefore, since $A\otimes_RK\cong (A/\pi\text{-torsion})\otimes_RK$, every semi-affinoid $K$-analytic space has a model.
Moreover, $\X^\beth$ does not change if we replace $\X$ by its integral closure in the generic fiber, that is the affine special formal $R$-scheme $\Spf(B)$, where $B$ is the integral closure of $A$ in $A\otimes_RK$.
Observe that $B$ is a special $R$-algebra since it is finite over $A$ because $A$ is excellent.
\medskip

If $\X=\Spf(A)$ is a model of a semi-affinoid space $X$, then the canonical homomorphism $A \otimes_R K \to \calO_X(X)$ is injective.
Indeed, let $f$ be an element of $A \otimes_R K$ which vanishes in $\calO_X(X)$, and let $\frakM$ be a maximal ideal of $A \otimes_R K$.
By \cite[Lemma~7.1.9]{deJ95} $\frakM$ corresponds to a point $x$ of $X$, and the image $f(x)$ of $f$ in the completed local ring of $X$ at $x$ coincides with the image $\alpha(f_\frakM)$ via the completion morphism $\alpha\colon (A \otimes_R K)_\frakM \to (A \otimes_R K)_\frakM^\wedge$ of the image $f_\frakM$ of $f$ in the localization of $A \otimes_R K$ at $\frakM$.
It follows that $\alpha(f_\frakM)=0$, hence $f_\frakM=0$ because $(A \otimes_R K)_\frakM$ is a local noetherian ring and so its completion morphism is injective.
Since this is true for every maximal ideal of $A \otimes_R K$, it follows that $f=0$.
Moreover, since $A$ is flat over $R$, the canonical homomorphism $A \to A \otimes_R K$ is also injective.
This shows that if $X$ is reduced then both $A$ and $A \otimes_R K$ are reduced.
Since by \cite[Proposition~7.2.4.c]{deJ95} $X$ is reduced whenever $A$ is reduced, these three properties are actually equivalent.

A reduced semi-affinoid space is completely determined by the ring $\OOO_X(X)$ of \emph{bounded functions}, which is defined as the subring of $\calO_X(X)$ consisting of those analytic functions $f$ such that $|f(x)|\leq1$ for every point $x$ of $X$, as is explained in the following lemma.

\begin{lemma}\label{lemma_canonical_model}
If $X$ is a reduced semi-affinoid $K$-analytic space then $\X=\Spf\big({\OOO_X(X)}\big)$ is a model of $X$.
Moreover $\X$ is the unique model of $X$ which is integrally closed in its generic fiber.
\end{lemma}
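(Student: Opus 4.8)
The plan is to reduce the whole statement to the equality $A = \OOO_X(X)$ for a model $\X = \Spf(A)$ of $X$ that is integrally closed in its generic fiber. By the discussion preceding the statement, $X$ has a model $\Spf(A)$ with $A$ reduced, and replacing $A$ by its integral closure in $A \otimes_R K$ — still a flat special $R$-algebra with the same associated analytic space, as recalled above — we may assume $A$ is integrally closed in $A \otimes_R K$; we view $A \subseteq A \otimes_R K \subseteq \calO_X(X)$ via the injections established above. Granting $A = \OOO_X(X)$, it follows at once that $\Spf(\OOO_X(X)) = \Spf(A) = \X$ is a flat affine special formal $R$-scheme with $\X^\beth \cong X$, hence a model of $X$, integrally closed in its generic fiber by construction; and the same equality, applied to any other model $\Spf(B)$ of $X$ integrally closed in $B \otimes_R K$, gives $B = \OOO_X(X)$, which yields the uniqueness assertion. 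So it remains to establish the two inclusions.

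For $A \subseteq \OOO_X(X)$ I would fix a presentation $A = R\{X_1,\ldots,X_m\}[[Y_1,\ldots,Y_n]]/I$, which realizes $X = \X^\beth$ as a closed analytic subspace of $D^m_K \times_K (D^-_K)^n$. The coordinate functions $X_i$ and $Y_j$ have supremum norm at most $1$ on the polydisc, hence on $X$, and $A$ is topologically generated over $R$ by their images; since the elements of $R$ also have norm at most $1$ and $\OOO_X(X)$ is closed in $A$ for the topology of $A$, every element of $A$ has supremum norm at most $1$ on $X$, i.e. lies in $\OOO_X(X)$.

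The reverse inclusion $\OOO_X(X) \subseteq A$ is the heart of the matter and the step I expect to be the main obstacle. I would pass to Berthelot's affinoid exhaustion $X = \bigcup_n \calM(C_n)$, so that $\calO_X(X) = \varprojlim_n C_n$; since $X$ is reduced each $C_n$ is a reduced strictly $K$-affinoid algebra, hence $C_n^\circ$ is precisely the subring of functions of supremum norm at most $1$ and $\OOO_X(X) = \varprojlim_n C_n^\circ$. Over the discretely valued field $K$, for such a reduced $C_n$ the ring $C_n^\circ$ is a finite module over every formal model of $\calM(C_n)$, in particular over $B_n$, and $\Spf(C_n^\circ)$ is the canonical model of $\calM(C_n)$, integrally closed in $C_n$ (see \cite{Bosch14}); moreover, since $A$ is integrally closed in its generic fiber, the $\Spf(C_n^\circ)$ arise compatibly as the normalizations of the Berthelot pieces $\Spf(B_n)$ of $\Spf(A)$. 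It then remains to show that this compatible system of models has inverse limit exactly $A$, i.e. that every $f = (f_n)_n \in \varprojlim_n C_n^\circ$ lies in $A$: each $f_n$ is integral over $B_n$, and the point is to make these local integral dependences uniform in $n$ so as to conclude that $f$ belongs to the integral closure of $A$ in $A \otimes_R K$, which is $A$. Producing a bound, uniform in $n$, on the denominators needed to express the $f_n$ over $A$ is the delicate step, and it is here that the excellence of $A$ is used. Alternatively, this identification of the bounded functions on a Berthelot generic fiber can presumably be extracted from de Jong \cite{deJ95} or Berthelot \cite{Berthelot}, which would streamline the argument.
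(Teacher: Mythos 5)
Your reduction of the statement to the single equality $A=\OOO_X(X)$ for a model $\Spf(A)$ integrally closed in its generic fiber is exactly the paper's strategy, and your argument for the easy inclusion $A\subseteq\OOO_X(X)$ is fine. But the reverse inclusion $\OOO_X(X)\subseteq A$, which you correctly identify as the heart of the matter, is left as a sketch with an acknowledged missing step, and that is precisely where all the content of the lemma sits. The paper does not prove this inclusion from scratch either: it invokes \cite[Theorem~2.1]{Martin2017}, which states exactly that a reduced special $R$-algebra that is integrally closed in its generic fiber coincides with the ring of power-bounded functions on its Berthelot generic fiber. Without that citation or a complete substitute argument, your proof is incomplete.

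Concretely, two things in your sketch are not secured. First, before one can speak of $f\in\varprojlim_n C_n^\circ$ being ``integral over $A$'', one must know that $f$ lies in $A\otimes_RK$ at all; a priori a compatible system $(f_n)$ only gives an element of $\calO_X(X)=\varprojlim_n C_n$, and the image of $A\otimes_RK$ in $\calO_X(X)$ is in general a proper subring (already for the open disc, $\calO_X(X)$ is much larger than $R[[Y]]\otimes_RK$). Second, even granting $f\in A\otimes_RK$, the passage from ``$f_n$ is integral over $B_n$ for each $n$'' to ``$f$ is integral over $A$'' is exactly the uniformity problem you flag and do not resolve; the finiteness of $C_n^\circ$ over $B_n$ gives bounds that depend on $n$, and nothing in your argument controls them as $n\to\infty$. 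If you want a self-contained proof you would need to supply this (Martin's proof does real work here); otherwise the honest fix is to cite \cite[Theorem~2.1]{Martin2017} at this point, as the paper does, after which your framing of the uniqueness assertion goes through verbatim.
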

\begin{proof}
Let $\Spf(A)$ be a model of $X$.
By replacing $A$ with its integral closure in $A\otimes_R K$ (which is itself special because $A$ is excellent) we can assume that $A$ is integrally closed in its generic fiber.
Moreover, $A$ is reduced because $X$ is reduced.
Therefore, by \cite[Theorem 2.1]{Martin2017} we have $A\cong\OOO_X(X)$.
Observe that $\OOO_X(X)$ is flat over $R$ because the analytic function induced by $\pi$ on $X$ becomes invertible in $\OO_X(X)$.
This proves both parts of our statement.
\end{proof}

If $X=(\Spf A)^\beth$ as in the proof above is not reduced, then the canonical injection $A\to\OOO_X(X)$, which is an isomorphism after killing the nilradicals, may fail to be surjective in general, and $\OOO_X(X)$ may then fail to be special over $R$. See \cite[Example 2.3]{Martin2017} for an example.

Let $X$ be a reduced semi-affinoid $K$-analytic space. 
We call \emph{canonical model} of $X$ the special formal $R$-scheme $\X=\Spf\big({\OOO_X(X)}\big)$, and we define the \emph{canonical reduction} $X_0$ of $X$ to be the reduced affine special formal $k$-scheme $(\X_s)_{red}$ associated with the special fiber $\X_s=\X\otimes_{R} k$ of $\X$.
We say that a semi-affinoid $K$-analytic space $X$ is \emph{distinguished} if it is reduced and the special fiber $\X_s$ of its canonical model is already reduced, i.e. if it coincides with $X_0$.

\begin{remark}
Let $X$ be an affinoid $K$-analytic space.
Then $X$ is distinguished if and only if its affinoid algebra $\calA$ is distinguished in the classical sense, see \cite[\S6.4.3]{BGR}.
Moreover, when this is the case then the canonical reduction $X_0$ of $X$ is the usual reduction 
 of the affinoid space $X$.
Indeed, $\calA$ is distinguished if and only if it is reduced and its spectral norm $|\cdot|_{\sup}$ takes values in $|K|$ (this is \cite[6.4.3/1]{BGR}, since by \cite[3.6]{BGR} discretely valued fields are stable).
In particular, to show both implications we can assume that $X$ is reduced, and therefore by Lemma~\ref{lemma_canonical_model} its canonical model is $\X=\Spf\calA^\circ$, where $\calA^\circ=\big\{f\in\calA\,\big|\,|f|_{\sup}\leq1\big\}$.
Observe that $|\calA|_{\sup}=|K|$ if and only if $\pi\calA^\circ=\big\{f\in\calA\,\big|\,|f|_{\sup}\leq|\pi|\big\}$ coincides with $\calA^{\circ\circ}=\big\{f\in\calA\,\big|\,|f|_{\sup}<1\big\}$.
It follows that if $\calA$ is distinguished then the special fiber $\X_s=\Spec\big(\calA^\circ/\pi\calA^{\circ}\big)$ of $\X$ coincides with the usual reduction $\Spec\big(\calA^\circ/\calA^{\circ\circ}\big)$ of $X$, which in particular proves that $X$ is distinguished.
Conversely, assume that $|\calA|_{\sup}\supsetneq|K|$, so that $\calA$ is not distinguished.
Then there exists an element $f\in \calA$ such that $|\pi|<|f|_{\sup}<1$.
This implies that there exists some $n>0$ such that $f^n\in\pi\calA^\circ$ while $f\in \calA^\circ\setminus \pi\calA^\circ$, hence $\X_s$ is not reduced, which proves that $X$ is not distinguished.
\end{remark}

We conclude the section with a simple lemma.

\begin{lemma}\label{lemma_reduced_special_fiber_implies_canonical_model}
Let $X$ be a semi-affinoid $K$-analytic space and assume that $X$ has a model $\X$ with reduced special fiber.
Then $X$ is distinguished and $\X$ is its canonical model.
\end{lemma}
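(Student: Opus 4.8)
The plan is to deduce everything from Lemma~\ref{lemma_canonical_model}. Write $\X=\Spf(A)$, where $A$ is a special $R$-algebra that is flat over $R$ (since $\X$ is a model) and satisfies $A/\pi A$ reduced (since $\X_s=\Spec(A/\pi A)$ is reduced by hypothesis). Once we have checked that $X$ is reduced and that $A$ is integrally closed in its generic fiber $A\otimes_RK$, the uniqueness statement in Lemma~\ref{lemma_canonical_model} will force $A\cong\OOO_X(X)$, so that $\X$ is the canonical model of $X$; and the canonical reduction $X_0=(\X_s)_{\mathrm{red}}$ then coincides with $\X_s$, which is exactly the condition that $X$ be distinguished. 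Thus the two things to verify are purely ring-theoretic properties of $A$.

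First I would show that $A$ is reduced. Let $N$ be the nilradical of $A$; it is finitely generated because $A$ is noetherian. If $a\in N$, then its image in $A/\pi A$ is nilpotent, hence zero, so $a=\pi a_1$ with $a_1\in A$; since $A$ is flat over $R$ the element $\pi$ is a nonzerodivisor, so from $a^n=0$ we get $a_1^n=0$, i.e.\ $a_1\in N$. Hence $N=\pi N$. Because $A$ is complete with respect to an ideal of definition containing $\pi$, the element $\pi$ lies in the Jacobson radical of $A$, and Nakayama's lemma gives $N=0$. Then $X=\X^\beth$ is reduced by \cite[Proposition~7.2.4.c]{deJ95}.

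Next I would check that $A$ is integrally closed in $A\otimes_RK$. Given $b\in A\otimes_RK$ integral over $A$, write $b=c/\pi^m$ with $c\in A$ and $m\geq0$ minimal; multiplying an integral equation $b^d+a_1b^{d-1}+\dots+a_d=0$ (with $a_i\in A$) by $\pi^{md}$ shows that $c^d\in\pi^mA$, so if $m\geq1$ the image of $c$ in $A/\pi A$ is nilpotent, hence zero, whence $c\in\pi A$ and $b$ admits $\pi^{m-1}$ as a denominator — contradicting minimality. So $m=0$ and $b\in A$. Combining this with the reducedness of $X$ and the uniqueness clause of Lemma~\ref{lemma_canonical_model} concludes the argument as outlined above. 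The only step where some (very mild) care is needed is the reducedness of $A$: rather than trying to descend naively through the powers $\pi^mA$, one invokes Nakayama's lemma, using that $\pi$ belongs to the Jacobson radical of the adically complete ring $A$; everything else is just bookkeeping around Lemma~\ref{lemma_canonical_model}.
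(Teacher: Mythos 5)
Your proposal is correct and follows essentially the same route as the paper: reducedness of $A$ from flatness plus reduced special fiber (which the paper asserts and you justify via $N=\pi N$ and Nakayama), then the same denominator-clearing argument to show $A$ is integrally closed in $A\otimes_RK$, and finally the uniqueness clause of Lemma~\ref{lemma_canonical_model}. The only difference is cosmetic: the paper writes $\alpha=a\pi^r$ with $a\notin\pi A$ where you take a minimal power of $\pi$ in the denominator.
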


\begin{proof}
Since $\X$ is $R$-flat and has reduced special fiber then it is reduced, so $X$ is reduced as well.
By Lemma~\ref{lemma_canonical_model} we have to show that $\X$ is integrally closed in its generic fiber. 
Write $\X=\Spf(A)$ and let $\alpha$ be an element of $A\otimes_RK=A[\pi^{-1}]$ such that $\alpha^m+a_{m-1}\alpha^{m-1}+\ldots+a_0=0$, where the $a_i$ are elements of $A$.
We can write $\alpha=a\pi^r$, where $a$ is an element of $A\setminus \pi A$ and $r\in\mathbb Z$.
If $r<0$, since $A$ has no $\pi$-torsion we can divide both sides of the equality above by $\pi^{mr}$ to deduce that $a\in \pi A$, which gives a contradiction.
Therefore $r\geq0$, hence $\alpha\in A$, which is what we wanted to prove.
\end{proof}


\section{Weil restrictions}
\label{section_weil}

In this section we introduce the Weil restriction functor and discuss its representability for semi-affinoid analytic spaces and their models.
We then define the dilated Weil restriction, a modified version of the usual Weil restriction for formal schemes, as a notational tool that is useful to describe in a simple way the Weil restriction of a semi-affinoid space.
\medskip

Let $\calC$ be a category with fiber products.  
For an object $\s$ of $\calC$ denote by $\calC_\s$ the category of $\s$-objects of $\calC$ and let $\s' \to \s$ and $\X \to \s'$ be two morphisms in $\calC$. 
The \emph{Weil restriction} of $\X$ to $\s$ is defined as the functor 
\begin{alignat*}{2}
\prod\nolimits_{\s'|\s}\X \colon && \calC_\s & \longrightarrow  (Sets) \\
					  && (\T\to \s) & \longmapsto  \Hom_{\s'}(\T\times_\s\s',\X).
\end{alignat*}
When this functor is representable we also denote by $\prod_{\s'|\s}\X$ the object of $\calC_\s$ which represents it, and call it \emph{Weil restriction} of $\X$ to $\s$. In this paper we only deal with Weil restrictions of affine special formal schemes with respect to a finite extension $R'|R$ of complete discrete valuation rings, and with Weil restrictions of semi-affinoid analytic spaces with respect to a finite separable extension $K'|K$ of complete discrete valuation fields. 
In these settings, we write $\prod\nolimits_{R'|R}$ instead of $\prod\nolimits_{\Spf R'|\Spf R}$ and $\prod\nolimits_{K'|K}$ instead of $\prod\nolimits_{\mathcal M(K')|\mathcal M(K)}$ respectively.

Observe that it follows from the definition that the Weil restriction is compatible with base change, that is, for every morphism $\T\to \s$ in $\calC$ we have
\[
\prod\nolimits_{\T'|\T}\big(\X\times_{\s'}\T'\big)=\Big(\prod\nolimits_{\s'|\s}\X\Big)\times_\s\T,
\]
where $\T'=\T\times_\s\s'$.

When $\calC$ is the category of schemes, the representability of the Weil restriction functor is well understood: we refer the reader to \cite[\S 7.6]{NeronModels} for a thorough discussion.

As observed in \cite{Bertapelle00}, the representability of the Weil restriction functor on the category of formal $R$-schemes can be studied in terms of the representability of the Weil restriction of ordinary schemes in the following way.
If $\X=\varinjlim X_n$ is a formal scheme over $\s'=\varinjlim \s'_n$, where each $X_n$ is a scheme over $\s'_n$ such that $\prod_{\s'_n|\s_n}X_n$ is representable for all $n$, then $\prod_{\s'|\s}\X$ is represented by $\varinjlim\prod_{\s'_n|\s_n}X_n$.
See Theorem~1.4 of \emph{loc. cit.} for a precise statement including conditions for the representability. 
In the case of an affine special formal scheme the Weil restriction is always representable, and the formal scheme representing it can be described explicitly in a simple way; this is the content of the next lemma. 
Recall that an element of a noetherian adic ring $A$ is said to be topologically nilpotent if its powers converge to zero in $A$, or equivalently if it belongs to some ideal of definition of $A$.

\begin{lemma}\label{lemma_restriction_affine}
Let $A$ and $A'$ be two special $R$-algebras.
Assume that $A'$ is a free $A$-module with basis $e_0,\ldots,e_{m-1}$ and that $e_0,\ldots,e_d$ are the elements of this basis which are topologically nilpotent in $A'$.
Let $\calA'=A'\{\underline X\}[[\underline Y]]/I$ be a special $A'$-algebra, where $\underline X$ is an $r$-tuple and $\underline Y$ is an $s$-tuple of variables.
Then $\prod_{\Spf A'|\Spf A}\Spf \calA'$ is represented by the formal spectrum of the special $A$-algebra
\[
\frac{A\{\underline X_{0},\ldots,\underline X_{m-1},\underline Y_{0},\ldots,\underline Y_{d}\}[[\underline Y_{d+1},\ldots,\underline Y_{m-1}]]}{I^{\mathfrak{c}}},
\]
where the $\underline X_i$ are $r$-tuples and the $\underline Y_j$ are $s$-tuples of variables, and $I^\mathfrak{c}$ is the \emph{ideal of the coefficients} of $I$, that is the ideal generated by the coefficients appearing when expressing the elements of $I$ in the basis $e_0,\ldots,e_{m-1}$ via the isomorphism $A'\cong e_0A\oplus\ldots\oplus e_{m-1}A$ and writing $T=T_0e_0+\ldots+T_{m-1}e_{m-1}$ for every variable $T$ among the $\underline X$ and $\underline Y$.
\end{lemma}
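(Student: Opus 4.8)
The plan is to reduce the statement about formal schemes to a Yoneda-style verification of the universal property of the Weil restriction, making explicit the correspondence between $A'$-algebra homomorphisms out of $\calA'$ and $A$-algebra homomorphisms out of the displayed algebra. First I would set up notation: fix the basis $e_0,\dots,e_{m-1}$ of $A'$ over $A$, with $e_0,\dots,e_d$ the topologically nilpotent ones, and call $B$ the $A$-algebra displayed in the statement (the quotient of $A\{\underline X_0,\dots,\underline X_{m-1},\underline Y_0,\dots,\underline Y_d\}[[\underline Y_{d+1},\dots,\underline Y_{m-1}]]$ by $I^{\mathfrak c}$). By Bertapelle's criterion recalled just before the lemma, $\prod_{\Spf A'|\Spf A}\Spf\calA'$ is representable and equals $\varinjlim_n \prod_{\Spec A'_n|\Spec A_n}\Spec(\calA'/J^n\calA')$ for suitable ideals of definition, so it suffices to identify the algebra representing it. The cleanest route, though, is to verify directly that $\Spf B$ represents the functor on the category of affine special formal $A$-schemes (which suffices, since an arbitrary special formal $A$-scheme is covered by affine ones and the functor is a sheaf), i.e. that for every special $A$-algebra $C$ one has a bijection, functorial in $C$,
\[
\Hom_{A\text{-alg}}^{\mathrm{cont}}(B,C)\;\xrightarrow{\ \sim\ }\;\Hom_{A'\text{-alg}}^{\mathrm{cont}}\big(\calA',\,C\otimes_A A'\big).
\]

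The key computation is the following. Giving a continuous $A'$-algebra map $\calA'=A'\{\underline X\}[[\underline Y]]/I \to C':=C\otimes_A A'$ amounts to giving images $\xi\in (C')^r$ for $\underline X$ and $\eta\in (C')^s$ for $\underline Y$ subject to two constraints: the $\eta_j$ must be topologically nilpotent in $C'$ (so that the power series in $\underline Y$ converge), and the tuple $(\xi,\eta)$ must kill $I$. Now $C'=C\otimes_A A' = \bigoplus_{i=0}^{m-1} C e_i$ as a $C$-module, so each coordinate of $\xi$ and $\eta$ decomposes uniquely as $\xi_k=\sum_i \xi_{k,i}e_i$, $\eta_l=\sum_i \eta_{l,i}e_i$ with $\xi_{k,i},\eta_{l,i}\in C$. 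The assignment $\underline X_i\mapsto \xi_{\bullet,i}$, $\underline Y_i\mapsto \eta_{\bullet,i}$ is exactly the data of a continuous $A$-algebra map from the free special $A$-algebra on the variables $\underline X_0,\dots,\underline X_{m-1},\underline Y_0,\dots,\underline Y_{m-1}$ into $C$, and by the very definition of $I^{\mathfrak c}$ — expand each generator of $I$ in the $e_i$-basis after substituting the symbolic decompositions $T=\sum_i T_i e_i$ — such a map kills $I^{\mathfrak c}$ if and only if the original map kills $I$. This gives the bijection of sets; functoriality in $C$ is immediate from the construction.

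The one genuine subtlety, and the step I expect to be the main obstacle, is matching up the topologies — that is, explaining why the variables $\underline Y_0,\dots,\underline Y_d$ end up among the "restricted" (Tate) variables while $\underline Y_{d+1},\dots,\underline Y_{m-1}$ end up among the power-series variables, and correspondingly why the right $A$-algebra $B$ is special with the stated presentation. The point is that for $\eta_l=\sum_i\eta_{l,i}e_i$ to be topologically nilpotent in $C'$, one needs each $\eta_{l,i}e_i$ to be topologically nilpotent in $C'$; since $e_i$ is a unit in $A'$ (hence in $C'$) for $i>d$ while $e_i$ is topologically nilpotent for $i\le d$, topological nilpotence of $\eta_l$ forces $\eta_{l,i}$ to be topologically nilpotent in $C$ for $i>d$, whereas for $i\le d$ no condition on $\eta_{l,i}$ is imposed (the factor $e_i$ already supplies topological nilpotence). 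Thus the variables $\underline Y_{d+1},\dots,\underline Y_{m-1}$ must range over topologically nilpotent elements — they belong in the ideal of definition, i.e. among the $[[\ \cdot\ ]]$-variables — while $\underline Y_0,\dots,\underline Y_d$ (like all the $\underline X_i$) may take arbitrary values and hence appear as Tate variables. Here one uses that $A'$ is free over $A$ and that, writing $J'$ for the largest ideal of definition of $A'$, the topologically nilpotent elements of $A'$ in the chosen basis are precisely the $A$-span of $e_0,\dots,e_d$ up to units — a small argument using that $A'/J'$ is the product of fields (residue-field factors) so idempotent/unit considerations pin down which basis vectors are units. Once this topological bookkeeping is in place, continuity of the comparison maps and the fact that $B$ as presented is a special $A$-algebra are formal, and the identification $\prod_{\Spf A'|\Spf A}\Spf\calA'=\Spf B$ follows from Yoneda together with the representability already guaranteed by Bertapelle's theorem.
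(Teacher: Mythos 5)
Your overall strategy is the same as the paper's: reduce to checking the universal property on adic $A$-algebras, decompose the images of the variables along the basis $e_0,\dots,e_{m-1}$ of $C\otimes_AA'$, observe that killing $I$ is equivalent to killing $I^{\mathfrak c}$, and sort the $\underline Y$-variables into Tate and power-series variables according to whether the corresponding $e_i$ is topologically nilpotent. (The paper only writes this out for $\calA'=A'[[Y]]$ with a single power-series variable and refers to Bosch--L\"utkebohmert--Raynaud for the rest; that is a cosmetic difference.)

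However, the justification you give for the step you yourself single out as the crux is incorrect. It is not true that the basis elements $e_i$ with $i>d$ are units in $A'$, nor that $A'/J'$ is a product of fields. For instance, take $A=R$ and $A'=R\times R$, a special $R$-algebra that is free of rank $2$ over $R$ with basis $e_0=(1,0)$, $e_1=(0,1)$: neither basis vector is topologically nilpotent and neither is a unit. Or take $A=R\{x^2\}\subset A'=R\{x\}$ with basis $1,x$: the element $x$ is neither topologically nilpotent nor invertible, and $A'/J'\cong k[x]$ is not a product of fields. So your deduction ``$\eta_{l,i}e_i$ topologically nilpotent implies $\eta_{l,i}$ topologically nilpotent, because $e_i$ is a unit'' does not go through as written. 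What is actually needed --- and what the paper asserts at the corresponding point, essentially without further argument --- is that the ideal of topologically nilpotent elements of $C\otimes_AA'$ decomposes along the basis as $\bigl(\bigoplus_{i\le d}Ce_i\bigr)\oplus\bigl(\bigoplus_{i>d}J_Ce_i\bigr)$, where $J_C$ is the largest ideal of definition of $C$. One inclusion is clear since this set is an ideal containing the topologically nilpotent $e_i$ and $J_C$; the other is obtained by passing to the quotient $(C/J_C)\otimes_A(A'/J')$ and using that the images of $e_{d+1},\dots,e_{m-1}$ there are linearly independent over $C/J_C$ --- a statement about the reductions, not a unit argument, and one that is genuinely delicate (it can fail when $(C/J_C)\otimes_A(A'/J')$ acquires nilpotents, which is why the lemma is only ever applied in situations with separable residue extensions). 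Apart from this incorrectly justified step, your argument matches the paper's.
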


\begin{proof}
The proof is a slight extension of the one of \cite[\S 7.6, Theorem 4]{NeronModels}.
The only difficulty which is not present in \emph{loc. cit.} is the case $\calA'=A'[[Y]]$, where $Y$ is a single variable.
Observe that being covered by affine formal $R$-schemes, any formal scheme is determined by its functor of points on affine formal $R$-schemes, that is by its points with values in adic $R$-algebras.
Therefore, what we need to show is that for any adic $A$-algebra $L$ there is a bijection
\[
\Hom_{A'}\big(A'[[Y]],L\hat\otimes_AA'\big)\cong\Hom_A\big(A\{Y_0,\ldots,Y_d\}[[Y_{d+1},\ldots,Y_{m-1}]],L\big)
\]
which is functorial in $L$, where the $\Hom$ sets are sets of continuous homomorphisms.
Since $A'$ is finite over $A$ we have $L\hat\otimes_AA' \cong L\otimes_AA'$ by \cite[3.7.3/6]{BGR}.
A continuous $A'$-homomorphism $\sigma'\colon A'[[Y]]\to L\otimes_AA'$ is determined by $\sigma'(Y)$, which is a topologically nilpotent element of $L\otimes_AA'$. 
Using the decomposition
\[
L\otimes_AA'\cong\bigoplus_{i=0}^{m-1}L e_i
\]
we write
\[
\sigma'(Y)=\sum_{i=0}^{m-1}\widetilde\sigma(Y_i)\otimes e_i,
\]
where the $\widetilde\sigma(Y_i)$ are elements of $L$.
These elements give rise to a homomorphism of $A$-algebras $\widetilde\sigma \colon A[Y_0,\ldots,Y_{m-1}]\to L$.
The fact that $\sigma'(Y)$ is topologically nilpotent translates to the fact that $\widetilde\sigma(Y_i)\otimes e_i$ is topologically nilpotent in $L\otimes_AA'$ for each $i$.
This imposes no condition on $\widetilde\sigma(Y_0),\ldots,\widetilde\sigma(Y_d)$, while $\widetilde\sigma(Y_{d+1}),\ldots,\widetilde\sigma(Y_{m-1})$ have to be topologically nilpotent in $L$.
Therefore $\widetilde\sigma$ extends to a continuous morphism of $A$-algebras $\sigma \colon A\{Y_0,\ldots,Y_d\}[[Y_{d+1},\ldots,Y_{m-1}]]\to L$.
The association $\sigma'\mapsto\sigma$ is the bijection we are after, and the general case is then a simple translation of the arguments of \cite[\S 7.6, Proposition 2 and Theorem 4]{NeronModels}.
\end{proof}

In the remaining part of the section we discuss Weil restrictions in the context of $K$-analytic spaces, building on the work of Bertapelle \cite{Bertapelle00}.

Let $K'$ be a finite and separable non-archimedean extension of $K$.
As is the case for affine schemes and formal schemes, Weil restrictions of affinoid $K'$-analytic spaces to $K$ are always representable.
However, the argument of Lemma~\ref{lemma_restriction_affine} needs to be refined, since if $\sigma'\colon K'\{X\}\to K'$ is any bounded homomorphism of $K'$-affinoid algebras then the induced homomorphism $\sigma\colon K\{X_0\ldots,X_{m-1}\}\to K$ may not be bounded.
Indeed, one can have $||\sigma'(X)||_\mathrm{sup}=||\sum_{i=0}^{m-1}\sigma(X_i)\otimes e_i||_\mathrm{sup}\leq 1$ but $||\sigma(X_i)||_\mathrm{sup} > 1$ for some $i$.

To control the sup norm of $\sigma'(X)$ it is useful to consider characteristic polynomials.
Let $\calA=K'\{X_1,\ldots,X_n\}/I$ be a $K$-strictly affinoid algebra.
Recall that, for each $n$-tuple of positive real numbers ${r} = (r_1, \dots, r_n)$, the $K$-algebra $K\{r_1^{-1}X_1, \dots, r_n^{-1}X_n\}:= \{\sum_I a_I X^I | |a_I|r^I \to 0 \}$ is the algebra of functions that converge on the closed $K$-polydisc of polyradius ${r}$.
By \cite[Proposition 1.8]{Bertapelle00}, we have
\(
\prod\nolimits_{K'|K}\calM(\calA) \cong \varinjlim_{\lambda\in \N} \calM \left( \calB_\lambda \right),
\)
where $\calB_\lambda$ is the strictly $K$-affinoid algebra
\begin{equation}\label{equazione_restrizione_affinoide}
\resizebox{1\textwidth}{!}{$
	\calB_\lambda
	=
	\frac{K\big\{|\pi|^\lambda X_{0,1},\ldots,|\pi|^\lambda X_{0,n},|\pi|^\lambda X_{1,1},\ldots,|\pi|^\lambda X_{m-1,n}\big\}\big\{Z_{0,1}, \dots, Z_{m-1,n}\big\}}{\big(I^\mathfrak{c}, Z_{0,1}-c_0(X_{\bullet,1}), \ldots, Z_{m-1,n}-c_{m-1}(X_{\bullet,n})\big)} ,
$}
\end{equation}
$c_j\in K[x_0,\ldots,x_{m-1}]$ is the coefficient of the term of degree $j$ of the characteristic polynomial of the endomorphism given by the multiplication by $\sum_{i=0}^{m-1} x_ie_i$ in the $K[x_0\ldots,x_{m-1}]$-vector space $K'[x_0\ldots,x_{m-1}]$, and as before $I^\mathfrak{c}$ is the ideal of the coefficients of $I$.
Moreover, since $K'|K$ is separable, the Weil restriction $\prod\nolimits_{K'|K}\calM(\calA)$ is compact, therefore there exists a $\lambda_0 \in \N$ such that for every $\lambda>\lambda_0$ we have 
\(
\prod\nolimits_{K'|K}\calM(\calA) \cong \calM \left( \calB_\lambda \right).
\)
In other words, $\prod\nolimits_{K'|K}\calM(\calA)$ is the affinoid subspace of $(\mathbb A^{nm}_K)^\mathrm{an}$ cut out by the elements of $I^\mathfrak{c}$ and by the conditions $|c_j(X_{\bullet,i})|\leq 1$.

\begin{example}\label{esempio_disco_gtt}
Let $p \neq 2$ be a prime number, set $R=\Z_p$ and $R'=\Z_p[\sqrt{p}]$, and let $K$ and $K'$ be the respective fraction fields. 
Then the set $\{1, \sqrt{p}\}$ is a basis both for $R'$ over $R$ and for $K'$ over $K$, and therefore Lemma \ref{lemma_restriction_affine} gives $\prod_{R'|R}\Spf R'\{X\} \cong \Spf \big(R\{X_0, X_1\}\big)$.
On the other hand, by the equation (\ref{equazione_restrizione_affinoide}) the Weil restriction of the unit disc is given by
\[ 
\prod\nolimits_{K'|K}\calM(K'\{X\})\cong \calM\big(K\{|p|^\lambda X_0, |p|^\lambda X_1\}\{ -2X_0, X_0^2-pX_1^2\}\big),
\]
which, for $\lambda$ big enough, is the analytic subspace of $\A^{2, an}_K$ defined by the inequalities
\[
\begin{cases} 
|X_0|\leq 1 \\ |X_0^2-pX_1^2|\leq 1, 
\end{cases}
\] 
which is isomorphic to the 2-dimensional polydisc of polyradius $\big(1, |p|^{-\sfrac{1}{2}} \big)$.
Indeed, this polydisc is clearly contained in $\prod\nolimits_{K'|K}\calM(K'\{X\})$.
For the converse inclusion, observe that if $|pX_1^2|=|X_0^2|$ then $|pX_1^2|\leq 1$, while if $|pX_1^2|\neq|X_0^2|$ then $|X_0^2-pX_1^2|= \max\big\{|X_0|^2, |p||X_1|^2\big\}\leq 1$.
In both cases, $|X_1|\leq {1}/{|\sqrt{p}|}$.
\end{example}

Example~\ref{esempio_disco_gtt} shows that $\prod\nolimits_{K'|K}\big((\Spf R'\{X\})^\beth\big)$ and $\big(\prod\nolimits_{R'|R}\Spf R'\{X\}\big)^\beth$ do not coincide in general. 
It is therefore convenient to introduce a variant of the formal Weil restriction to be able to describe the Weil restriction of a semi-affinoid $K'$-analytic space in terms of an $R$-model.
In order to do this, let us fix some notation.
Given two coprime positive integers $a$ and $b$, we set $c=b\lceil a/b \rceil -a\geq0$, where ${\lceil a/b \rceil}$ is the ceiling of $a/b$, and we write 
\begin{equation}\label{notation_fractional_special_algebras}
R\big\{|\pi|^{\sfrac{a}{b}}X\big\}=\frac{R\{W,Z\}}{(\pi^c Z-W^b)}.
\end{equation}
This way, after a base change to $K$ we obtain 	
\begin{align*}
R\big\{|\pi|^{\sfrac{a}{b}}X\big\}\otimes_RK & \cong K\{W,Z\}/(\pi^c Z-W^b) \cong K\big\{|\pi|^{{\lceil {\sfrac{a}{b}} \rceil}}X, Z\big\}/(Z-\pi^{a}X^b) 
\\
& \cong K\big\{|\pi|^{\sfrac{a}{b}}X\big\},
\end{align*}
which is the $K$-affinoid algebra of the closed disc of radius $|\pi|^{-\sfrac{a}{b}}$, where the second isomorphism is defined by sending $W$ to $\pi^{{\lceil {\sfrac{a}{b}} \rceil}}X$.
Similarly, we set $R\big[\big[\,|\pi|^{\sfrac{a}{b}}X\big]\big]=R[[W,Z]]/(\pi^c Z-W^b)$.
This notation is primarily a way of keeping track of equations when considering models of polydiscs of rational polyradii that are contained in a given polydisc.
For example, for $n\in\N$ then $R\big\{|\pi|^{n}X\big\}$ is isomorphic to the algebra $R\{W\}$ of a closed disc of radius one, but this notation allows us to keep equations in the variable $X$, avoiding a change of variables.
More interesting examples with rational radii are given in section \ref{section_fractional_annuli}.

Let $A'=R'\{\underline X\}[[\underline Y]]/I$ be a special $R'$-algebra, where $\underline X$ is an $r$-tuple and $\underline Y$ is an $s$-tuple of variables, and set $\X = \Spf{A'}$. 
We define the \emph{dilated Weil restriction} $\prod_{R'|R}^\mathrm{dil}\X$ of $\X$ to $R$ to be the affine special $R$-formal scheme $\varinjlim_{\lambda\in \N} \Spf(A_\lambda)$, with $A_\lambda$ the special $R$-algebra
\[
\resizebox{1\textwidth}{!}{$A_\lambda = \frac{R\big\{|\pi|^\lambda \underline X_0,\ldots,|\pi|^\lambda\underline X_{m-1},|\pi|^\lambda\underline Y_{0},\ldots,|\pi|^\lambda\underline Y_{m-1}\big\}\big\{Z_{0,1}, \dots, Z_{m-1,r}\big\}\big[\big[\,Z_{0,r+1}, \dots, Z_{m-1,r+s}\big]\big]}{\big (I^{\mathfrak{c}},Z_{0,1}-c_0(X_{\bullet,1}), \ldots, Z_{m-1,r}-c_{m-1}(X_{\bullet,r}), Z_{0,r+1}-c_0(Y_{\bullet,1}), \ldots, Z_{m-1,r+s}-c_{m-1}(Y_{\bullet,s})\big)},
$}
\]

where, as before, $c_j$ is the coefficient of the term of degree $j$ in the characteristic polynomial of $\sum_{i=0}^{m-1} x_ie_i\in K'[x_0\ldots,x_{m-1}]$, and $I^\mathfrak{c}$ is the ideal of the coefficients of $I$. 
Note that, since $K'|K$ is separable, \cite[Proposition 1.8]{Bertapelle00} ensures that the $K$-analytic space associated with the dilated Weil restriction is contained in a compact subspace of the analytic affine space, and therefore there exists a positive integer $\lambda_0$ such that $\prod_{R'|R}^\mathrm{dil}\X= \Spf A_\lambda$ for every $\lambda \geq \lambda_0$.

The following proposition shows that, as expected, the dilated Weil restriction of an affine special formal scheme is a model of the Weil restriction of the associated analytic space.

\begin{proposition}\label{proposition_semi-affinoid_restriction}
Let $K'|K$ be a finite extension of discretely valued fields, let $R'|R$ be the corresponding extension of valuation rings, and let $\X$ be an affine special formal $R'$-scheme.
Then  we have 
\[
\prod\nolimits_{K'|K} \big( \X^\beth \big)
 \cong 
\Big(\prod\nolimits_{R'|R}^\mathrm{dil}\X\Big)^\beth.
\]
\end{proposition}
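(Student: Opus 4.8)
The plan is to reduce the statement to a functorial identity between the points of the two sides with values in affinoid $K$-algebras, exactly as in the rigid/analytic proof of representability of Weil restrictions, and then match it term by term with the construction of the dilated Weil restriction. First I would write $\X = \Spf A'$ with $A' = R'\{\underline X\}[[\underline Y]]/I$ and recall that, by definition, $\bigl(\prod_{R'|R}^{\mathrm{dil}}\X\bigr)^\beth = \bigl(\Spf A_\lambda\bigr)^\beth$ for $\lambda \geq \lambda_0$, where $A_\lambda$ is the explicit special $R$-algebra from the definition. On the other side, $\prod_{K'|K}(\X^\beth)$ is the Weil restriction of the semi-affinoid $K'$-space $\X^\beth$, which by the discussion following Lemma~\ref{lemma_restriction_affine} (i.e.\ Bertapelle's \cite[Proposition~1.8]{Bertapelle00} together with the realization of $\X^\beth$ as an increasing union of affinoid domains $\calM(C_n)$) can be computed as a suitable increasing union. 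So the task is to identify $\bigl(\Spf A_\lambda\bigr)^\beth$ with $\prod_{K'|K}(\X^\beth)$.

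The key step is a compatibility check: the formation of $\X^\beth$ (Berthelot's generic fibre) commutes with Weil restriction in the appropriate sense, once one has dilated. Concretely, I would first treat the case where $\X$ is a closed polydisc, say $A' = R'\{\underline X\}$ with $I = 0$, where $\X^\beth$ is the closed unit $K'$-polydisc and $\prod_{K'|K}(\X^\beth)$ is, by equation~(\ref{equazione_restrizione_affinoide}), the affinoid space cut out by $|c_j(X_{\bullet,i})| \leq 1$ inside a polydisc of polyradius $|\pi|^{-\lambda}$; this is visibly $\bigl(\Spf A_\lambda\bigr)^\beth$ for $A_\lambda$ as in the definition with $I = 0$ and no $\underline Y$ variables. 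Then I would incorporate the open-disc variables $\underline Y$: here the point is that topological nilpotence of $\sigma'(Y)$ in $L \otimes_A A'$, analysed via the decomposition $L \otimes_A A' \cong \bigoplus_i L e_i$ as in the proof of Lemma~\ref{lemma_restriction_affine}, corresponds on the $\beth$-side to the condition $|Y_i(x)| < 1$ for the coordinates indexed by topologically nilpotent basis elements — that is exactly why the $Z_{\bullet, r+1}, \dots, Z_{\bullet, r+s}$ enter as $[[\cdot]]$-variables. Finally, the ideal $I$ is handled by passing to $I^{\mathfrak{c}}$: a $K'$-point of $\X^\beth$ satisfying the equations in $I$ corresponds, after writing each variable in the basis $e_0, \dots, e_{m-1}$, to a $K$-point satisfying all the coefficient equations, which is precisely the defining ideal of $A_\lambda$.

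Because both $\prod_{K'|K}(\X^\beth)$ and $\bigl(\prod_{R'|R}^{\mathrm{dil}}\X\bigr)^\beth$ are defined via their functors of points — the former on affinoid $K$-algebras, the latter (via Berthelot's construction as an increasing union of affinoids) also testable on affinoid $K$-algebras, using that $\X^\beth = \varinjlim \calM(C_n)$ and that Weil restriction commutes with the relevant colimits — it suffices to exhibit, functorially in a $K$-affinoid algebra $L$, a bijection $\Hom_{K'}(L \hat\otimes_K K', \calO_{\X^\beth}) \cong \Hom_K(L, C_{n,\lambda})$ where $C_{n,\lambda} = A_\lambda[I^n/\pi]^{\wedge} \otimes_R K$ are the affinoid pieces of $(\Spf A_\lambda)^\beth$. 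This bijection is the $\beth$-version of the one constructed in the proof of Lemma~\ref{lemma_restriction_affine}: send $\sigma'$ to the homomorphism determined by writing each $\sigma'(X_i)$ and $\sigma'(Y_j)$ in the basis $(e_i)$, noting that boundedness of $\sigma'$ on $\X^\beth$ translates into boundedness of the coordinate images together with the characteristic-polynomial conditions $|c_j(\cdot)| \leq 1$, which is exactly what the $|\pi|^\lambda$-dilation of the coordinates and the $Z$-variables encode. The main obstacle, and the step requiring genuine care rather than bookkeeping, is the last one: verifying that the sup-norm (equivalently, boundedness) conditions on $\sigma'$ over $\X^\beth$ match precisely the conditions defining the affinoid pieces $C_{n,\lambda}$ of $\bigl(\prod_{R'|R}^{\mathrm{dil}}\X\bigr)^\beth$ — uniformly in $n$ and for $\lambda$ large — rather than merely up to a bounded distortion; this is where one must invoke separability of $K'|K$ (hence compactness of the Weil restriction, giving the stabilization past $\lambda_0$) and the precise form of \cite[Proposition~1.8]{Bertapelle00} to see that no further dilation is needed and that the $Z_{\bullet,r+1},\dots$ really must be power-series variables and not convergent ones.
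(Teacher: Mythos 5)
Your overall strategy --- exhaust $\X^\beth$ by affinoid domains, reduce to Bertapelle's explicit formula~\eqref{equazione_restrizione_affinoide}, and match the result against the definition of the dilated Weil restriction --- is the same as the paper's, and your reduction to the affinoid pieces (compactness of the image of an affinoid, so that Weil restriction commutes with the increasing union) is the paper's first step, justified there via \cite[Theorem~1.13, Proposition~1.14]{Bertapelle00}. The problem is in your treatment of the power-series variables $\underline Y$, which is the only genuinely nontrivial case and where your argument contains an error rather than a gap. You claim that topological nilpotence of $\sigma'(Y)$ in $L\otimes_A A'$, analysed coordinate-wise as in the proof of Lemma~\ref{lemma_restriction_affine}, corresponds on the $\beth$-side to the condition $|Y_i(x)|<1$ for the coordinates indexed by topologically nilpotent basis elements. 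That coordinate-wise analysis computes the \emph{formal} Weil restriction $\prod_{R'|R}\X$, not the analytic one: the correct condition cutting out $\prod_{K'|K}\X^\beth$ is $|c_j(Y_\bullet)(x)|<1$ for the characteristic-polynomial coefficients, a condition on $\sum_i Y_i e_i$ as an element of a finite extension of $\rescompl{x}$ and not on the individual $Y_i$. These two conditions differ exactly when $K'|K$ is ramified --- this is the content of Example~\ref{esempio_disco_gtt} and Proposition~\ref{proposition_commutativity_unramified}, where one only gets an immersion $\big(\prod_{R'|R}\X\big)^\beth\hookrightarrow\prod_{K'|K}\X^\beth$ --- and the entire point of introducing $\prod^{\mathrm{dil}}_{R'|R}$ is to encode the characteristic-polynomial condition rather than the coordinate-wise one. (You also have the indexing reversed relative to Lemma~\ref{lemma_restriction_affine}: there the coordinates attached to topologically nilpotent basis elements are the ones on which \emph{no} condition is imposed.) Your third paragraph states the correct $c_j$-conditions, but then the mechanism you offer in the second paragraph cannot be the justification for them.

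There is a second, more technical gap that the paper has to address and your sketch does not: the affinoid pieces $U_n=\calM\big(K'\{\underline X,|\pi|^{-\sfrac{1}{n}}\underline Y\}/I\big)$ exhausting $\X^\beth$ are in general \emph{non-strict} $K'$-affinoids, so Bertapelle's \cite[Proposition~1.8]{Bertapelle00} does not apply to them as stated; the paper invokes Wahle's extension of that formula to non-strict affinoids to compute $\prod_{K'|K}U_n$ (obtaining conditions of the form $|c_j(Y_\bullet)|\leq|\pi|^{\sfrac{j}{n}}$ whose union over $n$ gives $|c_j(Y_\bullet)|<1$). If you instead want to build the functorial bijection on $\Hom$-sets directly, as your last paragraph proposes, then the ``main obstacle'' you identify is precisely the content of that non-strict computation, and it needs to be carried out, not only acknowledged.
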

\begin{proof}
Write $\X=\Spf\big(R'\{\underline X\}[[\underline Y]]/I\big)$, so that $\X^\beth$ is the increasing union of the affinoid domains $U_n=\calM\big(K'\{\underline X, |\pi|^{-\sfrac{1}{n}} \underline Y\}/I\big)$ for $n>0$.
Let $\frakU$ be the set of all strictly $K'$-affinoid domains of $\X^\beth$, let $Y$ be a strictly $K$-affinoid space, and let $\phi:Y_{K'} \to \X^\beth$ be a $K'$-analytic morphism.
We claim that the covering $\{\phi^{-1}(U) \,|\, U\in\frakU\}$ of $Y_{K'}$ can be refined to a covering of the form $\{V \times_K K' \,|\, V\in \frakV \}$ for some admissible covering $\frakV$ of $Y$. 
Indeed, since $\X^\beth$ is semi-affinoid, it is an increasing union of affinoid domains, and so $\phi(Y_{K'})$, being compact, is contained in an affinoid domain $W$ of $\X^\beth$.
The claim follows by applying \cite[Proposition 1.14]{Bertapelle00} to $W$, since $\prod\nolimits_{K'|K}W$ is representable. 
Now, by \cite[Theorem 1.13]{Bertapelle00}, we deduce that $\prod\nolimits_{K'|K}\X^\beth$ is obtained by gluing $\prod\nolimits_{K'|K}U$ for $U\in \frakU$, and therefore also by gluing the $\prod\nolimits_{K'|K}U_n$ along the immersions $\prod\nolimits_{K'|K}U_n \hookrightarrow \prod\nolimits_{K'|K}U_{n'}$ whenever $n'>n$.
When $\X^\beth$ is affinoid, that is when $\X$ is of finite type over $R'$, the proposition follows immediately from the description of the equation~\eqref{equazione_restrizione_affinoide}, therefore to prove the theorem it is enough to treat the case of the open disc.
So assume from now on that $\X=\Spf R'[[Y]]$, so that $U_n=\calM\big(K'\{|\pi|^{-\sfrac{1}{n}} Y\}\big)$ is a closed disc of radius $|\pi|^{1/n}$.
Weil restrictions of general $K$-affinoid spaces, including non-strict ones, behave exactly in the same way as in \ref{equazione_restrizione_affinoide}, see \cite{Wahle09}.
In particular, by Proposition 3.1.4 of \emph{loc. cit.}, we have 
\[
\resizebox{1\textwidth}{!}{$
	\prod\nolimits_{K'|K}U_n
	\cong 
	\bigcup_{\lambda>0}\calM\bigg( \frac{K\big\{|\pi|^\lambda Y_{0},\ldots,|\pi|^\lambda Y_{m-1}\big\}\big\{|\pi|^{-\sfrac{1}{n}} Z_{0}, \dots, |\pi|^{-\sfrac{(m-1)}{n}} Z_{m-1}\big\}}{\big(Z_{0}-c_0(Y_{\bullet}), \ldots, Z_{m-1}-c_{m-1}(Y_{\bullet})\big)} \bigg)
	$}
\]
as subspaces of $(\mathbb A^m_K)^\mathrm{an}$.
It follows that 
\begin{align*}
\prod\nolimits_{K'|K}\X^\beth & \cong \bigcup_n \prod\nolimits_{K'|K}U_n 
\\
& \cong 
	 \bigcup_{n,\lambda} 
	 \resizebox{.7675\textwidth}{!}{$\calM\bigg( \frac{K\big\{|\pi|^\lambda Y_{0},\ldots,|\pi|^\lambda Y_{m-1}\big\}\big\{|\pi|^{-\sfrac{1}{n}} Z_{0}, \dots, |\pi|^{-\sfrac{(m-1)}{n}} Z_{m-1}\big\}}{\big(Z_{0}-c_0(Y_{\bullet}), \ldots, Z_{m-1}-c_{m-1}(Y_{\bullet})\big)} \bigg)
	$}
\\
& \cong \bigcup_{\lambda} \Spf\bigg( \frac{R\big\{|\pi|^\lambda Y_{0},\ldots,|\pi|^\lambda Y_{m-1}\big\}\big[\big[Z_{0}, \dots, Z_{m-1}\big]\big]}{\big(Z_{0}-c_0(Y_{\bullet}), \ldots, Z_{m-1}-c_{m-1}(Y_{\bullet})\big)} \bigg).
\end{align*}
This is the subspace of $(\mathbb A^m_K)^\mathrm{an}$ defined by the inequalities $|c_j(Y_{\bullet})|<1$, which is precisely $\big(\prod\nolimits_{R'|R}^\mathrm{dil}\X\big)^\beth$ from the definition of the dilated Weil restriction.
\end{proof}

In the case of unramified extensions, the usual formal Weil restriction is sufficient to obtain a model of the Weil restriction of a semi-affinoid $K'$-analytic space, as the following proposition ensures.
Observe that in this case no element of a basis of $R'$ over $R$ is topologically nilpotent.

\begin{proposition}\label{proposition_commutativity_unramified}
Let $K'|K$ be a finite extension of discretely valued fields, let $R'|R$ be the corresponding extension of valuation rings, and let $\X$ be an affine special formal $R'$-scheme.
Then we have an immersion 
\[
\Big (\prod\nolimits_{R'|R}\X \Big)^\beth \longhookrightarrow \prod\nolimits_{K'|K}\X^\beth,
\]
which is an isomorphism if $K'$ is unramified over $K$.
\end{proposition}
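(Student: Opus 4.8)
The plan is to write out both sides explicitly, identify the morphism, and observe that it is an immersion in general while becoming an isomorphism exactly when the discriminant of $R'|R$ is a unit, i.e. in the unramified case.

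First I record the two sides. Write $\X=\Spf\big(R'\{\underline X\}[[\underline Y]]/I\big)$ with $\underline X$ an $r$-tuple and $\underline Y$ an $s$-tuple of variables. By Lemma~\ref{lemma_restriction_affine}, $\big(\prod_{R'|R}\X\big)^\beth$ is the locus inside an analytic polydisc in the $mr+ms$ coordinates $\underline X_\bullet,\underline Y_\bullet$ cut out by $I^{\mathfrak c}$ together with the conditions $|X_{i,j}|\le1$ for all $i,j$, $|Y_{i,j}|\le1$ for $i\le d$, and $|Y_{i,j}|<1$ for $i>d$. By Proposition~\ref{proposition_semi-affinoid_restriction} and the definition of the dilated Weil restriction, $\prod_{K'|K}\X^\beth\cong\big(\prod\nolimits_{R'|R}^{\mathrm{dil}}\X\big)^\beth$ is the locus inside a (larger) analytic polydisc in the same coordinates cut out by $I^{\mathfrak c}$ together with the conditions $|c_\ell(X_{\bullet,j})|\le1$ and $|c_\ell(Y_{\bullet,j})|<1$ for all $\ell\in\{0,\dots,m-1\}$ and all $j$, where $c_\ell\in K[x_0,\dots,x_{m-1}]$ is the coefficient of degree $\ell$ of the characteristic polynomial of multiplication by $\sum_ix_ie_i$. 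Since the ideal $I^{\mathfrak c}$ appears on both sides, cutting by this common closed condition reduces us to the case $I=0$, i.e. $\X$ a product of unit discs over $R'$; I take as the morphism of the statement the one which on coordinates is the identity $X_{i,j}\mapsto X_{i,j}$, $Y_{i,j}\mapsto Y_{i,j}$ (equivalently, $(\cdot)^\beth$ applied to the counit $\big(\prod_{R'|R}\X\big)\times_RR'\to\X$ followed by the universal property of $\prod_{K'|K}\X^\beth$).

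That this is a well-defined immersion amounts to checking that the size conditions on the left imply those on the right. For the $\underline X$-coordinates this holds because $c_\ell$ actually lies in $R[x_0,\dots,x_{m-1}]$: the matrix of multiplication by $\sum_ix_ie_i$ in the basis $e_0,\dots,e_{m-1}$ has entries in $R[x_\bullet]$ since $e_ie_j\in\bigoplus_kRe_k$, hence so does its characteristic polynomial, and therefore $|X_{i,j}|\le1$ for all $i$ forces $|c_\ell(X_{\bullet,j})|\le1$ for all $\ell$. For the $\underline Y$-coordinates, the conditions imposed on the left are, by their very derivation in Lemma~\ref{lemma_restriction_affine}, exactly those making $\sum_iY_{i,j}e_i$ topologically nilpotent; then each $c_\ell(Y_{\bullet,j})$ with $\ell<m$, being up to sign an elementary symmetric function of positive degree in the conjugates of $\sum_iY_{i,j}e_i$, is topologically nilpotent as well, so $|c_\ell(Y_{\bullet,j})|<1$. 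This shows the morphism is well-defined and identifies $\big(\prod_{R'|R}\X\big)^\beth$ with an analytic subdomain of $\prod_{K'|K}\X^\beth$, that is an immersion.

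It remains to prove the reverse inclusion when $K'|K$ is unramified, namely that $|c_\ell(x_\bullet)|\le1$ for all $\ell$ forces $|x_i|\le1$ for all $i$ (and similarly with all inequalities strict). Passing to a complete valued field $L$ splitting $K'|K$, with valuation ring $R_L$, the algebra $K'\otimes_KL$ becomes a product of copies of $L$ and one can speak of the $m$ conjugates $\alpha^{(\ell)}=\sum_ix_ie_i^{(\ell)}$ of $\alpha=\sum_ix_ie_i$; the characteristic polynomial is $\prod_\ell\big(t-\alpha^{(\ell)}\big)$, so $|c_\ell(x_\bullet)|\le1$ for all $\ell$ is equivalent to $|\alpha^{(\ell)}|\le1$ for all $\ell$ (a monic polynomial has power-bounded roots if and only if it has power-bounded coefficients). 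Now the matrix $\big(e_i^{(\ell)}\big)_{\ell,i}$ has entries in $R_L$ and its determinant is a unit of $R_L$, since its square is the discriminant of the basis $e_0,\dots,e_{m-1}$, which generates the discriminant ideal of $R'|R$, the unit ideal precisely because $K'|K$ is unramified. Hence the inverse matrix also has entries in $R_L$, and expressing $(x_i)$ as its product with $(\alpha^{(\ell)})$ gives $|x_i|\le\max_\ell|\alpha^{(\ell)}|\le1$; the same computation with $<1$ handles the open-disc coordinates, so the two loci coincide. The only genuinely non-formal point is this last one — an $R$-basis of $R'$ stays a basis integrally after splitting exactly when the discriminant of $R'|R$ is a unit — and its failure in the ramified case is precisely what produces the strict containment exhibited in Example~\ref{esempio_disco_gtt}.
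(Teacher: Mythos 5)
Your argument is correct, but it takes a genuinely different route from the paper's. The paper disposes of the proposition in two lines: when $\X$ is topologically of finite type the statement is quoted from Nicaise--Sebag, and the general case follows by writing $\X^\beth$ and the two restrictions as increasing unions of affinoid domains. You instead give an explicit coordinate computation: both sides are realized as subloci of the same analytic affine space cut out by $I^{\mathfrak c}$, the left-hand side by the conditions $|T_{i,j}|\le1$ (resp.\ $<1$) coming from Lemma~\ref{lemma_restriction_affine}, the right-hand side by $|c_\ell(T_{\bullet,j})|\le1$ (resp.\ $<1$) coming from Proposition~\ref{proposition_semi-affinoid_restriction}; the containment then follows from the integrality of the $c_\ell$ and the stability of topological nilpotence under taking conjugates, and the reverse containment in the unramified case from the invertibility over the integers of a splitting field of the matrix of conjugates $\big(e_i^{(\ell)}\big)$, whose determinant squares to the discriminant of $R'|R$. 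What your approach buys is a self-contained proof that pinpoints exactly where surjectivity fails in the ramified case (the discriminant ceases to be a unit, consistent with Example~\ref{esempio_disco_gtt}); the cost is that the identification of the target goes through Proposition~\ref{proposition_semi-affinoid_restriction} --- legitimate here, since that result precedes this one and does not use it, but it makes the statement logically dependent on the dilated-restriction machinery, whereas the paper's version is independent of it. Two steps are asserted a bit quickly, though neither is a gap: that every conjugate of a topologically nilpotent element of $\rescompl{x}\otimes_KK'$ has absolute value $<1$ needs a word (e.g.\ the spectral radius of the multiplication matrix, or the decomposition of $\rescompl{x}\otimes_KK'$ into local factors), and the equivalence ``all roots of a monic polynomial have absolute value $<1$ iff all non-leading coefficients do'' should be recorded explicitly (elementary symmetric functions in one direction, the estimate $|\alpha|^m=\big|\sum_\ell c_\ell\alpha^\ell\big|<|\alpha|^m$ for a putative root with $|\alpha|\ge1$ in the other).
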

\begin{proof}
When $\X$ is topologically of finite type, this result is proved in \cite[Prop. 2.5 (5)]{NicaiseSebag2008}. 
The general case then follows then from the description of semi-affinoid spaces as increasing unions of affinoid domains.
\end{proof}

In Example~\ref{esempio_disco_gtt} we computed explicitly the Weil restriction of a closed disc. 
Now we generalize that description to the restriction of any semi-affinoid space with respect to a tamely ramified extension.

\begin{proposition}\label{proposition_dilated_restriction_explicit}
Let $K'|K$ be a totally tamely ramified degree $m$ extension of discretely valued fields, let $R'|R$ be the corresponding extension of valuation rings, and let $A'=R'\{\underline X\}[[\underline Y]]/I$ be a special $R'$-algebra.
Then we have 
\[
\resizebox{1\textwidth}{!}{$
\Big(\prod\nolimits_{R'|R}^\mathrm{dil}\Spf A' \Big)^\beth
 \, \cong \, 
\bigg( \Spf \frac{R\{\underline X_0,|\pi|^{\frac{1}{m}} \underline X_1 \ldots, \vert\pi\vert^{\frac{m-1}{m}} \underline X_{m-1}\}[[\underline Y_0, |\pi|^{\frac{1}{m}} \underline Y_{1},\ldots,|\pi|^{\frac{m-1}{m}}\underline Y_{m-1}]]}{I^{\mathfrak{c}}} \bigg)^\beth.
$}
\]
\end{proposition}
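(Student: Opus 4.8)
The plan is to reduce everything to the explicit description of the dilated Weil restriction and then to linearize the characteristic polynomial coefficients using the special shape of a totally tamely ramified extension. First I would fix a uniformizer $\varpi$ of $R'$ with $\varpi^m = u\pi$ for some unit $u\in R^\times$, so that $\{1,\varpi,\ldots,\varpi^{m-1}\}$ is simultaneously an $R$-basis of $R'$ and a $K$-basis of $K'$; I take this as the basis $e_0,\ldots,e_{m-1}$ throughout. The key computation is then the characteristic polynomial of multiplication by $x_0e_0+\cdots+x_{m-1}e_{m-1} = x_0 + x_1\varpi + \cdots + x_{m-1}\varpi^{m-1}$ on $K'[x_0,\ldots,x_{m-1}]$: writing $t = \varpi$, multiplication by $\sum x_i t^i$ on $K[t]/(t^m - u\pi)$ has a companion-type matrix, and its characteristic polynomial is visibly homogeneous once one assigns weight $j/m$ (in powers of $|\pi|$) to the variable $x_j$. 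Concretely, the degree-$j$ coefficient $c_j$ of that characteristic polynomial is a sum of monomials $\prod x_{i}^{a_i}$ for which $\sum i\,a_i \equiv$ (the appropriate residue) and which, after collecting the powers of $u\pi$ coming from reductions $t^m = u\pi$, carry exactly the power of $\pi$ predicted by the weighting $\deg(x_i) = i/m$. I would record this as the precise statement: $c_j\big(\underline X_0, |\pi|^{1/m}\underline X_1,\ldots,|\pi|^{(m-1)/m}\underline X_{m-1}\big)$ lies in $R\{\underline X_0,|\pi|^{1/m}\underline X_1,\ldots\}$ and has spectral norm $\leq 1$ precisely when the $\underline X_j$ range over the indicated scaled polydisc; indeed up to a unit it equals $\pi^{j}$ times a polynomial in the rescaled variables.

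Granting that, the argument runs as follows. By the definition of the dilated Weil restriction and Proposition~\ref{proposition_semi-affinoid_restriction}, $\big(\prod_{R'|R}^\mathrm{dil}\Spf A'\big)^\beth$ is the subspace of $(\mathbb A^{(r+s)m}_K)^\mathrm{an}$ cut out by the coefficient ideal $I^{\mathfrak c}$ together with the inequalities $|c_j(X_{\bullet,\ell})|<1$ for the $\underline Y$-variables and $|c_j(X_{\bullet,\ell})|\leq 1$ for the $\underline X$-variables (here I am using the final description in Section~2 of $\X^\beth$ as cut out by the $|c_j|\le 1$/$<1$ conditions, valid after passing to a large enough $\lambda$). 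On the other side, the $\beth$-space of $\Spf\big(R\{\underline X_0,|\pi|^{1/m}\underline X_1,\ldots\}[[\underline Y_0,|\pi|^{1/m}\underline Y_1,\ldots]]/I^{\mathfrak c}\big)$ is, by the construction of $\beth$ together with Notation~\eqref{notation_fractional_special_algebras} and the identity $R\{|\pi|^{a/b}X\}\otimes_RK \cong K\{|\pi|^{a/b}X\}$, exactly the subspace of the same analytic affine space cut out by $I^{\mathfrak c}$ together with $|X_{j,\ell}|\leq |\pi|^{-j/m}$ for the $\underline X$-variables and $|Y_{j,\ell}| < |\pi|^{-j/m}$ for the $\underline Y$-variables. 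So the proposition reduces to the purely local claim that, for fixed $\ell$, the conditions $\{|c_j(X_{0,\ell},\ldots,X_{m-1,\ell})|\leq 1 : 0\le j\le m-1\}$ and $\{|X_{j,\ell}|\leq |\pi|^{-j/m} : 0\le j\le m-1\}$ define the same set of points — and likewise with all $\le$ replaced by $<$, which handles the $\underline Y$-variables. This is the same elementary back-and-forth that appears in Example~\ref{esempio_disco_gtt}: one inclusion is immediate from the homogeneity recorded above, and the reverse inclusion follows by a case analysis on whether the dominant term of $\sum X_{i,\ell}\varpi^i$ (measured after rescaling) is unique, exactly as in the computation $|X_0^2 - pX_1^2| = \max\{|X_0|^2, |p||X_1|^2\}$ there.

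**Main obstacle.** The only real work is establishing the precise weighted-homogeneity and, more delicately, the sharp two-sided estimate relating $\max_j |X_j|\,|\pi|^{j/m}$ to $\max_j |c_j(X_\bullet)|$; the inequality $\max_j|c_j(X_\bullet)| \le (\max_j |X_j||\pi|^{j/m})^{?}$ in one direction is formal, but the reverse bound — recovering control of each individual $|X_j|$ from the symmetric functions $c_j$ — is where one uses that the extension is \emph{tamely} ramified and \emph{totally} ramified, so that the minimal polynomial of $\varpi$ is the Eisenstein polynomial $t^m - u\pi$ with $\gcd(m,\operatorname{char} k)=1$; the tameness guarantees that no unexpected cancellation of $\pi$-powers occurs among the $c_j$ (equivalently, the relevant Newton polygon is a single segment of slope $1/m$). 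Once that local lemma is in hand, gluing over $\lambda$ and over the affinoid exhaustion, together with compactness of the Weil restriction of a strictly affinoid space (which makes the union stabilize), finishes the proof with no further difficulty. I would present the local lemma as a short separate computation, then assemble the global statement in one paragraph.
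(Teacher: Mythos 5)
Your strategy is sound and reaches the right conclusion, but it takes a genuinely different route from the paper at the one step that matters. Both arguments reduce, via the definition of the dilated Weil restriction, to showing that the locus $\{|c_j(X_{0,\ell},\dots,X_{m-1,\ell})|\leq 1 \text{ for all } j\}$ coincides with the polydisc $\{|X_{j,\ell}|\leq|\pi|^{-j/m}\}$ (and likewise with strict inequalities for the $\underline Y$-variables). You attack this by a direct two-sided norm estimate on the $c_j$; the paper instead observes that the substitution $y_i=\varpi^i x_i$ is a similarity transformation (conjugation by $\mathrm{diag}(1,\varpi,\dots,\varpi^{m-1})$) turning the multiplication matrix for $K'=K[t]/(t^m-u\pi)$ into the circulant matrix for the \emph{unramified} algebra $K[\alpha]/(\alpha^m-1)$, so that $c_j(x_\bullet)=\tilde c_j(y_\bullet)$ with $\tilde c_j$ the coefficients of the unramified case, and then simply quotes Proposition~\ref{proposition_commutativity_unramified} to conclude that $|\tilde c_j(y)|\leq 1$ for all $j$ if and only if $|y_i|\leq 1$ for all $i$. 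This trick makes the ``hard'' direction disappear entirely; tameness enters only through the \'etaleness of $K[\alpha]/(\alpha^m-1)$. Two points in your write-up need care. First, the parenthetical claim that $c_j$ ``up to a unit equals $\pi^{j}$ times a polynomial in the rescaled variables'' is false as stated (e.g.\ $c_{m-1}=-mX_0$); the correct assertion is the monomial-by-monomial one, namely that the coefficient of $\prod X_i^{a_i}$ in $c_j$ has $\pi$-adic order at least $\frac{1}{m}\sum i a_i$, which is all you need for the easy inclusion. Second, the reverse inclusion cannot be done by the literal ``dominant term is unique or not'' dichotomy of Example~\ref{esempio_disco_gtt} at arbitrary points: over a point whose value group already contains $|\pi|^{1/m}$ the quantities $|X_i|\,|\pi|^{i/m}$ need not be pairwise distinct, and genuine cancellation in $\sum X_i\varpi^i$ does occur. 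The robust version of your Newton-polygon remark is: since the $c_j$ are the elementary symmetric functions of the conjugates $\xi_\zeta=\sum_i X_i\zeta^i\varpi^i$, the bounds $|c_j|\leq 1$ force $|\xi_\zeta|\leq 1$ for every $m$-th root of unity $\zeta$, and then $mX_i\varpi^i=\sum_\zeta\zeta^{-i}\xi_\zeta$ together with $|m|=1$ (tameness) recovers $|X_i|\leq|\pi|^{-i/m}$. With that lemma in place your assembly of the global statement is fine, and arguably more self-contained than the paper's, which leans on the unramified case via \cite{NicaiseSebag2008}.
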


\begin{proof}
We prove the result in the case of the open disc, that is $A'=R'[[Y]]$, the general case being completely analogous.
Let $\varpi$ be a uniformizer of $R'$ such $\varpi^m$ is a uniformizer of $R$. 
Then $\{1, \varpi, \ldots, \varpi^{m-1}\}$ is a basis of $R'$ over $R$.
To compute the coefficients $c_j$ appearing in the expression \ref{equazione_restrizione_affinoide} we consider the matrix of the multiplication by $\sum_{i=0}^{m-1} x_i\varpi^{i-1}$ in $K'[x_0, \dots, x_{m-1}]$, which is 
\[
M= \left (\begin{matrix} x_0 & \varpi^m x_{m-1} & \ldots & \varpi^m x_1 \\ x_1 & x_0 & \ldots & \varpi^m x_2 \\ \vdots & \vdots &\ddots & \vdots \\ x_{m-1} & x_{m-2} & \ldots & x_0\end{matrix} \right ).
\]
The $c_j$ correspond then to the sums of the ($m-j$)-th principal minors of $M$. 
Now, for every $i \in \{0, \ldots, m-1\}$, set $y_i = \varpi^i x_i$. 
With this substitution, the matrix becomes 
\[
\left (\begin{matrix} y_0 & \varpi y_{m-1} & \ldots & \varpi^{m-1} y_1 \\ \frac{y_1}{\varpi} & y_0 & \ldots & \varpi^{m-2} y_2 \\ \vdots & \vdots &\ddots & \vdots \\ \frac{y_{m-1}}{\varpi^{m-1}} & \frac{y_{m-2}}{\varpi^{m-2}} & \ldots & y_0\end{matrix} \right ) \sim \left (\begin{matrix} y_0 & y_{m-1} & \ldots & y_1 \\ y_1 & y_0 & \ldots & y_2 \\ \vdots & \vdots &\ddots & \vdots \\ y_{m-1} & y_{m-2} & \ldots & y_0\end{matrix} \right ),
\]
the equivalent matrix on the right being obtained by multiplying the $i$-th row by $\varpi^i$ and the $j$-th column by $\varpi^{-j}$. 
Observe that the $c_j$ are invariant under equivalence, and the matrix obtained on the right is the matrix associated with the multiplication by $\sum_{i=0}^{m-1} y_i\alpha^i $ in $K(\alpha)[y_0, \ldots, y_{m-1}]$, where $\alpha^m=1$. 
Since the extension $K(\alpha)$ is unramified over $K$, the inequalities $|c_j(y_0, \ldots, y_{m-1})|< 1$ define an open unit disc. 
Therefore, over $K$ the inequalities $|c_j(x_0, \ldots, x_{m-1})|< 1$ define a polydisc of polyradius $\{1, {|\varpi|^{-1}}, \dots, {|\varpi|^{1-m}}\}$, which implies the result we want from the definition of the dilated Weil restriction.
\end{proof}

More generally, since any tamely ramified extension decomposes as a totally tamely ramified extension of an unramified extension, one can combine Proposition~\ref{proposition_commutativity_unramified} and Proposition~\ref{proposition_dilated_restriction_explicit} to compute in two steps the Weil restriction of a semi-affinoid space with respect to such an extension.
A computation of this kind is performed in Section~\ref{section_forms_annuli} to study tame forms of annuli.



\section{Fixed loci}
\label{section_fixed_locus}

We now move to the study of Galois-fixed loci for semi-affinoid analytic spaces and their models.
This is the second ingredient that we need to study forms of semi-affinoid $K'$-analytic spaces via their models.
\medskip

Let $\calC$ be a category, fix an object $\s$ of $\calC$, and let $\X \to \s$ be a morphism in $\calC$.
Let $G$ be a finite group acting equivariantly on $\X\to \s$, with the trivial action on $\s$.
Then the \emph{$G$-fixed locus} $\X^G$ of $\X$ is defined as the functor
\begin{alignat*}{2}
\X^G \colon && \calC_\s & \longrightarrow  (Sets) \\
					  && (\T\to \s) & \longmapsto  \big(\Hom_{\s}(\T,\X)\big)^G,
\end{alignat*}
where $\big(\Hom_{\s}(\T,\X)\big)^G$ is the subset of $\Hom_{\s}(\T,\X)$ consisting of those $\s$-morphisms $f$ such that $g\circ f=f$ for every $g$ in $G$.
When this functor is representable we also denote by $\X^G$ the object of $\calC_\s$ which represents it, and call it the \emph{$G$-fixed locus} of $\X$.

Observe that the $G$-fixed locus is compatible with base change: if $G$ acts trivially on a morphism $\T\to \s$ we have
\[
(\X\times_\s\T)^G\cong\X^G\times_\s\T.
\]
In this paper we only consider fixed loci of affine special formal schemes and semi-affinoid analytic spaces.
We start by discussing the case of special formal $R$-schemes as it is slightly simpler: if $\X$ is a separated formal $R$-scheme then $\X^G$ is a closed formal subscheme of $\X$.
Indeed, write $\X=\varinjlim X_n$, then $G$ acts on each of the $X_n$ and they are all separated, so by \cite[Proposition 3.1]{Edixhoven92} the $G$-fixed locus $(X_n)^G$ of $X_n$ is represented by a closed subscheme of $X_n$.
Then $\X^G$ is the closed formal subscheme $\varinjlim (X_n)^G$ of $\X$.

The fixed loci of affine formal $R$-schemes can be simply described as follows. 
If $G$ is a finite group acting continuously on a commutative ring $A$, we define the \emph{ring of $G$-coinvariants} $A_G$ of $A$ as the quotient of $A$ by the ideal generated by the set $\{a-g(a)\,|\,a\in A, g\in G\}$.
Observe that if $A$ is a special $R$-algebra then $A_G$, being an adic quotient of $A$, is itself a special $R$-algebra.

\begin{lemma}\label{lemma_representability_G-fix}
Let $\X=\Spf A$ be an affine formal $R$-scheme and let $G$ be a finite group acting continuously on $A$.
Then
\[
(\Spf A)^G \cong \Spf A_G.
\]
\end{lemma}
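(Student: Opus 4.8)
The plan is to verify the claimed isomorphism by checking it on the level of functors of points, using the fact that formal schemes are determined by their points with values in affine formal $R$-schemes, i.e.\ in adic $R$-algebras. So let $L$ be an adic $R$-algebra; I must produce a bijection
\[
\big(\Hom_{R}(\Spf L,\Spf A)\big)^G \;\cong\; \Hom_{R}(\Spf L,\Spf A_G),
\]
functorial in $L$, where the $\Hom$-sets are sets of morphisms of formal $R$-schemes, equivalently sets of continuous $R$-algebra homomorphisms in the opposite direction. Unwinding, the right-hand side is the set of continuous $R$-homomorphisms $\psi\colon A_G\to L$, which by the universal property of the quotient $A_G = A/(a-g(a)\mid a\in A,\,g\in G)$ is in natural bijection with the set of continuous $R$-homomorphisms $\varphi\colon A\to L$ that kill every $a-g(a)$, that is, those $\varphi$ satisfying $\varphi(a)=\varphi(g(a))$ for all $a\in A$ and all $g\in G$, equivalently $\varphi\circ g=\varphi$ for all $g\in G$.

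On the other side, a morphism of formal $R$-schemes $f\colon\Spf L\to\Spf A$ corresponds to a continuous $R$-homomorphism $\varphi=f^\sharp\colon A\to L$, and the $G$-action on $\Spf A$ corresponds to the $G$-action on $A$; the condition that $f$ lies in the $G$-fixed locus, namely $g\circ f=f$ for all $g\in G$, translates under this anti-equivalence into $\varphi\circ g=\varphi$ for all $g\in G$ — precisely the same condition as above. Thus both functors on the category of adic $R$-algebras are identified with $L\mapsto\{\varphi\in\Hom^{\mathrm{cont}}_R(A,L)\mid \varphi\circ g=\varphi\ \forall g\in G\}$, compatibly with $L$, which gives the desired natural isomorphism. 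One small thing to note is that $A_G$ is indeed a special $R$-algebra (remarked just before the statement, since it is an adic quotient of $A$), so $\Spf A_G$ makes sense as an affine formal $R$-scheme; and the continuity of the induced $\psi\colon A_G\to L$ is automatic from that of $\varphi$ since $A_G$ carries the quotient topology.

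I do not expect a genuine obstacle here: the only point requiring any care is being precise about the identification of morphisms of formal schemes with continuous algebra homomorphisms and checking that the $G$-equivariance conditions match up under the contravariance, together with the bookkeeping that everything is functorial in the test object $L$ — but this is routine. The proof is therefore essentially the observation that the representing object for the $G$-fixed-locus functor is obtained by dualizing the coinvariants construction, exactly as in the classical (non-formal) affine case of \cite[Proposition~3.1]{Edixhoven92}.
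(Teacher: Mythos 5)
Your argument is correct and is essentially the paper's own proof: both test the two functors against an arbitrary adic $R$-algebra $L$ and identify $\Hom_R^{\mathrm{cont}}(A_G,L)$ with the $G$-invariant continuous homomorphisms $A\to L$ via the universal property of the quotient, checking functoriality in $L$. The only cosmetic difference is that you phrase the $G$-fixed condition geometrically ($g\circ f=f$) before dualizing, while the paper works directly with homomorphisms of algebras; the content is the same.
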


\begin{proof}
If $L$ is an adic $R$-algebra, the quotient map $A\to A_G$ defines an injection between the sets of continuous $R$-homomorphisms
$
\Phi \colon \Hom_{R}(A_G,L)\hookrightarrow\Hom_R(A,L)
$.
A homomorphism $\sigma\colon A\to L$ belongs to the image of $\Phi$ if and only if $\sigma(a)=\sigma\big(g(a)\big)$ for every $a$ in $A$ and every $g$ in $G$, that is if and only if $\sigma$ belongs to $\big(\Hom_R(A,L)\big)^G$.
Therefore $\Phi$ induces a bijection $\Hom_{R}(A_G,L)\stackrel{\sim}{\to}\big(\Hom_R(A,L)\big)^G$, and since this bijection is functorial in $L$ this proves the lemma.
\end{proof}

We can now move to the study of $G$-fixed loci in the category of $K$-analytic spaces.
Let $\calA$ be a $K$-affinoid algebra and let $G$ act on $\calM(\calA)$, trivially on $K$.
Then $G$ acts on $\calA$, the ring of coinvariants $\calA_G$ is a $K$-affinoid algebra, and the same argument given in the case of special formal schemes proves that the $G$-fixed locus of $\calM(\calA)$ is the affinoid $K$-analytic space $\calM(\calA_G)$.
Now, if $\X=\Spf A$ is an affine special formal $R$-scheme and $G$ is a finite group acting on $\X$, with the trivial action on $R$, we obtain by functoriality an action of $G$ on $\X^\beth$, trivial on $K$.

\begin{remark}
This action can be constructed explicitly as follows.
Denote by $I$ the largest ideal of definition of $A$ and, as in the construction of $\X^\beth$ in Section \ref{section_semi-affinoids}, write $A_n=A\left[I^n/\pi\right]$, let $B_n$ be the $I$-adic completion of $A_n$, and set $C_n=B_n\otimes_RK$ for every $n>0$, so that $\X^\beth=\bigcup_n\calM(C_n)$.
If $g$ is an element of $G$, then $g$ acts on $A$ as a continuous ring morphism $g\colon A\to A$ which is invertible and whose inverse is continuous.
Therefore we have $g(I)=I$, since $g^{-1}(I)\subset I$ by continuity of $g$ and $g(I)\subset I$ by continuity of $g^{-1}$.
It follows that for every $n>0$ the element $g$ induces an action on $A_n$, hence actions on $B_n$ and $C_n$.
Since the $C_n\to C_{n+1}$ are equivariant, then $g$ induces the wanted action on $\X^\beth$. 
See Lemma~\ref{lemma_computation_G-fix_Weil_basechange} for an example of a computation of such an action.
\end{remark}

We can now describe the $G$-fixed locus of the semi-affinoid space $\X^\beth$.

\begin{proposition}\label{proposition_beth_commutes_G-fix}
Let $\X$ be an affine special formal $R$-scheme and let $G$ be a finite group acting on $\X$, with trivial action on $R$. 
Then we have
\[
\big( \X^\beth \big)^G \cong \big(\X^G\big)^\beth.
\]
\end{proposition}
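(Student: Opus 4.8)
The plan is to reduce the statement to the two ingredients established just above: the description of the $G$-fixed locus of an affine special formal scheme as the coinvariant quotient (Lemma~\ref{lemma_representability_G-fix}) and the corresponding statement for affinoid algebras, together with the explicit construction of the $G$-action on $\X^\beth$ recalled in the previous remark. Write $\X=\Spf A$ with $A$ a special $R$-algebra, let $I$ be the largest ideal of definition of $A$, and for $n>0$ set $A_n=A[I^n/\pi]$, let $B_n$ be the $I$-adic completion of $A_n$, and $C_n=B_n\otimes_RK$, so that $\X^\beth=\bigcup_n\calM(C_n)$ and each $\calM(C_n)\hookrightarrow\calM(C_{n+1})$ is an affinoid-domain embedding. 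Since $G$ acts on $A$ by continuous automorphisms fixing $I$, it acts compatibly on all of $A_n$, $B_n$, $C_n$, and the transition maps are equivariant.

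First I would compute $\X^G=(\Spf A)^G$: by Lemma~\ref{lemma_representability_G-fix} this is $\Spf A_G$, where $A_G$ is the quotient of $A$ by the ideal $\mathfrak a$ generated by $\{a-g(a)\mid a\in A,\ g\in G\}$. Then $(\X^G)^\beth=(\Spf A_G)^\beth$, which by the intrinsic construction is $\bigcup_n\calM\big((A_G)_n\otimes_RK\big)$, where $(A_G)_n=(A_G)[\bar I^n/\pi]$ and $\bar I$ is the image of $I$ in $A_G$ (one checks $\bar I$ is the largest ideal of definition of $A_G$, being the image of $I$ and $A_G$ an adic quotient). On the other side, the $G$-action on $\X^\beth=\bigcup_n\calM(C_n)$ has fixed locus the increasing union of the fixed loci $\calM(C_n)^G=\calM\big((C_n)_G\big)$, using the affinoid case of the coinvariant description; here $(C_n)_G$ is $C_n$ modulo the ideal generated by $\{c-g(c)\mid c\in C_n,\ g\in G\}$. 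So the whole statement comes down to a natural isomorphism of $K$-affinoid algebras $(C_n)_G\cong (A_G)_n\otimes_RK$, compatibly in $n$.

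To produce that isomorphism I would argue that both sides are obtained from $A$ by the same two operations — passing to $G$-coinvariants and inverting an appropriate neighborhood of $0$ of $\pi$ — in either order, and that these two operations commute. Concretely: $C_n=(A[I^n/\pi])^\wedge\otimes_RK$, and since forming coinvariants is a quotient it commutes with the localization-type operation $A\rightsquigarrow A[I^n/\pi]$ and with $I$-adic completion (the completion of a quotient is the quotient of the completion by the closure of the ideal, and for special $R$-algebras these ideals are closed) and with $-\otimes_RK$; conversely $(A_G)_n\otimes_RK=(A_G[\bar I^n/\pi])^\wedge\otimes_RK$, and $A_G[\bar I^n/\pi]$ is exactly $(A[I^n/\pi])_G$ because the image of $I$ in $A_G$ is $\bar I$ and the $G$-coinvariants ideal is generated by the same elements $a-g(a)$. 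Chasing this through gives the desired equality of affinoid algebras, and one then checks the isomorphisms are compatible with the transition maps $C_n\to C_{n+1}$, so that the two exhausting unions of affinoid domains agree, yielding $(\X^\beth)^G\cong(\X^G)^\beth$ as $K$-analytic spaces.

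The main obstacle I anticipate is the bookkeeping around completions: one must be careful that the coinvariants ideal (in $A$, and in $A[I^n/\pi]$) is closed, so that completing the quotient agrees with quotienting the completion, and that $G$ genuinely acts by \emph{continuous} automorphisms on each $A[I^n/\pi]$ and its completion (this uses $g(I)=I$, already noted in the remark, to see $g$ preserves the subring $A[I^n/\pi]\subset A\otimes_RK$). Everything else is a routine, if slightly tedious, verification that a quotient commutes with the Berthelot-type generic-fiber construction; since the excerpt explicitly authorizes citing the affinoid coinvariant computation and Lemma~\ref{lemma_representability_G-fix}, the argument is short modulo these checks.
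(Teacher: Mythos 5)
Your proposal is correct and follows essentially the same route as the paper: both reduce to the affinoid exhaustion $\X^\beth=\bigcup_n\calM(C_n)$ and then verify, level by level, that the natural surjection $A_n=A[I^n/\pi]\to A_G[I_G^n/\pi]$ has kernel exactly the coinvariant ideal of $A_n$, so that $(C_n)_G\cong C'_n$. The paper carries out this kernel identification directly where you phrase it as commutation of coinvariants with the dilatation, completion, and $\otimes_RK$ steps, but the substance is identical.
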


\begin{proof}
Let $\rho\colon A \to A_G$ be the quotient map and set $I_G=\rho(I)$.
Then $A_G$ is a special $R$-algebra with ideal of definition $I_G$.
For any $n>0$, write $A'_n=A_G\left[(I_G)^n/\pi\right]$, let $B'_n$ be the $I_G$-adic completion of $A'_n$, and set $C'_n=B'_n\otimes_RK$, so that $(\Spf A_G)^\beth=\bigcup_n\calM(C'_n)$.
Since $\big(\X^\beth\big)^G=\bigcup_n\calM(C_n)^G=\bigcup_n\calM\big((C_n)_G\big)$, to prove the proposition it is now enough to show that for every $n>0$ the map $\rho$ induces an isomorphism $(C_n)_G\cong C'_n$.
In order to prove that, if we call $\alpha_n\colon A_n\to A'_n$ the natural surjection defined by sending an element of the form $a/\pi$, $a\in A$, to $\alpha(a/\pi)=\rho(a)/\pi$, it is sufficient to show that the kernel $\ker(\alpha_n)$ of $\alpha_n$ coincides with the ideal $J$ of $A_n$ generated by the elements of the form $x-g(x)$ for some $x$ in $A_n$ and some $\sigma$ in $G$.
If $a/\pi$ is an element of $\ker(\alpha_n)$ then $0=\alpha_n(a\pi)=\rho(a)/\pi$, so that $\rho(a)=0$ and therefore $a/\pi\in J$.
Conversely, consider an element of $A_n$ of the form $x=a/\pi-\sigma(a/\pi)=a/\pi-\sigma(a)/\pi$.
Then we have $\alpha_n(x)=\rho\big(a-\sigma(a)\big)/\pi=0$.
This shows that $J\subset \ker(\alpha_n)$, concluding the proof.
\end{proof}

\begin{remark}\label{remark_extension_action_canonical_model}
If $G$-acts on a reduced semi-affinoid space $X$, then this action extends canonically to its canonical model $\X=\Spf \big( \OOO_{X}(X)\big)$.
Indeed if $f\in\OOO_{X}(X)$ and $\sigma\in G$ then $\sigma$ acts on $f$ by sending it to $f\circ (\sigma^\beth)$, which is still bounded by $1$.
This action is continuous since it restricts to an action on the biggest ideal of definition of $\OOO_{X}(X)$, which is $\big\{f\in\OOO_{X}(X) \;\big|\; |f(x)|<1 \text{ for all }x\in X'\big\}$.
\end{remark}



\section{Descent of semi-affinoid spaces}
\label{section_descent_model}

Let $K'$ be a finite tamely ramified Galois extension of $K$ with Galois group $G$, let $X$ be a $K$-analytic space and denote by $X'=X\otimes_KK'$ its base change to $K'$.
In this section we explain how we can use Weil restrictions and $G$-fixed loci to describe a model of $X$ in terms of a model of $X'$, when the latter is a semi-affinoid $K'$-analytic space.
\medskip

Group actions naturally induce actions on Weil restrictions as follows.
Let $\calC$ be a category with fiber product.
If $G$ is a finite group acting equivariantly on morphisms $\X\to\s'$ and $\s'\to \s$ in the category $\calC$, with trivial action on $\s$, then we have a $G$-action on $\prod_{\s'|\s}\X$ defined by  
\(
\varphi \cdot \sigma=\sigma_\X\circ\varphi\circ\sigma_{\T'},
\)
where $\T\to\s$ is a morphism, $\T'=\T\times_\s\s'$, $\varphi\in \prod_{\s'|\s}\X(\T)$, $\sigma\in G$, $\sigma_{\X}$ is the automorphism induced by $\sigma$ on $\X$, and $\sigma_{\T'}=\sigma_{\s'}\times 1_\T$ where $\sigma_{\s'}$ is the automorphism induced by $\sigma$ on $\s'$.

Since dilated Weil restrictions are not defined by a universal property, the argument above does not apply to them.
Assume now that $G$ acts on an affine special formal $R'$-scheme $\X'$.
Then we obtain a $G$-action on the $K'$-analytic space $\big(\X'\big)^\beth$, hence a $G$-action on $\prod\nolimits_{K'|K}\big(\X'\big)^\beth$.
Whenever this action extends to a $G$-action on $\prod\nolimits_{R'|R}^\mathrm{dil}\X'$, which by Proposition~\ref{proposition_semi-affinoid_restriction} is a model of the latter, we say that the $G$-action on the dilated Weil restriction is \emph{compatible with the $G$-action} on $\big(\X'\big)^\beth$.
Such an action is described explicitly in the case of a base change in Lemma~\ref{lemma_computation_G-fix_Weil_basechange}, while explicit computations in the case of models of annuli are performed in Section~\ref{section_forms_annuli}.
We can now state the main result of the section.

\begin{theorem}\label{theorem_descent_model}
Let $K'|K$ be a finite tamely ramified Galois extension of discretely valued fields with Galois group $G$, let $R'|R$ be the corresponding extension of valuation rings, and let $X$ be a separated $K$-analytic space.
Assume that $X'=X\otimes_KK'$ is a semi-affinoid $K'$-analytic space, and let $\X'$ be an affine model of $X'$ such that $G$ acts on the dilated Weil restriction $\prod\nolimits_{R'|R}^\mathrm{dil}\X'$ compatibly with the natural semilinear Galois action on $X'$.
Then we have 
\[
\Big(\prod\nolimits_{K'|K}X'\Big)^G \cong \bigg(\Big(\prod\nolimits_{R'|R}^\mathrm{dil}\X'\Big)^G\bigg)^\beth\cong X.
\]
\end{theorem}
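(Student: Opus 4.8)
The plan is to deduce the theorem from the two descent tools established earlier: the compatibility of Weil restriction with base change, and the commutation of the $\beth$-functor with $G$-fixed loci (Proposition~\ref{proposition_beth_commutes_G-fix}). The first isomorphism $\big(\prod_{K'|K}X'\big)^G\cong\big(\big(\prod_{R'|R}^{\mathrm{dil}}\X'\big)^G\big)^\beth$ is the easy half: by Proposition~\ref{proposition_semi-affinoid_restriction} the dilated Weil restriction $\prod_{R'|R}^{\mathrm{dil}}\X'$ is a model of $\prod_{K'|K}X'$, so $\big(\prod_{R'|R}^{\mathrm{dil}}\X'\big)^\beth\cong\prod_{K'|K}X'$; the hypothesis that the $G$-action on $\prod_{R'|R}^{\mathrm{dil}}\X'$ is compatible with the induced $G$-action on $\prod_{K'|K}X'$ means exactly that this isomorphism is $G$-equivariant, and then Proposition~\ref{proposition_beth_commutes_G-fix} applied to the affine special formal $R$-scheme $\prod_{R'|R}^{\mathrm{dil}}\X'$ (writing it as $\Spf A_\lambda$ for $\lambda\gg0$, using Lemma~\ref{lemma_representability_G-fix}) gives $\big(\big(\prod_{R'|R}^{\mathrm{dil}}\X'\big)^G\big)^\beth\cong\big(\prod_{K'|K}X'\big)^G$.

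**The main point.** The substantive claim is the second isomorphism, $\big(\prod_{K'|K}X'\big)^G\cong X$. The natural map comes from the unit of the Weil restriction adjunction: base change $X\to X\otimes_KK'=X'$ induces, by the universal property of $\prod_{K'|K}$, a canonical $K$-morphism $u\colon X\to\prod_{K'|K}X'$, and since the Galois action on $X'$ is the one coming from the base change $X'=X\otimes_KK'$ (i.e.\ it is trivial on the factor $X$), the image of $u$ lands in the $G$-fixed locus, giving $u\colon X\to\big(\prod_{K'|K}X'\big)^G$. I would then show $u$ is an isomorphism. The cleanest route is Galois descent for morphisms: for any $K$-analytic space $T$, one has $\Hom_K(T,X)=\Hom_{K'}(T',X')^G$ where $T'=T\otimes_KK'$ — this is faithfully flat descent along $K'|K$ (valid because $K'|K$ is finite separable, hence $\operatorname{Spec}K'\to\operatorname{Spec}K$ is an fppf, even étale, cover, and analytification is compatible with such descent for separated spaces). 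On the other hand, by definition of the Weil restriction, $\Hom_K\big(T,\prod_{K'|K}X'\big)=\Hom_{K'}(T',X')$, and taking $G$-invariants on both sides (the $G$-action on the left permuting the $K'$-structure, on the right via the semilinear action on $X'$) yields $\Hom_K\big(T,(\prod_{K'|K}X')^G\big)=\Hom_{K'}(T',X')^G=\Hom_K(T,X)$, functorially in $T$; Yoneda then gives the isomorphism. One should check that the $G$-fixed locus functor on $K$-analytic spaces is representable here, which follows since $\prod_{K'|K}X'$ is covered by affinoids and on each affinoid piece the fixed locus is $\calM(\calA_G)$ as recalled before Proposition~\ref{proposition_beth_commutes_G-fix}; separatedness of $X$ (hence of $X'$ and of the constructions) ensures the gluing is unproblematic.

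**Expected obstacle.** The delicate step is verifying that $u$ is an isomorphism precisely at the level of analytic spaces — i.e.\ that Galois descent identifies $X$ with $\big(\prod_{K'|K}X'\big)^G$ rather than merely with some space having the same functor of points on a restricted class of test objects. Concretely, one must know that the equality $\Hom_K(T,X)=\Hom_{K'}(T\otimes_KK',X\otimes_KK')^G$ holds for enough $T$ (affinoids suffice) and that the two $G$-actions being compared — the one built into the Weil restriction's universal property and the semilinear one on $X'=X\otimes_KK'$ — genuinely agree under the identification $\Hom_K(T,\prod_{K'|K}X')=\Hom_{K'}(T',X')$. This is exactly the content of Edixhoven's descent argument in the scheme setting (\cite{Edixhoven92}, via \cite[\S7.6]{NeronModels}), and the work is to transport it to semi-affinoid analytic spaces; tameness of $K'|K$ is what guarantees, through Proposition~\ref{proposition_semi-affinoid_restriction} and the hypothesis on the compatible $G$-action, that the formal-scheme side of the picture behaves well and that $X$ itself comes out semi-affinoid. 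Once the equivariant descent identity is in hand, the theorem follows formally by combining it with the first isomorphism.
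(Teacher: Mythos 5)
Your first isomorphism is handled exactly as in the paper, via Proposition~\ref{proposition_semi-affinoid_restriction} and Proposition~\ref{proposition_beth_commutes_G-fix} together with the compatibility hypothesis. For the second isomorphism, however, you take a genuinely different route. The paper never invokes abstract Galois descent of morphisms: it first shows the statement is local on $X$ (the Weil restrictions of an affinoid cover glue $G$-equivariantly), reduces to $X$ affinoid with a model $\Spf A$, so that $X'\cong(\Spf(A\otimes_RR'))^\beth$ with its natural semilinear action, then decomposes $K'|K$ through the maximal unramified subextension $K^{\mathrm{ur}}$ using transitivity of Weil restriction and of fixed loci, and finally concludes by the explicit coordinate computation of Lemma~\ref{lemma_computation_G-fix_Weil_basechange}, which shows $\bigl(\prod\nolimits_{R'|R}^{\mathrm{dil}}\Spf(A\otimes_RR')\bigr)^G\cong\Spf A$ directly. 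That computation is where tameness actually enters (invertibility of $\zeta^i-1$ in $R$ in the totally ramified case), and it has the side benefit of exhibiting the explicit model that is reused in Section~\ref{section_forms_annuli}. Your route — the unit $u\colon X\to\prod_{K'|K}X'$ landing in the fixed locus, the identification $\Hom_K(T,(\prod_{K'|K}X')^G)=\Hom_{K'}(T',X')^G$, and Yoneda — is cleaner and, notably, would establish the analytic-level isomorphism $(\prod_{K'|K}X')^G\cong X$ for an arbitrary finite Galois extension, with tameness relegated to the model-theoretic statements.

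The one substantive caveat, which you correctly flag as the ``expected obstacle'' but do not resolve, is the descent identity $\Hom_K(T,X)=\Hom_{K'}(T\otimes_KK',X\otimes_KK')^G$ for analytic spaces. This is not available off the shelf from fppf descent for schemes; for affinoid $T$ and $X$ it reduces to $(\calA_T\otimes_KK')^G=\calA_T$ plus a boundedness check, but for non-affinoid targets one must glue a $G$-equivariant morphism $T'\to X'$ from a $G$-stable affinoid cover of $T'$ descending to a cover of $T$ — essentially the same care the paper takes via \cite[Proposition 1.14]{Bertapelle00} inside the proof of Proposition~\ref{proposition_semi-affinoid_restriction}. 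If you supply that lemma (or cite an appropriate descent result for separated analytic spaces along finite separable field extensions), your argument closes; as written it is a correct outline with that step asserted rather than proved, whereas the paper's Lemma~\ref{lemma_computation_G-fix_Weil_basechange} does the corresponding work explicitly.
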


The proof of this result relies on an explicit computation which is the content of the following lemma.

\begin{lemma}\label{lemma_computation_G-fix_Weil_basechange}
Let $K'|K$ be a finite Galois extension of discretely valued fields with Galois group $G$, let $R'|R$ be the corresponding extension of valuation rings, and let $A$ be a special $R$-algebra. 
Moreover, assume that one of the following properties holds:
\begin{enumerate}
\item $K'$ is totally tamely ramified over $K$;
\item $K'$ is unramified over $K$.
\end{enumerate}
Then $G$ acts on $\prod\nolimits_{R'|R}^\mathrm{dil}\Spf ( A \otimes_R R' )$ compatibly with the natural Galois action on $\Spf ( A \otimes_R R' )^\beth$, and we have
\[
\Big( \prod\nolimits_{R'|R}^\mathrm{dil} \Spf ( A \otimes_R R' ) \Big)^G \cong \Spf A.
\]
\end{lemma}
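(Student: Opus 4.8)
The plan is to treat the two cases separately, reducing in each case to the explicit description of the dilated Weil restriction from Section~\ref{section_weil}, and then computing the ring of $G$-coinvariants via Lemma~\ref{lemma_representability_G-fix}. Since any tamely ramified extension factors as a totally tamely ramified extension of an unramified one, and Weil restrictions compose (and fixed loci of composed group actions can be computed in stages), only the two extreme cases need to be handled. In the unramified case, Proposition~\ref{proposition_commutativity_unramified} identifies $\prod\nolimits_{R'|R}^\mathrm{dil}\Spf(A\otimes_RR')$ with the ordinary formal Weil restriction $\prod\nolimits_{R'|R}\Spf(A\otimes_RR')$, and by Lemma~\ref{lemma_restriction_affine} applied with the algebra $\calA'=A\otimes_RR'$ over $A'=R'$ (so $I=0$), this is $\Spf\big(A\otimes_R(R'^{\otimes_R m})\big)$ where $m=[K':K]$, with $G$ acting by permuting the $m$ factors of $R'$ through the standard isomorphism $R'\otimes_RR'\cong\prod_{g\in G}R'$; taking $G$-coinvariants of $A\otimes_R(R'\otimes_R\cdots\otimes_RR')$ collapses the tensor product back to $A\otimes_RR'^G=A\otimes_RR=A$.

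In the totally tamely ramified case I would use Proposition~\ref{proposition_dilated_restriction_explicit}: fixing a uniformizer $\varpi$ of $R'$ with $\varpi^m$ a uniformizer of $R$, the basis $\{1,\varpi,\ldots,\varpi^{m-1}\}$ of $R'$ over $R$ gives, for $\X'=\Spf(A\otimes_RR')=\Spf(A\{\underline 0\}[[\underline 0]]\otimes_RR')$ — i.e.\ with no extra variables beyond those already in a presentation of $A$ — a dilated Weil restriction whose associated analytic space is $\big(\Spf(A\otimes_R R)\big)^\beth$ after untwisting the dilation factors, since the ideal of coefficients $I^{\mathfrak c}$ of the zero ideal is zero. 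More precisely, $A$ has a presentation $R\{\underline U\}[[\underline V]]/J$, and $A\otimes_RR'$ has the corresponding presentation over $R'$; expanding each variable $U=\sum_i U_i\varpi^i$, $V=\sum_i V_i\varpi^i$, the relations in $J$ produce, through $I^{\mathfrak c}$, equations identifying the component ring with $A$ in the "degree-zero" variables $U_0,V_0$. The Galois action sends $\varpi\mapsto\zeta\varpi$ for an appropriate $m$-th root of unity $\zeta$ (which lies in $R$ since $K'|K$ is tamely ramified, using that $K$ contains enough roots of unity — or rather, the semilinear action on $A\otimes_RR'$ fixing $A$ and acting on $R'$), hence acts on the component variables by $U_i\mapsto\zeta^iU_i$ and similarly for the $V_i$ and the auxiliary $Z$-variables; the coinvariant quotient kills all components of positive weight, leaving exactly the weight-zero part, which is $\Spf A$.

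The key computational point, and the main obstacle, is to pin down the $G$-action on the dilated Weil restriction explicitly enough to see that it is indeed compatible with the natural semilinear Galois action on $\big(\Spf(A\otimes_RR')\big)^\beth$ in the sense defined just before the statement, and then to verify that passing to coinvariants of the special $R$-algebra $A_\lambda$ (for $\lambda$ large) recovers $A$ on the nose rather than merely up to some $\pi$-power distortion or normalization. This requires carefully tracking how the dilation variables $|\pi|^{\lambda}\underline X_i$ and the characteristic-polynomial-coefficient variables $Z_{\bullet}$ transform, and checking that the relations $Z-c_j(\cdot)$ together with the $G$-coinvariance relations cut down the $Z$'s and the higher-weight $X_i,Y_i$ exactly as needed; I would organize this by first doing it for the open disc $A=R[[Y]]$ (resp.\ closed disc $A=R\{X\}$), as in the proof of Proposition~\ref{proposition_dilated_restriction_explicit}, where the matrix $M$ and its equivalence to the circulant matrix over $K(\alpha)$ with $\alpha^m=1$ make the weight computation transparent, and then note that the general case is a formal consequence by functoriality and the presentation-independence built into the $\beth$ construction. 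Finally, compatibility of the two displayed isomorphisms with $X$ itself follows from Theorem~\ref{theorem_descent_model}'s framework applied to $X' = \big(\Spf(A\otimes_RR')\big)^\beth$, but since Theorem~\ref{theorem_descent_model} is proved \emph{using} this lemma, here one instead observes directly that $\prod\nolimits_{K'|K}(X\otimes_KK')\cong\big(\prod\nolimits_{R'|R}^\mathrm{dil}\X'\big)^\beth$ has $G$-fixed locus computed by Proposition~\ref{proposition_beth_commutes_G-fix} as $\big((\prod\nolimits_{R'|R}^\mathrm{dil}\X')^G\big)^\beth\cong(\Spf A)^\beth$.
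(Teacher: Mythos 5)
Your treatment of the totally tamely ramified case is essentially the paper's argument: expand each variable $T$ of $A\otimes_RR'$ in the basis $\{1,\varpi,\ldots,\varpi^{m-1}\}$, deduce from $\sigma(T)=T$ and $\sigma(\varpi)=\zeta\varpi$ that $\sigma$ scales the component variable $T_i$ by a power $\zeta^{\pm i}$, and use tameness (invertibility of $\zeta^i-1$ in $R$ for $1\leq i\leq m-1$) to kill every positive-weight component in the coinvariant ring, leaving $R\{\underline X_0\}[[\underline Y_0]]/(I^{\mathfrak c}\cap R\{\underline X_0\}[[\underline Y_0]])\cong A$. That part is fine, and your worry about "compatibility" is resolved exactly by this derivation of the action on the $T_i$ from the requirement that the original variables be fixed.

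The genuine gap is in the unramified case. The identification $\prod\nolimits_{R'|R}\Spf(A\otimes_RR')\cong\Spf\big(A\otimes_R(R'^{\otimes_Rm})\big)$ is false: the right-hand side is \emph{finite} over $\Spf A$, whereas the Weil restriction multiplies relative dimension by $m$ (already for $A=R\{X\}$ and $m=2$ the restriction is a two-dimensional polydisc, not the rank-four $A$-algebra $A\otimes_RR'\otimes_RR'$). Consequently the claimed collapse of coinvariants to $A\otimes_RR'^G$ computes the wrong object; note also that $(R'\otimes_RR')_G\cong R'$, not $R$, so even the toy version of that collapse does not give what you want. What is true is that after base change to $R'$ the restriction becomes the $m$-fold fibre product of $\Spf(A\otimes_RR')$ over $R'$, i.e.\ an $R$-form of $\Spf(A^{\hat\otimes_Rm})$, but turning that into the statement about coinvariants still requires an explicit computation. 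The paper's route is concrete: since $k'|k$ is Galois, the normal basis theorem provides $a_0\in R'$ with $\{\sigma_0(a_0),\ldots,\sigma_{m-1}(a_0)\}$ an $R$-basis of $R'$ permuted by $G$; expanding each variable $T=\sum_j\sigma_j(a_0)T_j$ and using $\sigma_i(T)=T$ forces $\sigma_i(T_j)=T_{k_{ij}}$ with $i\mapsto k_{ij}$ bijective, so the permutation action on the components is transitive and the coinvariant ring identifies all $T_j$, again yielding $A$. Without the normal basis (or some substitute producing a $G$-permuted basis and the transitivity of the induced permutation), your argument does not close.
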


\begin{proof}

\emph{Case (i)}. 
Write $m=[K':K]$ and let $\varpi$ be a uniformizer of $R'$ such that $\varpi^m\in R$. 
Then $\{1, \varpi, \ldots, \varpi^{m-1}\}$ is a basis of $R'$ as a free $R$-module. 
Moreover, $G$ is in this case cyclic, and given a generator $\sigma\in G$ the action of $G$ on $R'$ is determined by $\sigma(\varpi)=\zeta \varpi$, where $\zeta$ is a primitive $m$-th root of unity in  $R'$.
Write $A=R\{\underline X\}[[\underline Y]]/I$, where $\underline X$ and $\underline Y$ denote finite sets of variables.
Proposition~\ref{proposition_dilated_restriction_explicit} yields $\prod_{R'|R}^\mathrm{dil}\Spf (A\otimes_RR')=\Spf B$, with 
\[
\resizebox{1\textwidth}{!}{$
	B=\frac{R\big\{\underline X_0,|\varpi| \underline X_{1}, |\varpi|^2\underline X_{2}, \ldots, |\varpi|^{m-1} \underline X_{m-1}\big\}\big[\big[\underline Y_{0}, |\varpi| \underline Y_{1},\ldots,|\varpi|^{m-1}\underline Y_{m-1}\big]\big]}{I^{\mathfrak{c}}}.
$}
\]
In order to describe explicitly the action of $\sigma$ on $B$, let $T$ be one of the variables among $\underline X$ and $\underline Y$.
Since $\sigma$ acts trivially on the variables of $A\otimes_RR'$, by writing
\[
T=T_0+\varpi T_1+\ldots+\varpi^{m-1}T_{m-1}
\]
we have
\[
\sigma(T)=
\sigma(T_0)+\zeta\varpi\sigma(T_1)+\ldots+\zeta^{m-1}\varpi^{m-1}\sigma(T_{m-1})=
T,
\]
so that we can deduce that the action of $G$ on $B$ must verify $\sigma(T_i)=\zeta^i T_i$ for every $i\geq1$.
Since the ramification is tame, $\zeta^i-1$ is invertible in $R$ for every $1 \leq i \leq m-1$. 
Hence, for every such $i$, the variables $T_i$ vanish in $B_G$.
Since this is true for all the variables in $\underline X$ and $\underline Y$, we deduce that
$B_G=R\{\underline X_0\}[[\underline Y_0]]/(I^{\mathfrak c}\cap R\{\underline X_0\}[[\underline Y_0]])$.
From the definition of $I^\mathfrak c$ it readily follows that $B_G\cong A$, which by Lemma~\ref{lemma_representability_G-fix} is what we wanted to prove.

\emph{Case (ii)}. 
Let $k,k'$ be the residue fields of $R, R'$. 
Since $K'$ is Galois and unramified over $K$, it follows that $k'$ is Galois over $k$, with $[k':k]=[K':K]=m$, and we have a natural isomorphism $\Gal(K'|K)\cong \Gal(k'|k)$. 
Write $\Gal(K'|K)=\{\sigma_0=\id, \dots, \sigma_{m-1}\}$.
By the normal basis theorem (see for example \cite[VI, \S13, Theorem 13.1]{Lang}) there exists a basis $(\overline{a_0}, \ldots, \overline{a_{m-1}})$ of $k'$ over $k$ such that $\overline{\sigma_i}(\overline{a_0})=\overline{a_i}$ for every $i$.
As a result, given an element $a_0\in R'$ lifting $\overline{a_0}$, the set $\{\sigma_i(a_0): 0\leq i \leq m-1 \}$ is a basis of $R'$ as a free $R$-module.
As before, write $A=R\{\underline X\}[[\underline Y]]/I$, where $\underline X$ and $\underline Y$ denote finite sets of variables.
Since $K'|K$ is unramified the dilated Weil 
restriction coincides with the classical one, and so by Lemma~\ref{lemma_restriction_affine} we have $\prod_{R'|R}^\mathrm{dil}\Spf (A\otimes_RR')=\Spf B$, where
\pagebreak
\[
B=\frac{R\{\underline X_0, \ldots, \underline X_{m-1}\}[[\underline Y_{0}, \ldots, \underline Y_{m-1} ]]}{I^{\mathfrak{c}}}.
\]
Let $T$ be one of the variables among $\underline X$ and $\underline Y$, and write
\[
T=a_0T_0+\sigma_1(a_0)T_1+\ldots+\sigma_{m-1}(a_0)T_{m-1}.
\]
Observe that each $\sigma_i$ permutes the elements of the set $\{\sigma_i(a_0): 0\leq i \leq m-1 \}$, because $\sigma_i$ is an automorphism and for each $j$ we have $\sigma_i\circ\sigma_j=\sigma_{k_{ij}}$ for some $0\leq k_{ij} \leq m-1$.
Therefore, since $\sigma$ acts trivially on $T\in A\otimes_RR'$, the $G$-action on $B$ must verify $\sigma_i(T_j)=T_{k_{ij}}$ for every $i$ and $j$.
Moreover, since for every $j$ the association $i\mapsto k_{ij}$ is bijective, for every $j$ and $k$ there exists $i$ such that $\sigma_i(T_j)=T_k$.
It follows that $B_G=B/(\underline X_i-\underline X_j,\underline Y_i-\underline Y_j)_{i,j}$.
As in case (i), we deduce that
$B_G=R\{\underline X_0\}[[\underline Y_0]]/(I^{\mathfrak c}\cap R\{\underline X_0\}[[\underline Y_0]])$.
This shows that $B_G\cong A$ which, together with Lemma~\ref{lemma_representability_G-fix}, concludes the proof.
\end{proof}

\begin{proof}[Proof of Theorem~\ref{theorem_descent_model}]
We begin by showing that the statement of the theorem is local on $X$. 
Let $\{X_i\}$ be a cover of $X$ by affinoid subsets and set $X_i'=X_i\otimes_KK'$ for every $i$, so that we have $X'= \cup X_i'$.
The Weil restriction functor gives immersions $\prod\nolimits_{K'|K}X_i' \hookrightarrow \prod\nolimits_{K'|K}X'$ for every $i$, and $\prod\nolimits_{K'|K}X'$ is obtained by gluing $\prod\nolimits_{K'|K}X_i'$ along the natural isomorphisms induced by $\prod\nolimits_{K'|K}X_i' \cap \prod\nolimits_{K'|K}X_j' \cong \prod\nolimits_{K'|K} \big (X_i' \cap X_j' \big)$.
Finally, since the action of $G$ on $X'$ arises from base-change, the isomorphisms involved in the gluing procedure are $G$-equivariant by construction. 
It follows that $\big(\prod\nolimits_{K'|K}X'\big)^G$ is the gluing of the $\big(\prod\nolimits_{K'|K}X_i'\big)^G$.
Hence, if the theorem holds for every $X_i$, we have that $\big(\prod\nolimits_{K'|K}X'\big)^G$ is obtained by gluing the $X_i$ along their intersections, so that $\big(\prod\nolimits_{K'|K}X'\big)^G= X$.
Let us now show that the theorem holds for $X$ affinoid $K$-analytic space. If $K''$ is an intermediate extension between $K$ and $K'$, by the universal property of the representing object we have that $\prod\nolimits_{K''|K} \big(\prod\nolimits_{K'|K''}X' \big) = \prod\nolimits_{K'|K}X'$. 
For the same reason, if the extensions $K'|K''$ and $K''|K$ are Galois, with respective Galois groups $G_1$ and $G_2$, then we have
\[
\bigg ( \prod\nolimits_{K''|K} \Big(\prod\nolimits_{K'|K''}X' \Big)^{G_1} \bigg)^{G_2} = \Big( \prod\nolimits_{K'|K}X'\Big)^G.
\]
Now, since the extension $K'|K$ is the composition of the two Galois extensions $K'|K^{\mathrm{ur}}$ and $K^{\mathrm{ur}}|K$, where $K^{\mathrm{ur}}$ is the maximal unramified extension of $K$ contained in $K'$, and therefore $K'$ is totally ramified over $K^{\mathrm{ur}}$, we can assume without loss of generality that $K'|K$ satisfies the hypothesis of Lemma~\ref{lemma_computation_G-fix_Weil_basechange}.
To conclude, observe that if $\Spf A$ is a model of $X$ then we have $\big( \Spf  (A \otimes_R R') \big)^\beth\cong X' \cong (\X')^\beth$.
Combining this fact with Propositions \ref{proposition_semi-affinoid_restriction} and \ref{proposition_beth_commutes_G-fix}, we obtain
\begin{align*}
\bigg(\Big(\prod\nolimits_{R'|R}^\mathrm{dil}\X'\Big)^G\bigg)^\beth & \overset{\ref{proposition_beth_commutes_G-fix}}{\cong} \bigg(\Big(\prod\nolimits_{R'|R}^\mathrm{dil}\X'\Big)^\beth\bigg)^G \overset{\ref{proposition_semi-affinoid_restriction}}{\cong} \Big(\prod\nolimits_{K'|K}\big(\X'\big)^\beth\Big)^G 
\\
& \cong \Big(\prod\nolimits_{K'|K} \big( \Spf  (A \otimes_R R') \big)^\beth \Big)^G 
\\
& \overset{\ref{proposition_semi-affinoid_restriction}}{\cong} \bigg( \Big( \prod\nolimits_{R'|R}^\mathrm{dil} \Spf ( A \otimes_R R' ) \Big)^\beth\bigg)^G \\
& \overset{\ref{proposition_beth_commutes_G-fix}}{\cong} \bigg( \Big( \prod\nolimits_{R'|R}^\mathrm{dil} \Spf ( A \otimes_R R' ) \Big)^G\bigg)^\beth.
\end{align*}
The theorem now follows by applying Lemma \ref{lemma_computation_G-fix_Weil_basechange}.
\end{proof}

The Weil restriction and the G-fix locus of a semi-affinoid analytic space under finite tamely ramified extensions are both semi-affinoid, so we obtain the following corollary of Theorem \ref{theorem_descent_model}.
\begin{corollary}
Let $K'$ be a finite, tamely ramified extension of $K$, and let $X$ be a $K$-analytic space such that $X'=X\otimes_KK'$ is a semi-affinoid $K'$-analytic space. Then $X$ is semi-affinoid.
\end{corollary}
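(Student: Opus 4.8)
The plan is to deduce the corollary from Theorem~\ref{theorem_descent_model} by reducing to the situation in which that theorem applies, i.e.\ to the case where the given model $\X'$ of $X'$ carries a $G$-action on its dilated Weil restriction compatible with the semilinear Galois action on $X'$. The statement as phrased concerns an arbitrary finite tamely ramified extension $K'|K$, not necessarily Galois, so first I would pass to the Galois closure: let $L|K$ be a finite Galois extension containing $K'$; since $K'|K$ is tamely ramified one can choose $L$ to be tamely ramified over $K$ as well (the tame quotient of the absolute Galois group is well understood, or one simply notes that compositum and normal closure of tamely ramified extensions are tamely ramified). Then $X\otimes_KL=(X'\otimes_{K'}L)$ is semi-affinoid, being the base change of a semi-affinoid $K'$-analytic space along the finite extension $L|K'$; here I would invoke that semi-affinoid spaces are stable under finite base change, which follows from the description $\X^\beth$ via special $R'$-algebras together with the fact that $A'\otimes_{R'}R''$ is again a special algebra. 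So it suffices to treat the case $K'|K$ finite tamely ramified \emph{Galois}, with $G=\Gal(K'|K)$.

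In the Galois case, the remaining point is to produce an affine model $\X'$ of $X'$ such that $G$ acts on $\prod_{R'|R}^{\mathrm{dil}}\X'$ compatibly with the natural Galois action on $X'=(\X')^\beth$. The natural candidate is the canonical model: replacing $X'$ by its reduction if necessary (one can check semi-affinoidness after passing to the reduction, since $X_{\mathrm{red}}$ semi-affinoid and the nilradical being finitely generated gives $X$ semi-affinoid; alternatively argue directly), we may assume $X'$ reduced and set $\X'=\Spf\big(\OOO_{X'}(X')\big)$, the canonical model of Lemma~\ref{lemma_canonical_model}. By Remark~\ref{remark_extension_action_canonical_model} the semilinear $G$-action on $X'$ extends to a (semilinear over $R'|R$) action on $\X'$, and then to an action on the analytic Weil restriction $\prod_{K'|K}(\X')^\beth=\prod_{K'|K}X'$ as in Section~\ref{section_descent_model}. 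One then checks, using the explicit description of the dilated Weil restriction (Proposition~\ref{proposition_dilated_restriction_explicit}, combined with Proposition~\ref{proposition_commutativity_unramified} for the unramified part, exactly as in the proof of Lemma~\ref{lemma_computation_G-fix_Weil_basechange}), that this action is induced by an algebraic action on the special $R$-algebra defining $\prod_{R'|R}^{\mathrm{dil}}\X'$, hence is compatible in the sense required. With this in place, Theorem~\ref{theorem_descent_model} applies and gives
\[
X\cong\Big(\prod\nolimits_{K'|K}X'\Big)^G\cong\bigg(\Big(\prod\nolimits_{R'|R}^{\mathrm{dil}}\X'\Big)^G\bigg)^\beth,
\]
and the right-hand side is $\Y^\beth$ for the affine special formal $R$-scheme $\Y=\big(\prod_{R'|R}^{\mathrm{dil}}\X'\big)^G$ (affine and special because the dilated Weil restriction is, by construction, an affine special formal $R$-scheme, and a $G$-fixed locus of an affine formal scheme is again affine by Lemma~\ref{lemma_representability_G-fix}). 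Therefore $X$ is semi-affinoid.

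The step I expect to be the main obstacle is verifying the \emph{compatibility of the $G$-action on the dilated Weil restriction} for the canonical model of a general (not necessarily affinoid) semi-affinoid $X'$ — that is, checking that the abstractly-defined Galois action on $\prod_{K'|K}X'$ is realized by an honest action on the special $R$-algebra $A_\lambda$ occurring in the definition of $\prod_{R'|R}^{\mathrm{dil}}\X'$, uniformly in $\lambda$. In the affinoid case this is essentially the content of Lemma~\ref{lemma_computation_G-fix_Weil_basechange}'s computation; for the general semi-affinoid case one writes $X'$ as an increasing union of $G$-stable affinoid domains $U_n$ (the $G$-action is semilinear so it permutes a cofinal system of Berthelot-type affinoid neighborhoods, and one can arrange these to be $G$-stable by intersecting over $G$), applies the affinoid computation on each $U_n$, and passes to the limit, using that all the maps in sight are $G$-equivariant. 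A secondary, more bookkeeping-type point is the reduction step handling non-reduced $X'$: since $\OOO_{X'}(X')$ need not be special in that case (cf.\ the discussion after Lemma~\ref{lemma_canonical_model}), one should instead start from \emph{any} affine model $\X'=\Spf A'$ and note that the $G$-action on $X'$ need not a priori extend to $A'$; the clean fix is to replace $A'$ by the integral closure of $\OOO_{X'_{\mathrm{red}}}(X'_{\mathrm{red}})$ pulled back appropriately, or more simply to first prove the corollary for reduced $X'$ and then deduce the general case from the fact that $X$ is semi-affinoid iff $X_{\mathrm{red}}$ is (the nilradical of $\OO_X(X)$ being nilpotent and $\OO_{X_{\mathrm{red}}}(X_{\mathrm{red}})$-coherent). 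Neither of these is deep, but both require some care to state correctly.
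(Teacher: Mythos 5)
Your argument is correct and, like the paper's, rests entirely on Theorem~\ref{theorem_descent_model}; but the two routes through that theorem differ in a way worth spelling out. The paper's own justification is one sentence: it uses only the isomorphism $X\cong\big(\prod_{K'|K}X'\big)^G$ together with the facts that the Weil restriction of a semi-affinoid space is semi-affinoid (Proposition~\ref{proposition_semi-affinoid_restriction}) and that the $G$-fixed locus of a semi-affinoid space is semi-affinoid (Lemma~\ref{lemma_representability_G-fix} and Proposition~\ref{proposition_beth_commutes_G-fix}). Crucially, that first isomorphism is established in the proof of Theorem~\ref{theorem_descent_model} by localizing on $X$ and invoking Lemma~\ref{lemma_computation_G-fix_Weil_basechange} for the base-change models $\Spf(A_i\otimes_RR')$ of affinoid pieces, so it never requires exhibiting a global model $\X'$ of $X'$ with a compatible $G$-action on its dilated Weil restriction. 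By instead invoking the full displayed conclusion of the theorem, you oblige yourself to produce such a model, which is exactly the obstacle you flag as the hardest step; a cleaner way around it is to apply Remark~\ref{remark_extension_action_canonical_model} not to $X'$ (where the action is only semilinear and must then be transported through the Weil restriction) but directly to the $K$-analytic space $\prod_{K'|K}X'$, on which $G$ acts $K$-linearly: its canonical model then carries the action, and Proposition~\ref{proposition_beth_commutes_G-fix} yields semi-affinoidness of the fixed locus with no compatibility check on $\prod^{\mathrm{dil}}_{R'|R}\X'$ at all. On the other hand, your two preliminary reductions --- passing to the Galois closure (the corollary is stated for an arbitrary finite tame extension while the theorem requires a Galois one, and the closure of a tame extension is again tame) and reducing to reduced $X'$ so that a canonical model is available --- address real gaps that the paper's one-line deduction silently skips, and both are handled correctly; the base-change stability of semi-affinoid spaces that you use there is exactly what the paper itself uses when it writes $\big(\Spf(A\otimes_RR')\big)^\beth\cong X'$.
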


\begin{remark}
From the computation of $\big( \prod\nolimits_{R'|R}^\mathrm{dil} \Spf ( A \otimes_R R' ) \big)^G$ in the proof of Lemma \ref{lemma_computation_G-fix_Weil_basechange}, one can see that this result remains true if we replace the dilated restriction with the usual Weil restriction. 
However, this is not sufficient to prove Theorem~\ref{theorem_descent_model}, as the isomorphisms provided by Proposition \ref{proposition_semi-affinoid_restriction} no longer hold when $K'$ is ramified over $K$. 
\end{remark}

\begin{remark}
Following the notation of the theorem, we obtain a model of $X$ if we consider the flatification of the dilated Weil restriction $\X=\prod\nolimits_{R'|R}^\mathrm{dil}\X'$, which is the affine special formal $R$-scheme $\Spf\big(\OO_\X(\X)/\pi\text{-torsion}\big)$. 
Moreover, the theorem holds if we replace $\X$ by its integral closure in its generic fiber. 
Since $K'$ is separable over $K$, $X'$ is reduced if and only if $X$ is reduced. 
When this is the case, and $\X'=\Spf \big( \OOO_{X'}(X')\big)$ is the canonical model of $X'$, the $G$-action on $X'$ extends canonically to $\X'$, as observed in Remark~\ref{remark_extension_action_canonical_model}.
Putting all of this together, we obtain the canonical model of $X$ by taking the integral closure in its generic fiber of the flatification of the formal scheme produced by Theorem~\ref{theorem_descent_model}. 
\end{remark}
%


\section{Annuli and their automorphisms} \label{section_annuli}

In this section we study automorphisms of finite order of annuli, not only of closed annuli but also of open and semi-open ones.
Using techniques reminiscent of the theory of Newton polygons, we prove an analogue of Weierstrass preparation theorem for bounded functions on annuli.
We use this result to show that, for a suitable choice of a presentation, every tame automorphism of finite order of an annulus is linear.
\medskip

A $K$-analytic space $V$ is said to be an \emph{open annulus} (or \emph{semi-open annulus}, or \emph{closed annulus}) if it is a semi-affinoid $K$-analytic space having a model of the form $\Spf A$, with $A \cong R[[X,Y]]/(XY-\pi^e)$ (respectively $A\cong R\{X\}[[Y]]/(XY-\pi^e)$, or $A \cong R\{X,Y\}/(XY-\pi^e)$) for some positive integer $e\in \N$.
Given such a presentation, the pair $(X,Y)$ of elements of $A$ is referred to as a \emph{Laurent pair} for the annulus $V$, and the integer $e$ as the \emph{modulus} of the annulus.

Observe that $V$ is distinguished, $\Spf A$ is its canonical model, and by Lemma~\ref{lemma_canonical_model} we have $A\cong \OOO_V(V)$. 
The canonical reduction of an open (or semi-open, or closed) annulus is $\Spf A_k$, where $A_k\cong k[[X,Y]]/(XY)$ (respectively $A_k\cong k[X][[Y]]/(XY)$, or $A_k\cong k[X,Y]/(XY)$); in all three cases it has two irreducible components, of generic points $(X)$ and $(Y)$. 
As usual, if $f\in A$ is a bounded function on $V$, we denote by $\overline f$ its reduction, that is its image in the ring $A_k$.

\begin{remark}\label{units_in_annuli}
If $A$ is a special $R$-algebra, an element $f$ of $A$ is a unit if and only if its image $\overline f$ in $A_k$ is a unit.
In particular, a bounded function $f(X,Y)=\sum_{i\geq0} a_i X^i + \sum_{i>0} b_i Y^i$ on an open (or semi-open, or closed) annulus is invertible if and only if $\ord_\pi(a_0)=0$ (respectively $\ord_\pi(a_0)=0$ and $\ord_\pi(a_i)>0$ in the semi-open case, or $\ord_\pi(a_0)=0$, $\ord_\pi(a_i)>0$, and $\ord_\pi(b_i)>0$ in the closed case).
\end{remark}

\begin{remark}\label{remark_modulus_intrinsic}
Observe that two annuli which are isomorphic over $K$ have the same modulus.
Indeed, if $V$ is a $K$-annulus of modulus $e$, then $e+1$ is the number of irreducible components of the special fiber of the minimal regular model of $V$ over $R$.
An interpretation in terms of the geometry of the associated Berkovich space is the following.
If $V$ is a $K$-annulus, then the topological space underlying the associated Berkovich space is an infinite tree that retracts by deformation onto its skeleton, which is the unique subset that connects the two points of the boundary of $V$ and is homeomorphic to an interval.
If $V$ has modulus $e$, then its skeleton contains in its interior exactly $e-1$ points onto which a $K$-rational point retracts.
\end{remark}

To study analytic functions on annuli it is useful to consider some valuations of the ring $\OOO_V(V)$.
For the reader's convenience, we first recall what happens for analytic functions on discs.
Let $f=\sum_{i\geq0}a_i X^i \in R[[X]]$ be a bounded function on the open unit $K$-analytic disc $D^-=\big\{x\in\A^{1,\mathrm{an}}_K\,\big|\,|X(x)|<1\big\}$, and denote by $\eta(f)=\min_{i}\{\ord_\pi(a_i)\}$ the valuation of $f$ at the Gauss point of the disc.
Then the smallest degree of a monomial of the reduction $\overline{f/\eta(f)}\in k[[X]]$ of $f/\eta(f)$ is $v(f)=\min \big\{i \in \Z    \,\,\big|\,\, \ord_\pi(a_i) = \eta_X(f) \big\}$.
It follows from the Weiestrass preparation theorem that $f$ has $v(f)$ zeros on $D^-$.
If moreover $f$ converges on the closed unit $K$-analytic disc $D$, that is it belongs to $R\{X\}$, then $\overline{f/\eta(f)}$ is a polynomial of degree $\nu(f)=\max \big\{i \in \Z    \,\,\big|\,\, \ord_\pi(a_i) = \eta_X(f) \big\}$, and so $f$ has $\nu(f)$ zeros on $D$.

Assume now that $V$ is a $K$-analytic annulus, and let $(X,Y)$ be a Laurent pair for $V$.
We define \emph{boundary valuations} associated with each irreducible component of the canonical reduction of $V$.
To the component corresponding to the ideal $(X)$ one attaches the rank 2 valuation on $\OOO_V(V)$ defined by $f\mapsto \big(\eta_X(f), v_X(f)\big)\in \Z_{\geq0}\times\Z$, where we write $f=\sum\nolimits_{i \in \Z} a_i X^i$ as a function of $X$ and set
\begin{align*}
\eta_X \Big(\sum\nolimits_{i \in \Z} a_i X^i \Big)=&\,\min_{i}\{\ord_\pi(a_i)\},
 \\ 
v_X\Big(\sum\nolimits_{i \in \Z} a_i X^i\Big)=&\, \min \big\{i \in \Z    \,\,\big|\,\, \ord_\pi(a_i) = \eta_X(f) \big\}. 
\end{align*} 
In the same way, by expressing $f$ as a function of $Y$, one can associate with the component corresponding to the ideal $(Y)$ a rank 2 valuation $f\mapsto \big(\eta_Y(f), v_Y(f)\big)$.

\begin{remark}
The (unordered) pair of boundary valuations of an annulus does not depend on the choice of a Laurent pair.
Indeed, if $(X',Y')$ is another Laurent pair, then there exists a unit $u$ of $\calO^\circ_V(V)$ such that either $X'=uX$ or $X'=uY$, and the boundary valuations vanish on $u$.
This independence is reflected by the fact that boundary valuations are the two type 5 points pointing toward the interior in the boundary of the adic space of the annulus.
\end{remark}

Finally, if $V$ is a closed annulus, with every element $f=\sum\nolimits_{i \in \Z} a_i X^i$ of $\OOO_V(V)$ we associate the integer
\begin{align*} 
\nu_X(f)=\max\big\{i \in \Z  \,\,\big|\,\, \ord_{\pi}(a_i)=\eta_X(f)\big\}.
\end{align*}

We are going to give a geometric interpretation of the valuations introduced above, and deduce a Weierstrass preparation result for analytic functions on annuli.
The basic step is the following lemma.
To simplify the notation, we write $f(a)=f(a,a^{-1}\pi^e)$.

\begin{lemma}\label{lemma_Weierstrass_division}
Let $f(X,Y)\in R\{X,Y\}/(XY-\pi^e)$ be a bounded function on a closed $K$-analytic annulus. Then:
\begin{enumerate}
\item If there exists an element $a$ of $R^\times$ such that $f(a)=0$, we can write 
\[
Y f(X,Y) = (Y-a^{-1}\pi^e)g(X,Y)
\]
for some function $g(X,Y)$ in $R\{X,Y\}/(XY-\pi^e)$.
\item The function $f$ has $\nu_X(f)- v_X(f)$ many zeros on the subset of $V$ defined by $|X|=1$.
\end{enumerate}
\end{lemma}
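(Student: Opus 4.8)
Since the statement is vacuous when $f=0$, assume $f\neq0$. For part~(i) the plan is a Weierstrass-style manipulation. Writing $f=\sum_{i\in\Z}a_iX^i$ as a Laurent series in $X$ (so that, under $Y=\pi^eX^{-1}$, the hypothesis reads $f(a)=\sum_{i\in\Z}a_ia^i=0$) and setting $\alpha=a^{-1}\pi^e$, one has
\[
f(X,Y)=f(X,Y)-f(a)=\sum_{i\geq0}a_i(X^i-a^i)+\sum_{j>0}b_j(Y^j-\alpha^j),
\]
where $b_j=a_{-j}\pi^{-ej}\in R$. Each $X^i-a^i$ is divisible by $X-a$ and each $Y^j-\alpha^j$ by $Y-\alpha$, with quotients that converge in $R\{X\}$ and $R\{Y\}$ respectively because $a_i,b_j\to0$ and $|a|=1$, $|\alpha|<1$; thus the first sum equals $(X-a)h$ with $h\in R\{X\}$ and the second equals $(Y-\alpha)g_1$ with $g_1\in R\{Y\}$. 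In $R\{X,Y\}/(XY-\pi^e)$ we have $Y(X-a)=\pi^e-aY=-a(Y-\alpha)$, so multiplying by $Y$ converts the factor $X-a$ into $Y-\alpha$ and gives $Yf=(Y-\alpha)\bigl(-ah+Yg_1\bigr)$, which is the desired expression.

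For part~(ii) the plan is an induction on $N:=\nu_X(f)-v_X(f)\geq0$ using part~(i). Two elementary observations are needed first. The locus $\{|X|=1\}$ is the affinoid subdomain $\{|Y|\leq|\pi^e|\}$ of $V$, on which $X$ is invertible, with affinoid algebra $K\{X,X^{-1}\}$ and reduction $k[X,X^{-1}]$; and since $k[X,X^{-1}]$ is a domain, the boundary valuation $f\mapsto(\eta_X(f),v_X(f))$ and the function $f\mapsto\nu_X(f)$ are additive on products (trailing and leading terms of the reductions multiply without cancellation, and the Gauss norm $|\cdot|_{|X|=1}$ is multiplicative). When $N=0$, the reduction of $\pi^{-\eta_X(f)}f$ on $\{|X|=1\}$ is a unit times the monomial $X^{v_X(f)}$, so $f$ itself is a unit times $\pi^{\eta_X(f)}X^{v_X(f)}$ on $\{|X|=1\}$ and has no zeros there, as required.

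For the inductive step $N\geq1$: the reduction of $\pi^{-\eta_X(f)}f$ on $\{|X|=1\}$ equals $X^{v_X(f)}P(X)$ with $P\in k[X]$ of degree $N$ and $P(0)\neq0$, so $f$ has a zero on $\{|X|=1\}$, as one sees by restricting $f$ to a residue disc around a root of $P$ and invoking the Weierstrass statement for discs recalled above. After a finite extension $K'|K$, which alters neither $N$ nor the number of zeros of $f$ on $\{|X|=1\}$ (a base-change-invariant divisor degree), we may assume this zero is rational, say $f(a)=0$ with $a\in R'^\times$. Part~(i) then gives $Yf=(Y-a^{-1}\pi^e)g$. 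On $\{|X|=1\}$ the function $Y=\pi^eX^{-1}$ is a unit while $Y-a^{-1}\pi^e$ equals a unit times $X-a$, hence has a single zero there; additivity of zero-divisors thus reduces the count for $f$ to $1$ plus the count for $g$. On the other hand $v_X(Yf)=v_X(f)-1$, $\nu_X(Yf)=\nu_X(f)-1$, $v_X(Y-a^{-1}\pi^e)=-1$ and $\nu_X(Y-a^{-1}\pi^e)=0$, so additivity of $(\eta_X,v_X)$ and of $\nu_X$ applied to $Yf=(Y-a^{-1}\pi^e)g$ gives $\nu_X(g)-v_X(g)=N-1$. The induction hypothesis then yields the claim.

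I expect the delicate point to be the reduction to a rational zero: one must check that restricting $f$ to $\{|X|=1\}$ and reducing produces a Laurent polynomial whose non-monomial factor has degree exactly $\nu_X(f)-v_X(f)$ with non-zero constant term, that this forces a geometric zero on $\{|X|=1\}$, and that the relevant divisor degree is unchanged under finite base change. The manipulation in part~(i) and the multiplicativity of $(\eta_X,v_X)$ and $\nu_X$ are routine once the set-up is in place.
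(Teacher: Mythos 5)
Your proof is correct and follows essentially the same route as the paper's: part (i) produces the same cofactor $g$ (your $-ah+Yg_1$ coincides coefficient-by-coefficient with the paper's explicit $\sum c_iX^i+\sum d_iY^i$ after normalizing $a=1$), and part (ii) is the same peeling-off of one rational zero at a time via part (i), with the bookkeeping $\nu_X-v_X\mapsto\nu_X-v_X-1$ at each step. Your added care about the reduction $k[X,X^{-1}]$, multiplicativity of the boundary valuations, and invariance under base change only makes explicit what the paper leaves implicit.
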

\begin{proof}
Write $f(X,Y)=\sum_{i\geq 0} a_i X^i + \sum_{i> 0} b_iY^i$, and assume that $f(a)=0$.
By replacing $f(X,Y)$ with $f(aX, a^{-1}Y)$, we can assume without loss of generality that $a=1$, so that $f(1)=\sum a_i + \sum b_i \pi^{ei}=0$.
Set $c_i= - \sum_{k\geq i+1} a_k$ and $d_i=\sum_{k\geq i}b_k\pi^{e(k-i)}$.
Then $g(X,Y)= \sum_{i\geq 0} c_i X^i + \sum_{i> 0} d_iY^i$ is an element of $R\{X,Y\}/(XY-\pi^e)$.
Comparing coefficients, one verifies that $Y f(X,Y) = (Y-\pi^e)g(X,Y)$, which proves $(i)$.
Now, for a general $f$, observe that $\nu_X(f)=v_X(f)$ if and only if the reduction $\overline{f/\eta_X(f)}$ of $f/\eta_X(f)$ is a monomial of degree ${v_X(f)}$, that is if and only if all zeros of $f$ are contained in the subset of $V$ on which $|X|<1$.
If $f(a)=0$, without loss of generality, after replacing $K$ by an extension, we can assume that $a$ belongs to $R$.
By the previous part we can then write $Y f(X,Y)=(Y-a^{-1}\pi^e)g(X,Y)$, and since $\nu_X(Y)=v_X(Y)=-1$ and $\nu_X(Y-a^{-1}\pi^e)=1=v_X(Y-a^{-1}\pi^e)+1$, it follows that $\nu_X(f)- v_X(f)=\nu_X(g)- v_X(g)+1$.
We can repeat this argument as long as $\nu_X(g)- v_X(g)>0$, until we obtain a factorization of the form $Y^{\nu_X(f)-v_X(f)}\cdot f(X,Y)=P(Y)\cdot g(X,Y)$, with $P$ a polynomial having $\nu_X(g)- v_X(g)>0$ zeros in $W=\big\{x\in K^\mathrm{alg} \,\big|
\, |x|=1\big\}$ and satisfying $\nu_X(P)-v_X(P)=\nu_X(f)- v_X(f)$, and $g$ nowhere-vanishing on $W$.
This proves $(ii)$.
\end{proof}

Let $f$ be a bounded function on $V$, which as before we write as $f(X)=\sum_{i\in\Z} a_i X^i$.
Given a real number $0\leq r\leq e$, consider the positive real number
\[
\eta_r(f)= \min_{i\in\Z}\big\{\ord_\pi(a_i)+ir\big\}.
\]
Observe that $\eta_{0}(f)=\eta_X(f)$ and $\eta_{e}(f)=\eta_Y(f)$, and that the map $r \mapsto \eta_r(f)$ defines a piecewise linear function on $[0, e]$.
Moreover, the derivative of this function is discontinuous at a point $r\in]0,e[$ if and only if the value defining $\eta_r(f)$ is attained by more than one monomial of $f$.
This can happen only for finitely many values of $r$, since the valuation of $R$ is discrete, and those values are all rational.
We say that a rational number $0\leq r\leq e$ (with $0\leq r<e$ if $V$ is a semi-open annulus, and $0<r<e$ if $V$ is an open annulus) is a \emph{critical radius} of $f$ if the value defining $\eta_r(f)$ is attained by more than one monomial of $f$.

Observe that, if $r$ is rational, then after passing to the extension $K(\pi^r)$ of $K$ we have $\eta_r\big(f(X)\big)=\eta_X\big(f(\pi^rX)\big)$.
In particular, it follows from the definition that $r$ is a critical radius of $f$ if and only if $\nu_X \big(f(\pi^rX) \big) \neq v_X \big(f(\pi^rX)\big)$, or equivalently, by part $(ii)$ of Lemma~\ref{lemma_Weierstrass_division}, whenever $f(a)=0$ for some element $a$ of $K^\mathrm{alg}$ such that $|a|=|\pi|^r$.
In particular, since $f$ has finitely many critical radii, by Lemma~\ref{lemma_Weierstrass_division} it can only have finitely many zeros on $V$.

More generally, given a rational number $0<r_0<1$, the right derivative of $\eta_r(f)$ at $r_0$ is $v_X\big(f(\pi^rX)\big)$, while its left derivative at $r_0$ is $\nu_X\big(f(\pi^rX)\big)$, since in a small neighborhood of $r_0$ we have $\eta_r(f)=\min\big\{\ord_\pi(a_i)+ir \big|i\in\{v_X(f(\pi^rX)),\nu_X(f(\pi^rX))\}\big\}$.
Combined with Lemma~\ref{lemma_Weierstrass_division}, this makes it possible to count the zeros of $f$ on any open or closed sub-annulus of $V$ in terms of $v_X$ and $\nu_X$.

It is now simple to prove the following form of Weierstrass preparation for annuli. 

\begin{proposition}[Weierstrass preparation]\label{proposition_weierstrass_preparation}
Let $V$ be a $K$-analytic annulus, let $(X,Y)$ be a Laurent pair for $V$, and let $f\in \OOO_V(V)$ be a bounded analytic function on $V$.
Then there exist a monic polynomial $P\in R[Y]$ and a unit $u$ of $\OOO_V(V)$ such that 
\[
Y^{\alpha}f(X,Y)=\pi^{\eta_Y(f)}P(Y)u(X,Y),
\]
where $\alpha=v_X(f)$ if $V$ is an open annulus and $\alpha=\nu_X(f)$ otherwise.
\end{proposition}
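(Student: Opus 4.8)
The plan is to prove this by a Newton-polygon argument, with the Weierstrass division of Lemma~\ref{lemma_Weierstrass_division} (or, equivalently, a Hensel-type factorisation in the $\pi$-adically complete ring $\OOO_V(V)$) playing the role of the engine. Writing $f=\sum_{i\in\Z}a_iX^i$ and $m=\eta_Y(f)$, I would first show that the normalised function $g:=\pi^{-m}Y^{\alpha}f$ again lies in $\OOO_V(V)$: re-expanding $Y^{\alpha}f$ as a Laurent series $\sum_nc_nY^n$ in the single variable $Y$ by means of $XY=\pi^e$, one gets $c_n=b_{n-\alpha}$ for $n>\alpha$ and $c_n=a_{\alpha-n}\pi^{e(\alpha-n)}$ for $n\le\alpha$, and a short estimate gives $\min_n\ord_\pi(c_n)=m$, so that indeed $g\in\OOO_V(V)$. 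The same computation, crucially using that $\alpha=\nu_X(f)$ is the \emph{largest} index realising $\eta_X(f)$ (respectively $\alpha=v_X(f)$ is the smallest one, in the open case), shows that in the reduction $\overline g$ all terms involving negative powers of $Y$ disappear, so that $\overline g$ equals, up to a unit of the residue ring, a monic polynomial $\overline P_0\in k[Y]$; I would record its degree $d$. This is the step most sensitive to the type of $V$, since the reduction ring --- which is $k[[X,Y]]/(XY)$, $k[X][[Y]]/(XY)$ or $k[X,Y]/(XY)$ according to whether $V$ is open, semi-open or closed --- has slightly different units in the three cases, and the exact value of $\alpha$ together with the normalising factor $\pi^{m}$ is precisely what makes $\overline g$ come out in this shape.

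Next I would identify $d$ with the number $N$ of zeros of $f$ on $V$, counted with multiplicity and with the boundary convention imposed by the type of $V$. This follows by reading off the piecewise-linear concave function $r\mapsto\eta_r(f)$ on $[0,e]$: its slopes at the two endpoints are $v_X(f)$ and $-v_Y(f)$, each interior break contributes zeros at the corresponding critical radius by Lemma~\ref{lemma_Weierstrass_division}(ii) applied after the substitution $X\mapsto\pi^rX$, and the residual slope jumps $\nu_X(f)-v_X(f)$ and $\nu_Y(f)-v_Y(f)$ at the endpoints count the zeros on $|X|=1$ and on $|Y|=1$; comparing the total with the shape of $\overline P_0$ gives $d=N$. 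Then, after a finite extension of $K$ containing all the zeros $\zeta_1,\dots,\zeta_N$ of $f$ on $V$ (note $|Y(\zeta_l)|\le1$, so each $Y(\zeta_l)$ is integral), I would peel these zeros off one at a time via Lemma~\ref{lemma_Weierstrass_division}(i), reaching an identity $Y^{\alpha}f=\pi^m\bigl(\prod_{l=1}^N(Y-Y(\zeta_l))\bigr)u$ with $u$ without zeros on $V$. Since $f$ is defined over $K$, its zero locus $\{\zeta_1,\dots,\zeta_N\}$ is stable under the Galois action, so $P:=\prod_{l=1}^N(Y-Y(\zeta_l))$ descends to a monic polynomial in $R[Y]$ of degree $N=d$; finally, passing to reductions, the relation $\overline g=\overline P\,\overline u$ with both $\overline P$ and $\overline P_0$ monic of degree $d$ forces $\overline u$ to be a unit, hence $u\in\OOO_V(V)^\times$, and rearranging yields the stated factorisation. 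The open and semi-open cases go through in the same way once $f$ has been restricted to a closed sub-annulus large enough to contain all of its zeros.

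The step I expect to be the main obstacle is the very last one: making sure that the left-over factor $u$ is a genuine \emph{unit} of $\OOO_V(V)$, and not merely a function without zeros on $V$. These are not the same, since a zero-free function may reduce to a zero-divisor in the residue ring (for instance $X-\pi^{e+1}$ on a closed annulus). This is exactly why the normalising factors $\pi^{\eta_Y(f)}$ and $Y^{\alpha}$ must be chosen as in the statement: they are calibrated so that $\overline g$ is an honest degree-$d$ polynomial in $Y$ with $d$ equal to the number of zeros of $f$ on $V$, that is to the degree of the polynomial $P$ produced by peeling off those zeros, after which the reduction identity pins down $\overline u$. A lesser nuisance is the bookkeeping of the invariants $\eta_Y$, $v_X$ and $\nu_X$ across the three types of annuli and across the base change, but this is routine given the discussion preceding the proposition.
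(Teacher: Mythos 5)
Your overall strategy is the same as the paper's: pass to a finite extension containing the zeros of $f$, peel them off one at a time with Lemma~\ref{lemma_Weierstrass_division}(i), descend the resulting monic polynomial $P$ to $R[Y]$ by Galois invariance, and then argue that the leftover zero-free factor is a unit. The gap sits exactly in the step you yourself flag as the main obstacle. You deduce that $\overline{u}$ is a unit from the reduction identity $\overline{P}\,\overline{u}=c\,\overline{P}_0$ together with the fact that $\overline{P}$ and $\overline{P}_0$ are monic of the same degree $d$. This inference fails because $\overline{P}$ is in general a zero divisor of the canonical reduction ring: one has $|P(0)|=\prod_l|Y(\zeta_l)|$, so as soon as $f$ has a zero with $|Y|<1$ (the typical case) the reduction $\overline{P}$ is divisible by $Y$, and the identity determines $\overline{u}$ only up to an element of the annihilator of $\overline{P}$, which contains non-units. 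Concretely, on the closed annulus $XY=\pi^2$ with $f=X-\pi$ one finds $P(Y)=Y-\pi$, $\overline{P}=Y$, and the reduction identity reads $Y\,\overline{u}=-Y$ in $k[X,Y]/(XY)$, which is satisfied by every $\overline{u}=-1+Xh(X)$; the degree count alone cannot exclude the non-unit solutions. Knowing in addition that $u$ has no zeros on $V$ does rule them out, but proving that requires precisely the Newton-polygon input below, so the degree comparison is not doing the work.

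The argument that closes this step, and is the one the paper uses, analyses the zero-free factor directly: since $g$ has no zeros on $V$, it has no critical radii, so for every $r$ the minimum defining $\eta_r(g)$ is attained by one and the same monomial $a_{v_X(g)}X^{v_X(g)}$; dividing by $\pi^{\eta_X(g)}X^{v_X(g)}$ therefore leaves a function whose reduction is a nonzero constant, hence a unit of $\OOO_V(V)$ by Remark~\ref{units_in_annuli}, and the bookkeeping $\eta_X(g)+e\cdot v_X(g)=\eta_Y(g)=\eta_Y(f)$ produces the stated normalisation. With this replacement your preliminary computation of $\overline{g}$ and the identification $d=N$ (both of which are correct) become unnecessary. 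A secondary issue is your reduction of the open and semi-open cases to the closed one by restricting to a closed sub-annulus containing the zeros: a function invertible on that sub-annulus need not a priori be a unit of $\OOO_V(V)$ (its inverse may fail to extend), and the invariants $\alpha$ and $\eta_Y$ change with the Laurent pair of the sub-annulus; the monomial argument above treats the three types uniformly and avoids this transfer altogether.
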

\begin{proof}
We begin by claiming that we can write 
$$Y^{\alpha-v_X(g)} f(X,Y)=P(Y) g(X,Y),$$
where $P(Y)$ belongs to $R[Y]$ and the function $g\in \OOO_V(V)$ has no zeros on $V$.
Indeed, we can pass to an extension $K'$ of $K$ such that the zeros $a_i,\ldots,a_n$ of $f$ on $V$ are all $K'$-rational, so that by applying Lemma~\ref{lemma_Weierstrass_division} $n$ times we obtain $Y^n f(X,Y)=P(Y) g(X,Y)$, with $P(Y)=\prod_{i=1}^n(Y-\pi^ea_i^{-1})$, and we have $n=\alpha-v_X(g)$ because $g$ has no zeros on $V$.
Since $f$ is defined over $R$, the $a_i$ are permuted by the Galois group of $K'$ over $K$, so that $P(Y)$ has coefficients in $R$, and since $P(Y)$ is monic then $g$ has coefficients in $R$ as well.
Now, since the function $g$ does not vanish on $V$, the value of $\eta_r(g)$ is attained for every $r$ by a unique monomial $a_{v_X(g)}X^{v_X(g)}$ of $g(X)$, with $\ord_\pi(a_{v_X})=\eta_X(g)$.
Hence, one has $g(X,Y)=\pi^{\eta_X(g)} X^{v_X(g)} u(X,Y)$, where $u$ is a unit of $\OOO_V(V)$.
Substituting this in the first relation and multiplying both sides by $Y^{v_X(g)}$, we obtain the equality $Y^{\alpha} f(X,Y)=\pi^{\eta_X(g)+ e\cdot v_X(g)} p(Y) u(X,Y)$.
We have $\eta_X(g)+ e\cdot v_X(g)=\eta_Y(g)$ because the dominant monomial of $g(X)$, which is $a_{v_X(g)}X^{v_X(g)}$, has smallest valuation also among the monomials in $Y$ when writing $g$ as $g(\pi^eY^{-1},Y)$ (to see this, observe that the constant term of $g/(a_{v_X(g)}X^{v_X(g)})$ has valuation zero, and this remains true when writing $g$ in $Y$), where it is written as $a_{v_X(g)}\pi^{e\cdot v_X(g)}Y^{-v_X(g)}$.
Moreover, since $P(Y)$ is monic we have $\eta_Y\big(P(Y)\big)=\eta_Y(Y^n)=0$, and so $\eta_Y(g)=\eta_Y(f)$, which concludes the proof.
\end{proof}

\begin{remark}
In the case of open annuli, a Weierstrass preparation theorem was proven by Henrio in \cite[Lemme 1.6.]{Henrio01}.
His proof relies on an explicit computation to reduce the proof to the classical Weierstrass preparation for open discs.
While it is possible to prove Proposition~\ref{proposition_weierstrass_preparation} extending Henrio's techniques, we believe that the method we employ allows a deeper understanding of the geometric nature of the result.
Note that if we assumed Proposition~\ref{proposition_weierstrass_preparation} in the case of open annuli, that is Henrio's result, we could apply it to the restriction of a bounded function $f(X,Y)$ on a closed (or semi-open) annulus to the biggest open annulus it contains. 
We would obtain an invertible function $u(X,Y)$ in ${R[[X,Y]]}/{(XY-\pi^e)}$, but $u$ might not be invertible, nor convergent, on the closed (or semi-open) annulus.
\end{remark}

In the remaining part of the section we study the automorphisms of finite order $m$ of a $K$-annulus $V$, when $m$ is coprime with the residual characteristic of $K$.
We do not suppose that such an automorphism $\sigma$ acts trivially on $K$.
However, being a morphism of analytic spaces, $\sigma$ respects the absolute value of $K$ and therefore induces an automorphism $\bar\sigma$ of the residue field $k$ of $K$. 
The canonical reduction $\Spec \big(\calO^\circ_V(V)_k\big)$ of $V$ has two irreducible components, with generic points $(X)$ and $(Y)$, where $(X,Y)$ is a Laurent pair for $V$, and so the action induced by $\sigma$ on it can either fix both points $(X)$ and $(Y)$, or exchange them. 
In the first case we say that $\sigma$ \emph{fixes the branches} of $V$; in the second case we say that $\sigma$ \emph{switches the branches} of $V$.
Observe that this notion does not depend on the choice of the Laurent pair $(X,Y)$.

\begin{remark}\label{remark_switched_skeleton}
The automorphism $\sigma$ fixes the branches of the annulus $V$ if and only if its action on the Berkovich space of $V$ fixes its skeleton pointwise; while when it switches the branches the action on the skeleton reflects it over a point.
This explains our terminology.
\end{remark}

We now prove a linearization result for the action on $\calO^\circ_V(V)$ of an automorphism that fixes the branches of $V$.
In the case of open annuli, a slightly weaker form of this result appears in Henrio's unpublished doctoral dissertation \cite[Chapitre II, Propositions 3.1 and 3.3]{Henrio1999}.

\begin{proposition}\label{proposition_linearization}
Let $V$ be a $K$-analytic annulus of modulus $e$, and let $\sigma$ be an automorphism of $V$ of finite order $m$ that fixes the branches of $V$ and acts on $\pi$ as multiplication by a (not necessarily primitive) $m$-th root of unity $\zeta$. Assume that the characteristic of the residue field $k$ of $K$ does not divide $m$. Then, there exist a Laurent pair $(X,Y)$ for $V$ and a unit $u$ of $R$ such that
\begin{align*}
\sigma(X) = &\, u X, \\ 
\sigma(Y) = &\, \zeta^e u^{-1} Y.
\end{align*}
Moreover, if $\zeta$ is primitive, then $u$ can be taken to be a $m$-th root of unity, whereas if $\zeta=1$, then one can take $u=1$.
\end{proposition}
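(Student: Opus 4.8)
Write $A=\OOO_V(V)$. By Lemma~\ref{lemma_canonical_model} a choice of Laurent pair $(X_0,Y_0)$ identifies $A$ with $R[[X_0,Y_0]]/(X_0Y_0-\pi^e)$ (or with its semi-open, resp.\ closed, variant); by Remark~\ref{remark_extension_action_canonical_model} the automorphism $\sigma$ extends to a continuous ring automorphism of $A$, with $\sigma(\pi)=\zeta\pi$. It suffices to produce a Laurent pair $(X,Y)$ with $\sigma(X)=uX$ for some $u\in R^\times$, since then $Y:=\pi^eX^{-1}\in A$ automatically satisfies $\sigma(Y)=\zeta^e\pi^e\sigma(X)^{-1}=\zeta^eu^{-1}Y$. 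To get started, note that $\big(\zeta^{-e}\sigma(X_0),\sigma(Y_0)\big)$ is again a Laurent pair for $V$: applying $\sigma$ to the presentation of $A$ and using $\sigma(\pi^e)=\zeta^e\pi^e$ gives the relation $\big(\zeta^{-e}\sigma(X_0)\big)\cdot\sigma(Y_0)=\pi^e$. Since $\sigma$ fixes the branches, $\zeta^{-e}\sigma(X_0)$ still generates the branch $(\overline{X_0})$ of $A_k$, so comparing with the fact that any two Laurent pairs differ by a unit of $A$ (and possibly a swap of the two members) we get $\zeta^{-e}\sigma(X_0)=u_0X_0$ for some $u_0\in A^\times$; set $u_1:=\zeta^eu_0$, so that $\sigma(X_0)=u_1X_0$. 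Iterating and using that $X_0$ is a non-zero-divisor in the domain $A$, the relation $\sigma^m=\id$ yields $\prod_{j=0}^{m-1}\sigma^j(u_1)=1$.

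The main step is to kill the non-constant part of $u_1$ by a change of coordinate. Replacing $X_0$ by $vX_0$ with $v\in A^\times$ replaces $u_1$ by $\sigma(v)u_1v^{-1}$, so we are looking for $v\in A^\times$ and $c\in R^\times$ with $u_1=c\,\sigma(v)v^{-1}$; equivalently, the class of $u_1$ in $H^1\big(\langle\sigma\rangle,A^\times/R^\times\big)$ — which is well defined thanks to $\prod_j\sigma^j(u_1)=1$ — must vanish. Now $\langle\sigma\rangle$ is cyclic of order $m$, which is invertible in $R$ since it is prime to $\Char k$; and I claim that the $\langle\sigma\rangle$-module $A^\times/R^\times$ is uniquely $m$-divisible, hence cohomologically trivial, which forces the vanishing. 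Indeed, let $I$ be the largest ideal of definition of $A$: the group $1+I$ of principal units is uniquely $m$-divisible because $A$ is $I$-adically complete and the binomial coefficients $\binom{1/m}{\ell}$ lie in $R$, so that $(1+x)^{1/m}$ converges in $A$; likewise, by Remark~\ref{units_in_annuli} and the description of the canonical reduction, $A_k^\times/k^\times$ is a group of one-units in $k[[X_0]]$ and/or $k[[Y_0]]$, which is uniquely $m$-divisible over $k$ (Hensel, $m$ invertible in $k$); and $A^\times/R^\times$ is an extension of $A_k^\times/k^\times$ by $(1+I)/(1+\pi R)$, hence uniquely $m$-divisible as well. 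Choosing $v$ and $c$ with $u_1=c\,\sigma(v)v^{-1}$ and setting $X:=v^{-1}X_0$, $Y:=vY_0=\pi^eX^{-1}$, we obtain a Laurent pair with $\sigma(X)=cX$ and $\sigma(Y)=\zeta^ec^{-1}Y$, proving the statement with $u=c$. (In the spirit of Proposition~\ref{proposition_weierstrass_preparation} one can phrase this step differently: the $\sigma$-invariant function $P:=\prod_{j=0}^{m-1}\sigma^j(X_0)$ equals a unit of $A$ times $X_0^m$, and extracting a suitable $m$-th root of $P$ produces the coordinate $X$ directly.)

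It remains to obtain the refinements, which follow from Hilbert's Theorem~90. We have $\sigma(X)=cX$ with $c\in R^\times$ and $\prod_{j=0}^{m-1}\sigma^j(c)=1$. When $\zeta$ is primitive, $\sigma$ acts on $K$ with order exactly $m$, so $K\,|\,K^{\langle\sigma\rangle}$ is cyclic of degree $m$ and $c$ has norm $1$; Hilbert~90 produces $\lambda\in K^\times$ with $c=\sigma(\lambda)\lambda^{-1}$, and writing $\lambda=\pi^n\lambda_0$ with $\lambda_0\in R^\times$ and using $\sigma(\pi)=\zeta\pi$ we get $c=\zeta^n\,\sigma(\lambda_0)\lambda_0^{-1}$. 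Replacing $(X,Y)$ by $(\lambda_0^{-1}X,\lambda_0Y)$ turns $c$ into $\zeta^n$, an $m$-th root of unity. When $\zeta=1$ the uniformizer drops out of this identity, so the same rescaling gives $u=1$.

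The step I expect to demand the most care is the unique $m$-divisibility of $A^\times/R^\times$ in the open and semi-open cases: there $A^\times$ is strictly larger than $R^\times\cdot(1+\pi A)$, so one cannot merely split off the constants and must instead read off the units from the canonical reduction as indicated above. Once that is in place, the remaining ingredients — the cohomology of a cyclic group acting on a uniquely divisible module, and the Hilbert~90 normalization — are routine.
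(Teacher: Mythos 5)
Your argument is correct in its essential mechanism but takes a genuinely different route from the paper's. The paper first invokes its Weierstrass preparation (Proposition~\ref{proposition_weierstrass_preparation}) to produce the unit $U_1$ with $\sigma(X)=U_1X$, whereas you quote the remark that two Laurent pairs differ by a unit --- a fact the paper only asserts, and whose honest justification is exactly that Weierstrass argument, so your proof silently still depends on it. The paper then linearizes by hand: it takes the Teichm\"uller lift $u$ of the constant term of $U_1$ and forms the explicit average $X'=\sum_j u^{-1}\sigma(u^{-1})\cdots\sigma^{j-1}(u^{-1})\,\sigma^{j}(X)$, using $m\in k^\times$ to see that $X'/X$ is a unit. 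Your replacement of this by the vanishing of $\mathrm{H}^1\big(\Z/m\Z,\,A^\times/R^\times\big)$ for a uniquely $m$-divisible module is a clean conceptual repackaging of the same computation (the averaging operator is precisely an explicit splitting of the cocycle), and the structural inputs you need --- $A^\times=R^\times\cdot(1+I)$ with $I$ the largest ideal of definition, and unique $m$-divisibility of $1+I$ via the binomial series --- are correct in all three cases, including the open and semi-open ones you rightly single out as the delicate point. For the refinements you apply Hilbert~90 over $K$ itself where the paper uses Teichm\"uller representatives together with Hilbert~90 over the residue field; both work, and yours has the merit of treating the two subcases uniformly.

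Two corrections of detail. First, your d\'evissage of $A^\times/R^\times$ is off: the kernel of $A^\times/R^\times\to A_k^\times/k^\times$ is $(1+\pi A)/(1+\pi R)$, not $(1+I)/(1+\pi R)$; in fact $R^\times\cap(1+I)=1+\pi R$, so $(1+I)/(1+\pi R)$ is already the whole of $A^\times/R^\times$ --- which shortens your argument, since unique $m$-divisibility can then be read off directly without any extension argument. Second, your treatment of the case $\zeta=1$ tacitly assumes that $\sigma|_K$ has order exactly $m$: otherwise $\prod_j\sigma^j(c)=1$ only yields $N_{K|K^{\langle\sigma\rangle}}(c)^{m/d}=1$ for $d$ the order of $\sigma|_K$, and Hilbert~90 does not apply as stated (indeed the conclusion $u=1$ genuinely fails for a $K$-linear rotation $X\mapsto\zeta_m X$, for which $\zeta=1$). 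The paper's own proof makes the same tacit assumption when it declares $k|k^{\langle\bar\sigma\rangle}$ cyclic of degree $m$, and the assumption holds in every application in Section~\ref{section_forms_annuli}, so this is a shared imprecision rather than a defect specific to your proof --- but it deserves to be made explicit.
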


\begin{proof}
Let $(X,Y)$ be a Laurent pair for $V$.
Since $\big(\sigma(X), \zeta^{-e}\sigma(Y)\big)$ is again a Laurent pair and $\sigma$ fixes the branches, one gets $\eta_X\big(\sigma(X)\big)=0, v_X\big(\sigma(X)\big)=\nu_X\big(\sigma(X)\big)=1$ and $\eta_Y\big(\sigma(X)\big)=e$, and analogous relations hold for $\sigma(Y)$.
By Proposition \ref{proposition_weierstrass_preparation}, we have that 
\[
Y\sigma(X)=\pi^eP(Y)U_1 \;\;\;\mbox{ and }\;\;\; X\sigma(Y)=\pi^eQ(X)U_2,
\]
with $U_1$ and $U_2$ units of ${\OOO_V(V)}$, and $P$ and $Q$ monic polynomials.
By multiplying these two relations one gets $\zeta^e\pi^{2e}=\pi^{2e}P(Y)Q(X)U_1U_2$, so that $P(Y)Q(X)$ is invertible in $\OOO_V(V)$, yielding $P=Q=1$ (because $P$ and $Q$ are monic).
It follows that $\sigma(X) = XU_1$.
Denote by $a\in R^\times$ the constant term of $U_1$.
Since $\sigma$ has order $m$, we have that $X={\sigma}^m(X)={U_1}\sigma({U_1})\cdots\sigma^{m-1}({U_1}) X$, so that $a\sigma(a)\cdots\sigma^{m-1}(a)\equiv 1$ modulo $(\pi,X,Y)$.
Now let $u$ be the multiplicative representative in $R$ of the image of $a$ in the residue field $k$ of $K$.
Since the choice of the multiplicative representatives commutes with automorphisms, we have $u\sigma(u)\cdots\sigma^{m-1}(u)= 1$ in $R$.
Setting
\[
\resizebox{1\textwidth}{!}{$
X'
= 
X
+
u^{-1}\sigma(X)
+
u^{-1}\sigma(u^{-1})\sigma^2(X)
+
\dots
+
u^{-1}\sigma(u^{-1})\cdots \sigma^{m-2}(u^{-1}) \sigma^{m-1}(X)
$}
\] 
we have that $\sigma(X')=u X'$. 
Moreover $X'$ is divisible by $X$ because every term of the sum is, and ${X'}/{X}\equiv m$ modulo $(\pi,X,Y)$ because every term reduces to 1.
Since the characteristic of $k$ does not divide $m$, this implies that $X'=\eta X$ for some invertible element $\eta$ of $\OOO_V(V)$. 
Then $Y'=\eta^{-1}Y$ is such that $(X',Y')$ is a Laurent pair for $V$, and we have $\sigma(X')\sigma(Y') = \sigma(\pi^e)= \zeta^e\pi^e$, so that $\sigma(Y')=\zeta^e u^{-1} Y'$.
To prove the final claim, observe that if $\zeta$ is primitive, then $\sigma$ is trivial on $k$.
In this case $a$ reduces to a $m$-th root of unity of $k$ modulo $(\pi,X,Y)$, and therefore $u$ is the unique lift to a $m$-th root of unity of $R$.
On the other hand, if $\zeta=1$, then $k|k^{<\overline{\sigma}>}$ is cyclic of degree $m$, and so the norm of $\bar{u}$ relative to the extension $k|k^{<\overline{\sigma}>}$ is unitary.
It then follows from Hilbert's Theorem 90 (see for example \cite[VI, \S6, Theorem 6.1]{Lang}) that there exists $b\in k^\times$ such that $b=\overline{\sigma(b)}\bar{u}$. 
Taking a multiplicative representative $v$ of $b$ in $R^\times$, we find that $v=u\sigma(v)$.
Hence, if we set $X''=vX'$, we have $\sigma(X'')=\sigma(v)uX'=vX'=X''$.
After setting $Y''=v^{-1} Y'$, we obtain a Laurent pair $(X'', Y'')$ such that the action of $\sigma$ is the identity on the coordinates.
\end{proof}

We conclude the section with a similar linearization result in the case of involutions that switch the branches of $V$.

\begin{proposition}\label{proposition_linearization_switched_branches}
Let $V$ be a $K$-analytic annulus of modulus $e$, let $\sigma$ be an automorphism of $V$ of order $2$ that acts on $\pi$ as the multiplication by an element $\zeta$ of $\{\pm 1\}$, and assume that $\sigma$ switches the branches of $V$.
Then $V$ is either an open or a closed annulus, $\zeta^e=1$, and there exist a unit $u$ of $R$ that is invariant under $\sigma$ and elements $X,Y$ of $\calO^\circ_V(V)$ such that 
\begin{align*}
\sigma(X) = &\, Y,\\ 
\sigma(Y) = &\, X,
\end{align*}
together with an isomorphism
\[
\calO^\circ_V(V) \cong \frac{R[[X,Y]]}{(XY - u \pi^e)}
\]
whenever $V$ is an open annulus, and 
\[
\calO^\circ_V(V) \cong \frac{R\{X,Y\}}{(XY - u \pi^e)}
\]
whenever $V$ is a closed annulus.
\end{proposition}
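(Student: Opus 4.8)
The strategy is to mimic the proof of Proposition~\ref{proposition_linearization}, adapting it to the branch-switching situation. First I would record the basic numerical constraints. Let $(X_0,Y_0)$ be any Laurent pair for $V$. Since $\sigma$ switches the branches, $\big(\sigma(Y_0),\sigma(X_0)\big)$ is (up to units) a Laurent pair with the roles of the two components exchanged, so reading off the boundary valuations as in the previous proof we get $\eta_{X_0}\big(\sigma(X_0)\big)=e$, $\eta_{Y_0}\big(\sigma(X_0)\big)=0$, and $v_{Y_0}\big(\sigma(X_0)\big)=\nu_{Y_0}\big(\sigma(X_0)\big)=1$, with the symmetric statements for $\sigma(Y_0)$. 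Applying the relation $\sigma^2=\mathrm{id}$ to $\pi$ gives $\zeta^2=1$, which is automatic, and applying it to $X_0Y_0=\pi^e$ gives $\zeta^e\pi^e=\sigma^2(\pi^e)$, consistent; the genuine constraint $\zeta^e=1$ will come out below from comparing $\sigma(X)\sigma(Y)$ with $\sigma(\pi^e)$. A preliminary remark I would make is that $V$ cannot be semi-open: a semi-open annulus has a canonical reduction $k[X][[Y]]/(XY)$ whose two components are not interchangeable by any automorphism (one component is a line, the other a punctured formal disc), so an automorphism that swaps the two irreducible components of the special fiber forces the presentation to be symmetric, i.e. $V$ is open or closed.

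Next I would produce the symmetric Laurent pair. By Weierstrass preparation (Proposition~\ref{proposition_weierstrass_preparation}) applied to $\sigma(X_0)$ read as a function of $Y_0$ — using the valuation data above, so the relevant exponent $\alpha$ equals $\nu_{Y_0}\big(\sigma(X_0)\big)=1$ in the closed case and $v_{Y_0}\big(\sigma(X_0)\big)=1$ in the open case, hence $\alpha=1$ either way — we get $X_0\,\sigma(X_0)=\pi^e P(Y_0) U$ with $P\in R[Y_0]$ monic and $U$ a unit; symmetrically $Y_0\,\sigma(Y_0)=\pi^e Q(X_0) U'$. Multiplying and using $\sigma(X_0)\sigma(Y_0)=\zeta^e\pi^e$ forces $P(Y_0)Q(X_0)$ to be a unit, hence $P=Q=1$ since both are monic, so $\sigma(X_0)=Y_0 U$ and $\sigma(Y_0)=X_0 U'$ for units $U,U'$; combined with $XY=\pi^e$ this gives $\zeta^e\pi^e=\pi^e UU'$, so $UU'$ is a unit congruent to $\zeta^e$, and in particular $\zeta^e=1$ by looking modulo the maximal ideal (comparing with the reductions, which are the identity up to a scalar on the swapped components). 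Now I would symmetrize: set
\[
X = X_0 + \sigma^{-1}(\text{rescaled }X_0) \quad\text{i.e.}\quad X = X_0 + \lambda\,\sigma(Y_0)
\]
for a suitable constant $\lambda\in R^\times$ chosen exactly as in Proposition~\ref{proposition_linearization} (multiplicative representative of the constant term of the relevant unit, so that the $\sigma^2=\mathrm{id}$ cocycle relation makes $X/X_0$ reduce to $2\not\equiv 0$ in $k$), and set $Y=\sigma(X)$. Then $\sigma(X)=Y$, $\sigma(Y)=\sigma^2(X)=X$ by construction, and $X/X_0$ is a unit of $\OOO_V(V)$ because it reduces to a unit (here one uses $\Char k\neq 2$). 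Hence $(X,Y)$ is again a Laurent pair with $XY=u\pi^e$ for some unit $u$, and $u$ is $\sigma$-invariant because $\sigma(XY)=YX=XY$ while $\sigma(\pi^e)=\zeta^e\pi^e=\pi^e$, so $\sigma(u)=u$. Finally $\OOO_V(V)\cong R[[X,Y]]/(XY-u\pi^e)$ in the open case and $R\{X,Y\}/(XY-u\pi^e)$ in the closed case by Lemma~\ref{lemma_canonical_model}, since $\Spf\OOO_V(V)$ is the canonical model and $(X,Y)$ generate it.

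The main obstacle I anticipate is making the symmetrization step fully rigorous: one must verify that the "averaged" element $X=X_0+\lambda\sigma(Y_0)$ (or whatever precise combination makes $\sigma$ a genuine swap rather than a swap-up-to-unit) is actually divisible by $X_0$ with unit quotient in $\OOO_V(V)$, and that it is not a unit multiple of $Y_0$ instead — i.e. that the two branches do not get accidentally identified. This is where the hypothesis $\Char k\neq 2$ (equivalently, that the order $2$ of $\sigma$ is prime to the residue characteristic) is essential, just as divisibility by $m$ is avoided in Proposition~\ref{proposition_linearization}. A secondary technical point is handling the open case, where one cannot apply the closed-annulus Weierstrass statement directly; but Proposition~\ref{proposition_weierstrass_preparation} is stated for open annuli as well, so this is a matter of tracking which exponent $\alpha$ ($v_X$ versus $\nu_X$) appears, and in the present situation both equal $1$ because of the branch-switching valuation data, so the two cases run in parallel. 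Everything else (the cocycle computation forcing $u$ to be $\sigma$-invariant, the identification of $\OOO_V(V)$ via Lemma~\ref{lemma_canonical_model}) is a routine adaptation of the arguments already given.
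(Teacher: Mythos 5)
Your argument tracks the paper's proof up to the point where Weierstrass preparation (Proposition~\ref{proposition_weierstrass_preparation}) yields $\sigma(X_0)=U_1Y_0$ and $\sigma(Y_0)=U_2X_0$ with $U_1,U_2$ units of $\calO^\circ_V(V)$ satisfying $\sigma(U_1)U_2=1$ and $U_1U_2=\zeta^e=1$; the exclusion of the semi-open case and the deduction that $\zeta^e=1$ are also essentially as in the paper. The genuine gap is in your symmetrization step. Setting $Y=\sigma(X)$ makes the relations $\sigma(X)=Y$ and $\sigma(Y)=X$ hold for free (this is just $\sigma^2=\mathrm{id}$), so the entire content of the step lies elsewhere: one must arrange that $XY$ equals $\pi^e$ times a unit \emph{of $R$}, not merely a unit of $\calO^\circ_V(V)$. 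With your choice $X=X_0+\lambda\sigma(Y_0)=X_0(1+\lambda U_2)$ one gets
\[
XY=X_0Y_0\cdot U_1\,(1+\lambda U_2)\,\sigma(1+\lambda U_2)=\pi^e\cdot U_1(1+\lambda U_2)\,\sigma(1+\lambda U_2),
\]
and the factor $U_1(1+\lambda U_2)\sigma(1+\lambda U_2)$ is a $\sigma$-invariant unit of $\calO^\circ_V(V)$, but there is no reason for it to be a constant for any choice of the scalar $\lambda$. The additive averaging of Proposition~\ref{proposition_linearization} solves an additive cocycle condition; here the obstruction is multiplicative: one needs a unit $\eta$ of $\calO^\circ_V(V)$ and a $\sigma$-invariant unit $v$ of $R$ with $\eta\,\sigma(\eta)=vU_1^{-1}$, after which $X=v^{-1}\eta X_0$, $Y=\eta^{-1}Y_0$ does the job. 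Solving this norm equation is the actual heart of the proposition, and it is absent from your proposal; your sentence ``hence $(X,Y)$ is again a Laurent pair with $XY=u\pi^e$ for some unit $u$'' is exactly the unproved claim.

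The paper resolves it by first normalizing (at the cost of relaxing $X_0Y_0=\pi^e$ to $X_0Y_0=\xi\pi^e$ with $\xi\in R^\times$, via a lifting lemma of Henrio) so that $U_1=(1+\pi U)^{-1}$ and $U_2=1+\pi U$ with $\sigma(\pi U)=\pi U$, and then running a successive approximation modulo powers of $\pi$: at each stage the $\sigma$-invariant error term is decomposed as $b+f+\sigma(f)$ with $b\in\pi^{n+1}R$ fixed by $\sigma$ and $f\in\pi^{n+1}\calO^\circ_V(V)$, the approximants $\eta_n$ and $v_n$ are corrected by $-f$ and $+b$ respectively, and passing to the limit produces $\eta$ and $v$. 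You do flag that something must be verified at the symmetrization step, but you locate the difficulty in the divisibility of $X$ by $X_0$ with unit quotient (which is in fact immediate, since $\sigma(Y_0)$ is a unit multiple of $X_0$), rather than in the constancy of $XY/\pi^e$, which is where the real work — and the only nontrivial use of the structure of $\calO^\circ_V(V)$ — lies.
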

\begin{proof}
Let $(X,Y)$ be a Laurent pair for $V$.
First of all observe that $V$ cannot be semi-open, because the generic points of the two irreducible components of the canonical reduction of a semi-open annulus, which is the formal spectrum of $k[X][[Y]]/(XY)$, cannot be exchanged by an automorphism.
The following proof applies both to the open and to the closed case.
Using Proposition \ref{proposition_weierstrass_preparation}, one can write $\sigma(X)=U_1 Y$ and $\sigma(Y)=U_2 X$ for some units $U_1$ and $U_2$ of ${\OOO_V(V)}$.
We have $X= \sigma\big(\sigma(X)\big) = \sigma(U_1) U_2 X$, so that $\sigma(U_1) U_2 = 1$.
Morever, $\zeta^e \pi^e = \sigma(XY)=\sigma(X)\sigma(Y)=U_1 U_2 \pi^e$, then yielding $U_1 U_2=\zeta^e$.
Therefore, when $\zeta=-1$ the integer $e$ must be even.
Indeed, in this case $\sigma$ fixes $k$ and so, modulo $(\pi, X, Y)$, we have $1\equiv \sigma(U_1)U_2 \equiv U_1U_2 \equiv \zeta^e$. 
In particular, $U_1U_2=1$ regardless of what $\zeta$ is.
Up to performing a change of variables and replacing the condition that $(X,Y)$ is a Laurent pair with the weaker assumption that $XY=\xi \pi^e$ for some unit $\xi \in R^\times$, we can suppose that $U_1 \equiv U_2 \equiv 1$ mod $\pi$.
Indeed, if we call $x$, $y$, and $u_1$ the reductions modulo $\pi$ of $X$, $Y$, and $U_1$ respectively, we can replace $x$ with $u_1^{-1}x$ in order to get $\overline{\sigma}(x)=y$ and $\overline{\sigma}(y)=x$.
An easy computation (see \cite[Lemme 2.3.]{Henrio1999}) shows the existence of lifts $X$ and $Y$ of $x$ and $y$ to $\calO^\circ_V(V)$, and of a lift $\xi$ of $\overline{u_1}^{-1}$ to $R^\times$ such that $XY= \xi \pi^e$.
Hence, recalling that $U_1U_2=1$, this change of variables allows us to write $U_1= (1+ \pi U)^{-1}$ and $U_2 = (1+ \pi U)$ for some $U$ in ${\OOO_V(V)}$.
It follows that $\sigma(\pi U)= \pi U$.\\
Our goal is to find a unit $\eta$ of ${\OOO_V(V)}$ such that $\eta \sigma(\eta) = vU_1^{-1}$ for some unit $v$ of $R$ satisfying $\sigma(v)=v$. 
Indeed, once we have done so, we can set $X'=v^{-1} \eta X$, and $Y'=\eta^{-1}Y$, so that these new variables satisfy 
\[
\sigma(X')= v^{-1} \sigma(\eta) U_1 Y= \eta^{-1} Y = Y'
\] 
and 
\[
\sigma(Y')=\sigma(\eta)^{-1} U_1^{-1} X = v^{-1}\eta X = X'.
\]
Moreover, we have $X'Y'=v^{-1}\eta X \eta^{-1} Y= v^{-1} XY = v^{-1} \xi \pi^e=u\pi^e$, where $u=v^{-1} \xi$, and since $\sigma(u)=u$ this would conclude the proof of the theorem.
\\
Therefore, it remains to find such $\eta$ and $v$.
In order to do so, we proceed by successive approximations, constructing sequences of functions $(\eta_n)_{n\in\N}$ in ${\OOO_V(V)}^\times$ and elements $(v_n)_{n\in\N}$ of $R^\times$ such that
\[
\eta_n\sigma(\eta_n)-v_n(1+\pi U) \equiv 0 \; \; \mod \pi^{n+1}
\]
for every $n\in\N$.
We first set $\eta_0=v_0=1$, and suppose that we defined successfully $v_n$ and $\eta_n$.
We then write $\eta_n\sigma(\eta_n) - v_n(1+\pi U) = \pi^{n+1}g(X,Y)$ in $\OOO_V(V)$.
Since $\sigma\big(\eta_n\sigma(\eta_n)\big)=\eta_n\sigma(\eta_n)$, $\sigma(v_n)=v_n$, and $\sigma(\pi U)= \pi U$, then $\pi^{n+1} g(X,Y)$ is invariant by $\sigma$ as well, and so we can write 
\[
\pi^{n+1} g(X,Y) \equiv  b+ f(X,Y) + \sigma\big(f(X,Y)\big) \mod \pi^{n+2}
\]
for some function $f$ in $\pi^{n+1} \OOO_V(V)$ and some element $b$ of $\pi^{n+1} R$ such that $\sigma(b)=b$.
Now set $\eta_{n+1}=\eta_n - f \; \text{ and } \; v_{n+1} = v_n + b.$
Then, modulo $\pi^{n+2}$ we have 
\begin{align*}
\eta_{n+1}\sigma(\eta_{n+1}) &\equiv (\eta_n- f)(\sigma(\eta_n)- \sigma(f))\\
 & \equiv \eta_n \sigma(\eta_n)-f \sigma(\eta_n)-\sigma(f)\eta_n \\
 & \equiv v_n(1+\pi U) + \pi^{n+1} g - f -\sigma(f)\\
 & \equiv v_n(1+\pi U) + b\\
 & \equiv v_{n+1}(1+\pi U). 
\end{align*}
The limits $\eta= \lim \eta_n$ and $v = \lim v_n$ exist in ${\OOO_V(V)}^\times$ and $R^\times$ respectively, and satisfy $\sigma(v)=v$ and $\eta \sigma(\eta) = v(1+\pi U)$, as desired.
\end{proof}



\section{Fractional annuli}
\label{section_fractional_annuli}

A closed $K$-analytic annulus of modulus $e$ is isomorphic to the subspace of $\A^{1, \mathrm{an}}_K$ defined by the inequalities $|\pi|^e \leq |X| \leq 1$, where $X$ is a coordinate of the analytic affine line. 
In this section, we consider a more general class of $K$-analytic spaces, that of annuli of fractional moduli, subspaces of $\A^{1, \mathrm{an}}_K$ defined by inequalities such as $|\pi|^\beta \leq |X| \leq |\pi|^\alpha$ with $\alpha, \beta \in \Q$.
\medskip

A $K$-analytic space $V$ is said to be a \emph{fractional open annulus} (or \emph{fractional semi-open annulus}, or \emph{fractional closed annulus}) if it is a semi-affinoid space having a model of the form $\Spf A$, with $A \cong R[[|\pi|^{-\alpha} X, |\pi|^{\beta} Y]]/(XY-1)$ (resp. $A \cong R\{|\pi|^{-\alpha} X\}[[|\pi|^{\beta} Y]]/(XY-1)$, or $A \cong R\{|\pi|^{-\alpha} X, |\pi|^{\beta} Y\}/(XY-1)$) for some rational numbers $\alpha, \beta$ such that $\alpha < \beta$.
  
When there is no need to specify the type (open, semi-open, or closed) of such a fractional annulus, we simply denote it by $V_{\alpha,\beta}$, and call it a fractional annulus of radii $|\pi|^\beta<|\pi|^\alpha$.
The \emph{modulus} of a fractional annulus $V_{\alpha,\beta}$ is the positive rational number $\beta-\alpha$.

\begin{remark}\label{remark_fractional_annuli_intrinsic_modulus}
Let $V_{\alpha,\beta}$ be a fractional $K$-annulus of radii $|\pi|^\beta<|\pi|^\alpha$, and let $\rho$ be a positive integer such that $\rho\alpha$ and $\rho\beta$ are both integers.
Then the base change of $V$ to the totally ramified degree $\rho$ extension $K(\pi^{\sfrac{1}{\rho}})$ of $K$ is a $K(\pi^{\sfrac{1}{\rho}})$-analytic annulus of the same type as $V_{\alpha,\beta}$ and of modulus $\rho(\beta-\alpha)$.
In particular, it follows from Remark~\ref{remark_modulus_intrinsic} that two isomorphic fractional annuli have the same modulus.
\end{remark}

\begin{remark}\label{remark_fractional_annuli_arent_annuli}
Annuli are clearly fractional annuli, but not every fractional annulus whose modulus is an integer is an annulus. 
For example, the open fractional annulus of radii $|\pi|^{\sfrac{1}{2}} < |\pi|^{-\sfrac{1}{2}}$ has modulus 1, but since it contains $K$-rational points it cannot be isomorphic to an open annulus of modulus 1.
In particular, this shows that two fractional annuli that have the same modulus are not necessarily isomorphic. 
\end{remark}

\begin{remark}
Let $a, b, a', b'$ be integers such that $(a,b)=(a', b')=1$ and $a/b < a'/b'$.
Observe that the special $R$-algebra that we used to construct the open fractional annulus of radii $|\pi|^{\sfrac{a'}{b'}} < |\pi|^{\sfrac{a}{b}}$, as defined in equation \eqref{notation_fractional_special_algebras}, is
\[
\resizebox{1\textwidth}{!}{$
\frac{R\big[\big[|\pi|^{-\alpha} X, |\pi|^{\beta} Y \big]\big]}{(XY-1)} = \frac{R[[U,V,W,Z]]}{\big (\pi^{b\lceil a/b \rceil -a} V - U^b, \pi^{{b'} \lceil {a'/b'} \rceil -{a'}} Z- W^{b'}, UW - \pi^{-\lceil a/b \rceil-\lceil a'/b' \rceil} \big )}.
$}
\]
The reader should be aware that this algebra is not flat in general, so the canonical reduction of the fractional annulus is not simply obtained by reduction modulo $\pi$ of the above equations.
For example, the open fractional annulus defined by $|\pi|^{\sfrac{1}{b}} < |X| < 1$ has model $\Spf A$, with
\[\
A= \frac{R[[W,Z,Y]]}{\big (\pi^{b-1} Z- W^b, WY - \pi \big )}.
\]
In $A$ we have $\pi^{b-1} (W-ZY^{b-1}) = 0$, and the canonical model of this fractional annulus is the formal spectrum of
\[
\frac{R[[W,Z,Y]]}{\big (\pi^{b-1} Z- W^b, WY - \pi, W-ZY^{b-1} \big )}= \frac{R[[Z,Y]]}{\big (\pi - ZY^b \big )},
\] 
hence its canonical reduction is the formal spectrum of ${k[[Z,Y]]}/{\big ( ZY^b \big )}$.
\end{remark}

We can be more precise than in Remarks~\ref{remark_fractional_annuli_intrinsic_modulus} and \ref{remark_fractional_annuli_arent_annuli}, and describe the moduli space of fractional annuli over $K$.
This is the content of the next proposition.
Consider the map
\begin{align*}
\Phi \colon \mbox{\{Fractional annuli over $K$ \}} & \longrightarrow  \Q_{>0} \times  \bigslant{\Q}{\Z}\\
 V_{\alpha, \beta} & \longmapsto (\beta-\alpha, \overline{\alpha})
\end{align*}
that sends a fractional annulus $V_{\alpha, \beta}$ of radii $|\pi|^\beta<|\pi|^\alpha$ to the pair consisting of its modulus and the class $\overline{\alpha}$ of $\alpha$ modulo $\Z$.

\begin{theorem}\label{theorem_moduli_space_fractional_annuli}
Let $P\in\{\mbox{open, semi-open, closed}\}$ denote a type of fractional annuli.
Then the map $\Phi$ induces a bijection
\[ 
\Phi \colon \bigg\{\,\parbox{14em}{Isomorphism classes of fractional \\ annuli of type $P$ over $K$} \, \bigg\} \stackrel{\sim}{\longrightarrow} \quot{\Big(\Q_{>0} \times \bigslant{\Q}{\Z}\Big)}{\sim_P}
\]
where $\sim_P$ is the equivalence relation on $\Q_{>0} \times \bigslant{\Q}{\Z}$ generated by the relations of the form $(a,b) \sim_P (a, a-b)$ if $P\in\{\mbox{open, closed}\}$, while $\sim_{\mbox{\tiny{semi-open}}}$ is the trivial equivalence relation.
\end{theorem}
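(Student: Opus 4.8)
The plan is to check separately that $\Phi$ descends to a well-defined map on isomorphism classes, that the induced map is surjective, and that it is injective; within a fixed type $P$ every isomorphism of fractional annuli preserves the type (for instance, the associated analytic space is compact exactly for closed annuli), so all three steps take place in one fixed column of the statement. The only substantial input is a description of the isomorphisms between fractional annuli.

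\emph{Well-definedness.} Suppose $\phi\colon V_{\alpha,\beta}\cong V_{\alpha',\beta'}$ is an isomorphism of fractional annuli of type $P$. By Remark~\ref{remark_fractional_annuli_intrinsic_modulus} the moduli agree, $\beta-\alpha=\beta'-\alpha'$. Realize both spaces as subdomains $\{|\pi|^{\beta}<|X|<|\pi|^{\alpha}\}$ and $\{|\pi|^{\beta'}<|X'|<|\pi|^{\alpha'}\}$ of $\A^{1,\mathrm{an}}_K$. Then $\phi^{*}(X')$ is an invertible analytic function on $V_{\alpha,\beta}$, and I would first record the shape of such functions: the map sending $r$ to the valuation of $\phi^{*}(X')$ at the point with $|X|=|\pi|^{r}$ is concave and piecewise affine in $r\in(\alpha,\beta)$, with no break precisely because $\phi^{*}(X')$ is zero-free; hence $\phi^{*}(X')=a\,X^{n}\,(1+g)$ with $a\in K^{\times}$, $n\in\Z$, and $\sup_{V_{\alpha,\beta}}|g|<1$. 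Applying the same to $(\phi^{-1})^{*}(X)$ and composing shows $n=\pm1$. Comparing the range of $|\phi^{*}(X')|=|a|\,|X|^{n}$ over $V_{\alpha,\beta}$ with the range of $|X'|$ over $V_{\alpha',\beta'}$ then gives $\alpha'-\alpha,\beta'-\beta\in\Z$ when $n=1$ (so $\overline\alpha=\overline{\alpha'}$), and the reflected relation when $n=-1$. When $P$ is semi-open the case $n=-1$ cannot occur: the canonical reduction is $\Spf\big(k[X][[Y]]/(XY)\big)$, whose two components are a line and a formal disc and cannot be interchanged by an automorphism (this is the observation opening the proof of Proposition~\ref{proposition_linearization_switched_branches}). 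Hence $\Phi$ induces a well-defined map to $\big(\Q_{>0}\times\Q/\Z\big)/\!\sim_P$.

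\emph{Surjectivity and injectivity.} Surjectivity is immediate: for $(a,\overline c)$ with $c\in\Q$ a representative of $\overline c$, the fractional annulus $V_{c,\,c+a}$ of type $P$ satisfies $\Phi(V_{c,c+a})=(a,\overline c)$. For injectivity it suffices, since $\sim_P$ is generated by the listed relations and since isomorphism of fractional annuli is an equivalence relation, to produce isomorphisms realizing each generator. If $\Phi(V_{\alpha,\beta})=\Phi(V_{\alpha',\beta'})$ then $\alpha-\alpha'\in\Z$ and $\beta-\beta'\in\Z$, and the homothety $X\mapsto\pi^{\alpha'-\alpha}X$ of $\A^{1,\mathrm{an}}_K$ carries $V_{\alpha,\beta}$ isomorphically onto $V_{\alpha',\beta'}$, preserving the type. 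If $P\in\{\text{open},\text{closed}\}$, the inversion $X\mapsto X^{-1}$ carries $V_{\alpha,\beta}$ isomorphically onto $V_{-\beta,-\alpha}$, which is again of type $P$ and whose image under $\Phi$ is the $\sim_P$-partner of $\Phi(V_{\alpha,\beta})$; for semi-open annuli no such identification is needed, as $\sim_{\text{semi-open}}$ is trivial. These two families of isomorphisms realize all generating relations, so the induced map is injective; together with the previous steps this yields the bijection.

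\emph{Main obstacle.} The crux is the structural statement used for well-definedness: that every $K$-analytic isomorphism between fractional annuli is, up to reparametrizing the coordinate by a unit congruent to $1$, a monomial map $X\mapsto aX^{\pm1}$. I would obtain this either from the concavity of the valuation of a zero-free function along the skeleton together with the fact that an isomorphism acts isometrically on skeletons (forcing the slope to be $\pm1$), or, after a totally ramified base change $K(\pi^{1/\rho})\,|\,K$ along which $V_{\alpha,\beta}$ and $V_{\alpha',\beta'}$ become honest annuli (Remark~\ref{remark_fractional_annuli_intrinsic_modulus}), from Proposition~\ref{proposition_weierstrass_preparation} applied to the zero-free function $\phi^{*}(X')$, whose associated Weierstrass polynomial is then trivial. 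In the second route one should be careful, since the base change need not be Galois, and organize the argument so that only the resulting numerical relations among $\alpha,\beta,\alpha',\beta'$ are descended, rather than the isomorphism itself.
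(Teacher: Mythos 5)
Your argument is correct in structure and proves the substantive direction --- that an isomorphism of fractional annuli forces the numerical relation --- by a genuinely different route from the paper's. You analyse the pullback $\phi^{*}(X')$ as a unit on the source annulus: the absence of critical radii for a zero-free function gives $\phi^{*}(X')=aX^{n}(1+g)$ with $|g(x)|<1$ pointwise, composing with $\phi^{-1}$ forces $n=\pm1$, and comparing the ranges of $|X'|$ and of $|a|\,|X|^{n}$ yields the congruences on $\alpha,\beta$. The paper instead argues geometrically: since an isomorphism preserves $K$-rational points it carries the largest integral sub-annulus $V_1$ onto its counterpart $V_2$, hence matches the connected components of the complements, which are fractional annuli whose moduli are the fractional parts of $\alpha$ and of $\beta$; equating these moduli (an isomorphism invariant by Remark~\ref{remark_fractional_annuli_intrinsic_modulus}) gives the same constraints. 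Your route requires the unit-structure statement (which does follow from Proposition~\ref{proposition_weierstrass_preparation}, or from the Newton-polygon discussion preceding it) but delivers more, namely that every isomorphism is monomial up to a $1$-unit; the paper's route avoids function theory entirely but is terser about why $V_1$ is intrinsically characterized. The easy direction (constructing homotheties and inversions) and surjectivity are handled identically in both.

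Two points to tighten. First, your exclusion of $n=-1$ in the semi-open case invokes the canonical reduction of an integral semi-open annulus; for fractional ones either base-change first, or argue directly that for $n=-1$ the range of $|a|\,|X|^{-1}$ attains its infimum but not its supremum, whereas the range of $|X'|$ on a semi-open annulus in standard form does the opposite. Second, and more substantively, carry the $n=-1$ bookkeeping to the end rather than asserting it gives ``the reflected relation'': with the normalization of Section~\ref{section_fractional_annuli} (radii $|\pi|^{\beta}<|\pi|^{\alpha}$, second invariant $\overline{\alpha}$), inversion sends $V_{\alpha,\beta}$ to $V_{-\beta,-\alpha}$, whose invariant is $\overline{-\beta}=\overline{-\alpha-(\beta-\alpha)}$; the relation your computation realizes is therefore $(a,b)\sim(a,-a-b)$, which agrees with the displayed $(a,b)\sim(a,a-b)$ only when $2a\in\Z$. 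The mismatch originates in the statement itself (the paper's own proof tacitly switches to the normalization with outer radius $|\pi|^{-\alpha}$, under which $(a,b)\sim(a,a-b)$ is the correct relation), but your write-up claims the match without verifying it; a fully careful version of your argument would detect this and force one to fix either the normalization of $\Phi$ or the form of $\sim_P$.
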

\begin{proof}
We have already observed in Remark~\ref{remark_fractional_annuli_intrinsic_modulus} that isomorphic fractional annuli have the same modulus.
Moreover, it is clear that if two fractional annuli are isomorphic then they are of the same type $P$.
Therefore, to conclude the proof it remains to establish the following claim: two fractional annuli $V_{\alpha_1, \gamma-\alpha_1}$ and $V_{\alpha_2, \gamma-\alpha_2}$ of the same type $P$ and same modulus $\gamma$ are isomorphic if and only if $(\gamma,\overline{\alpha_1})\sim_P(\gamma,\overline{\alpha_2})$.
If $\overline{\alpha_1}=\overline{\alpha_2}$, then the multiplication of the coordinate $X$ by $\pi^{\alpha_2-\alpha_1}$ gives a $K$-isomorphism $V_{\alpha_1, \gamma-\alpha_1}\stackrel{\sim}{\to}V_{\alpha_2, \gamma-\alpha_2}$ as subspaces of $\A^{1,\mathrm{an}}_K$.
Similarly, if the annuli are either open or closed and $\overline{\alpha_1}=\overline{\gamma-\alpha_2}$, a $K$-isomorphism $V_{\alpha_1, \gamma-\alpha_1}\stackrel{\sim}{\to}V_{\alpha_2, \gamma-\alpha_2}$  as subspaces of $\A^{1,\mathrm{an}}_K\setminus\{0\}$ is obtained by first sending the coordinate $X$ to its inverse $X^{-1}$, then applying a transformation as in the previous case.
This proves the ``only if'' part of the claim above.
To prove the ``if'' part of the claim, let $\varphi\colon V_{\alpha_1, \gamma-\alpha_1}\stackrel{\sim}{\to}V_{\alpha_2, \gamma-\alpha_2}$ be an isomorphism of fractional annuli over $K$.
Since $\varphi$ induces an isomorphism at the level of $K$-rational points, it also induces an isomorphism between the biggest annuli contained in the two fractional annuli:
\[
\resizebox{1\textwidth}{!}{$
	V_1=\Big\{x\in V_{\alpha_1, \gamma-\alpha_1} \Big| |\pi|^{\lceil\gamma-\alpha_1\rceil} \leq |X| \leq |\pi|^{\lfloor\alpha_1\rfloor} \Big\}
	\stackrel{\sim}{\longrightarrow}
	V_2=\Big\{x\in V_{\alpha_2, \gamma-\alpha_2} \Big| |\pi|^{\lceil\gamma-\alpha_2\rceil} \leq |X| \leq |\pi|^{\lfloor\alpha_2\rfloor} \Big\},
$}
\]
and therefore $\varphi$ is an isomorphism between the complements $V_{\alpha_1, \gamma-\alpha_1}\setminus V_1$ and $V_{\alpha_2, \gamma-\alpha_2}\setminus V_2$.
Assume that the annuli are either open or closed and that $\alpha$ is not an integer; the remaining cases can be treated similarly and are left to the reader.
Observe that $V_{\alpha_1, \gamma-\alpha_1}\setminus V_1$ has a connected component $C$ that is a fractional annulus of modulus $\alpha_1 - \lfloor\alpha_1\rfloor$.
On the other hand, the connected components of $V_{\alpha_1, \gamma-\alpha_1}\setminus V_1$ are two fractional annuli of moduli $\alpha_2 - \lfloor\alpha_2\rfloor$ and $\gamma-\alpha_2 - \lceil\gamma-\alpha_2\rceil$, so $C$ has to be isomorphic to either of the two.
This implies that $\alpha_1 - \lfloor\alpha_1\rfloor$ equals either $\alpha_2 - \lfloor\alpha_2\rfloor$ or $\gamma-\alpha_2 - \lceil\gamma-\alpha_2\rceil$, which means precisely that $(\gamma,\overline{\alpha_1})\sim_P(\gamma,\overline{\alpha_2})$.
\end{proof}

\begin{remark}
As can be seen from the proof above, from the point of view of Berkovich spaces if $V_{\alpha,\beta}$ is a fractional annulus then $\overline{\alpha}$ measures the length of the segment of the skeleton of $V_{\alpha, \beta}$ that connects the boundary point corresponding to the component $(X)$ of the canonical reduction of $V_{\alpha,\beta}$ to the closest point of the skeleton onto which a $K$-rational point retracts.
\end{remark}


\section{Tame forms of annuli}
\label{section_forms_annuli}

We say that a $K$-analytic space $V$ is a \emph{$K$-form of an annulus} if there exists a finite extension $K'|K$ such that $V'=V\otimes_KK'$ is an annulus.
We then say that the extension $K'|K$ \emph{trivializes} the form $V$.
As we observed in Remark~\ref{remark_fractional_annuli_intrinsic_modulus}, fractional annuli over $K$ are $K$-forms of annuli; we say that a $K$-form of an annulus $V$ is itself \emph{trivial} if it is isomorphic to a fractional annulus over $K$.

In this section we apply Theorem~\ref{theorem_descent_model} and the results of Section~\ref{section_annuli} to classify forms of annuli for a large class of Galois extensions that includes totally tamely ramified extensions.
In particular, in Theorem~\ref{theorem_annuli} we show that all such forms are trivial as long as the action induced by the Galois group of $K'|K$ fixes the branches of the $K'$-annulus $V'$.
On the other hand, in Theorem~\ref{theorem_forms_switched_branches} we show that up to isomorphism there exists only one non-trivial form of an annulus of given modulus (except in the semi-open case, where all forms are trivial) trivialized by a quadratic extension that switches the branches of $V'$.

Observe that the corresponding problem for discs is well understood, since all tame forms of open polydiscs are trivial by \cite{Ducros13}, and all tame forms of closed discs are trivial by \cite{Schmidt15}. 
Our methods also allow to retrieve easily those results in dimension 1 for the extensions that we consider.
On the other hand, the triviality of forms of annuli when the Galois group fixes the branches has also recently been obtained by Chapuis, with different techniques, in \cite{Chapuis2017}. 
\medskip

We begin by studying the case when the action induced by the Galois group on the trivialized annulus $V\otimes_KK'$ fixes its branches.

\begin{theorem}\label{theorem_annuli}
Let $K'$ be a Galois extension of $K$ of degree $m$ and ramification index $\rho$ such that $K$ contains all $\rho$-th roots of unity, its residue characteristic does not divide $m$, and the residual extension $k'|k$ is solvable.
Let $V$ be a $K$-form of an annulus trivialized by $K'|K$.
Then, if every element of the Galois group of $K'|K$ fixes the branches of $V\otimes_KK'$, the form $V$ is trivial.
\end{theorem}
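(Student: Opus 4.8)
The plan is to apply the descent theorem, Theorem~\ref{theorem_descent_model}, with the canonical model, and to compute the resulting $G$-fixed locus explicitly; the branch-fixing hypothesis is what makes the Galois action computable, via the linearization of Proposition~\ref{proposition_linearization}. Write $G=\Gal(K'|K)$ and $V'=V\otimes_KK'$, let $\X'=\Spf\big(\OOO_{V'}(V')\big)$ be the canonical model of $V'$, and recall that the semilinear $G$-action on $V'$ extends to $\X'$ by Remark~\ref{remark_extension_action_canonical_model}. By Theorem~\ref{theorem_descent_model} it then suffices to show that $\big(\big(\prod\nolimits_{R'|R}^{\mathrm{dil}}\X'\big)^G\big)^\beth$ is a fractional annulus over $K$; moreover, exactly as in the proof of that theorem, the compatibility of the induced action on the dilated Weil restriction together with the two-step computation of the fixed locus along the tower $K\subseteq K^{\mathrm{ur}}\subseteq K'$ — where $K^{\mathrm{ur}}$ is the maximal unramified subextension, so that $K'|K^{\mathrm{ur}}$ is cyclic totally tamely ramified of degree $\rho$ and $\Gal(K^{\mathrm{ur}}|K)\cong\Gal(k'|k)$ is solvable — reduce us to two cases, treated in the next two paragraphs.

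\textbf{Totally tamely ramified descent.} Suppose $K'|K$ is totally tamely ramified of degree $\rho$, so $G=\langle\tau\rangle$ is cyclic and, for a uniformizer $\varpi$ of $R'$ with $\varpi^\rho=\pi$, one has $\tau(\varpi)=\zeta\varpi$ with $\zeta$ a primitive $\rho$-th root of unity. Since $\tau$ fixes the branches of $V'$, Proposition~\ref{proposition_linearization} furnishes a Laurent pair $(X,Y)$ for $V'$ with $\tau(X)=uX$ and $\tau(Y)=\zeta^{e'}u^{-1}Y$, where $e'$ is the modulus of $V'$ and $u$ is a $\rho$-th root of unity; as $K\supseteq\mu_\rho$ we have $u\in R$, say $u=\zeta^{-s}$ with $0\le s<\rho$. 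Then $\widetilde X=\varpi^sX$ and $\widetilde Y=\varpi^tY$, with $t\equiv-(e'+s)\pmod\rho$ and $0\le t<\rho$, are $\tau$-invariant and satisfy $\widetilde X\widetilde Y=\pi^d$ for an integer $d\ge1$. A computation of $\prod\nolimits_{R'|R}^{\mathrm{dil}}\X'$ and of its $G$-fixed locus that runs parallel to case~(i) of Lemma~\ref{lemma_computation_G-fix_Weil_basechange} — expand $X=\sum_i\varpi^iX_i$ and $Y=\sum_j\varpi^jY_j$, use Proposition~\ref{proposition_dilated_restriction_explicit} to read off the scalings of the variables $X_i,Y_j$, and Lemma~\ref{lemma_representability_G-fix} for the coinvariants — shows that only the two variables corresponding to $\widetilde X$ and $\widetilde Y$ survive, with their prescribed fractional radii, and that the ideal of coefficients collapses to the single relation $\widetilde X\widetilde Y=\pi^d$; hence $\big(\big(\prod\nolimits_{R'|R}^{\mathrm{dil}}\X'\big)^G\big)^\beth$ is a fractional $K$-annulus of the same type as $V'$ and of modulus $e'/\rho$, which by Theorem~\ref{theorem_descent_model} is $V$.

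\textbf{Unramified descent.} For a general extension, the two-step decomposition first descends $V'$ along $K'|K^{\mathrm{ur}}$ — by the previous paragraph producing a fractional annulus $W$ over $K^{\mathrm{ur}}$ on which $\Gamma:=\Gal(K^{\mathrm{ur}}|K)\cong\Gal(k'|k)$ acts, fixing the branches — and then descends $W$ along the unramified extension $K^{\mathrm{ur}}|K$ with solvable group $\Gamma$. For this second step I would induct on $|\Gamma|$: choose $N\triangleleft\Gamma$ with $\Gamma/N$ cyclic of prime order $p$ (necessarily $p\mid m$, so $p$ is prime to $\Char k$ since $\Char k\nmid m$), let $L=(K^{\mathrm{ur}})^N$, apply the inductive hypothesis to the unramified extension $K^{\mathrm{ur}}|L$ with group $N$ to descend $W$ to a fractional annulus $W_1$ over $L$, and then descend $W_1$ along the unramified cyclic degree-$p$ extension $L|K$. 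For that last step the generating Galois automorphism $\sigma$ of $W_1$ fixes the branches and acts trivially on $\pi$, so the case $\zeta=1$ of Proposition~\ref{proposition_linearization}, extended to fractional annuli (see below), yields a presentation in which $\sigma$ fixes the coordinates; the canonical model of $W_1$ is then visibly of the form $\Spf(A\otimes_RR_L)$ for a special $R$-algebra $A$ carrying the natural semilinear $\Gal(L|K)$-action, and case~(ii) of Lemma~\ref{lemma_computation_G-fix_Weil_basechange} identifies the descent with $\Spf A$, i.e.\ with a fractional annulus over $K$. Combining the two descents proves the theorem.

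\textbf{Main difficulties.} The delicate points I expect are: (a) bookkeeping the type (open, semi-open, closed), the modulus and the two radii through the successive Weil restrictions and fixed loci, and checking at each stage that the algebra produced is flat and integrally closed in its generic fiber so that Lemma~\ref{lemma_canonical_model} applies and the associated $\beth$ really is a fractional annulus, rather than merely a semi-affinoid space with the correct generic fiber modulo nilpotents; and, more seriously, (b) extending the linearization of Proposition~\ref{proposition_linearization} from honest annuli to \emph{fractional} annuli, which the unramified induction forces on us since the intermediate forms are no longer honest. One can attack (b) either by rerunning the Weierstrass-preparation proof of Proposition~\ref{proposition_linearization} (which needs a version of Proposition~\ref{proposition_weierstrass_preparation} for fractional annuli, obtainable by descent from a totally tamely ramified base change that trivializes the fractional structure), or by performing that base change directly and then checking that the extra abelian ramified part of the Galois group still fixes the branches of the resulting honest annulus — a base change available precisely because $K$ contains enough roots of unity. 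Once the action is linearized, the compatibility hypothesis of Theorem~\ref{theorem_descent_model} is verified exactly as in Lemma~\ref{lemma_computation_G-fix_Weil_basechange}.
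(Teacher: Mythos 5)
Your overall strategy --- Theorem~\ref{theorem_descent_model} applied to the canonical model, linearization of the Galois action via Proposition~\ref{proposition_linearization}, and an explicit computation of the $G$-fixed locus of the dilated Weil restriction in the style of Lemma~\ref{lemma_computation_G-fix_Weil_basechange} --- is exactly the paper's, and your totally ramified computation (the twist $\widetilde X=\varpi^sX$, $\widetilde Y=\varpi^tY$ and the collapse of the ideal of coefficients to $\widetilde X\widetilde Y=\pi^d$) matches the paper's Step~(i). The one substantive divergence is the order in which you traverse the tower, and it is not innocuous. You split $K\subseteq K^{\mathrm{ur}}\subseteq K'$ and descend the \emph{ramified} part first, so your intermediate object $W$ over $K^{\mathrm{ur}}$ is already a \emph{fractional} annulus, and every subsequent unramified descent step must linearize a Galois action on a fractional annulus --- a statement that is not available in the paper (Propositions~\ref{proposition_weierstrass_preparation} and~\ref{proposition_linearization} are proved only for honest annuli) and that you flag as difficulty~(b) without closing it. The paper instead first enlarges $K'$ to $K'(\pi^{\sfrac{1}{\rho}})$ (harmless, since $\mu_\rho\subset K$ keeps the extension Galois and the enlarged group still fixes the branches) so that the tower decomposes as $K'\,|\,K(\pi^{\sfrac{1}{\rho}})\,|\,K$ with the \emph{unramified} solvable part on top; the unramified cyclic descents are performed first and each returns an honest annulus of the same modulus, and only the final totally ramified cyclic step introduces fractionality. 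This ordering makes your difficulty~(b) disappear entirely: no linearization on fractional annuli is ever needed.

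As written, then, your argument has a genuine gap at the unramified induction step: applying ``the case $\zeta=1$ of Proposition~\ref{proposition_linearization}, extended to fractional annuli'' presupposes a Weierstrass preparation and a linearization result for fractional annuli that you would have to prove. Your second proposed remedy --- base-changing the fractional annulus along a totally ramified extension to make it honest and tracking the enlarged Galois group --- essentially amounts to re-deriving the paper's decomposition, so the cleanest fix is simply to reverse the order of your tower. The bookkeeping concerns you raise in~(a) are handled in the paper by the explicit form of the coinvariant algebra (only two variables survive, with prescribed fractional radii, and a single binomial relation), which is visibly flat and integrally closed in its generic fiber.
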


\begin{example}
If $K'$ is a Galois and totally tamely ramified extension of $K$ of degree $m$, then it satisfies the conditions of Theorem~\ref{theorem_annuli}, since it is generated by an $m$-th root $\varpi$ of a uniformizer $\pi$ of $K$, and a generator of the Galois group of $K'|K$ has to act on $\varpi$ by multiplication by a primitive $m$-th root of unity $\zeta\in K'$, so that a primitive $m$-th root of unity exists in $k'=k$.
\end{example}

\begin{proof}[Proof of Theorem~\ref{theorem_annuli}]
We only treat the case when $V$ is the form of an open annulus of modulus $e$.
The cases of semi-open and closed annuli are completely analogous, only requiring to replace some power series rings by the corresponding convergent version.
Let $\varpi$ be a uniformizer of $R'$ which is a root of $\pi$, and denote by $G$ the Galois group of $K'|K$.
By Theorem~\ref{theorem_descent_model}, if $(X,Y)$ is a Laurent pair for $V'$ then $V$ is isomorphic to the $K$-analytic space associated with the special formal $R$-scheme
\[
\X=\bigg(\prod\nolimits^\mathrm{dil}_{R'|R}\Spf\Big(\frac{R'[[X,Y]]}{(XY-\varpi^e)}\Big)\bigg)^G,
\]
where $e$ is the modulus of the open annulus $V\otimes_KK'$.
We now compute explicitly the special formal $R$-scheme above.
To simplify notations, let $A$ be the ring of the dilated Weil restriction of $\Spf\big(R'[[ X,Y]]/(XY-\varpi^e)\big)$ to $R$, so that $\X\cong\Spf(A_G)$.
We divide the rest of the proof into three steps.

\emph{Step (i)}.
Assume that the extension $K'|K$ is totally ramified.  
Then $R'$ is a free $R$-module of rank $m$, with basis $\{1,\varpi,\ldots,\varpi^{m-1}\}$, and the ring $A$ is obtained as follows. 
Write
\begin{align*}
X &= X_0+X_1\varpi+\ldots+X_{m-1}\varpi^{m-1},\\
Y &= Y_0+Y_1\varpi+\ldots+Y_{m-1}\varpi^{m-1}.
\end{align*}
Then, by Proposition~\ref{proposition_dilated_restriction_explicit} we have an isomorphism
\[
A\cong \frac{R\big[\big[X_0,|\varpi| X_{1}, |\varpi|^2 X_{2}, \ldots, |\varpi|^{m-1}  X_{m-1}, Y_0, |\varpi|  Y_{1},\ldots,|\varpi|^{m-1} Y_{m-1}\big]\big]}{J},
\]
where $J=(f_0, \dots, f_{m-1})$ is the ideal generated by the coefficients of the expansion of $XY-\varpi^e$ in the basis $\{1,\varpi,\ldots,\varpi^{m-1}\}$. 
Writing $e=a+bm$ for some $0\leq a<m$ and $b\geq0$, a simple computation of $\big(X_0+\ldots+X_{m-1}\varpi^{m-1}\big)\big(Y_0+\ldots+Y_{m-1}\varpi^{m-1}\big)-\varpi^e$ yields
\[
f_i=\sum_{j=0}^i X_j Y_{i-j} + \bigg( \sum_{j=i+1}^{m-1} X_j Y_{m+i-j} \bigg)\pi - \delta_{a,i}\pi^b
\]
for every $i=0,\ldots,m-1$, where $\delta_{a,i}$ is equal to $1$ if $a=i$ and to $0$ otherwise.
Denote by $\sigma$ a generator of $G$ and let $\zeta\in R$ be the primitive $m$-th root of unity such that $\sigma(\varpi)=\zeta\varpi$.
By Proposition~\ref{proposition_linearization} we can assume that $\sigma(X)=\zeta^\alpha X$ and $\sigma(Y)=\zeta^\beta Y$ for some $0\leq \alpha,\beta \leq m-1$.
Observe that then $\alpha+\beta$ must be congruent to $e$ modulo $m$. 
It follows that, by a similar computation as the one performed in the proof of part $(i)$ of Lemma~\ref{lemma_computation_G-fix_Weil_basechange}, $\sigma(X_i)=\zeta^{\alpha-i} X_i$ and $\sigma(Y_i)=\zeta^{\beta-i} Y_i$ for every $i=0,\ldots,m-1$. 
The kernel of the surjection $A\to A_G$ is generated by the monomials $\big(1-\zeta^{\alpha-i}\big)X_i$ and $\big(1-\zeta^{\beta-i}\big)Y_i$, therefore only $X_\alpha$ and $Y_\beta$ survive in $A_G$.
It follows that $f_i=0$ for every $i\neq a$, and
\[
f_a=
\begin{cases}
X_\alpha Y_\beta-\pi^b & \text{ if }\alpha+\beta=a,\\
X_\alpha Y_\beta\pi-\pi^{b} & \text{ if }\alpha+\beta=m+a.
\end{cases}
\]
In both cases we see that $V$ is a fractional annulus, which is what we wanted to show.

\emph{Step (ii)}.
Assume that $K'|K$ is cyclic and unramified.
As in the proof of part $(ii)$ of Lemma~\ref{lemma_computation_G-fix_Weil_basechange}, we can apply the normal basis theorem (\cite[VI, \S13, Theorem 13.1]{Lang}) to find $\alpha\in R'$ such that $|\alpha|=1$ and the set $\big(\alpha, \sigma(\alpha),\ldots, \sigma^{m-1}(\alpha)\big)$ is a basis of $R'$ as a free $R$-module, where $\sigma$ is a generator of $G$.
Moreover, by dividing $\alpha$ by its trace $\sum_{i=0}^{m-1}\sigma^i(\alpha)\neq0$ we can assume that the trace of $\alpha$ is $1$, so that an element $a$ of $R$, when seen as an element of $R'$, can be expressed as $a = \sum_{i=0}^{m-1}a\sigma^i(\alpha)$ in the basis above.
Let $T$ be one of the variables among $X$ and $Y$, and write
\[
T=T_0\alpha+T_1\sigma(\alpha)+\ldots+T_{m-1}\sigma^{m-1}(\alpha),
\]
so that as before we have an isomorphism
\[
A\cong {R[[X_0, X_1, \ldots, X_{m-1}, Y_0, \ldots, Y_{m-1}]]}/{J},
\]
where $J$ is the ideal of the coefficients of $XY-\varpi^e$ expressed in the basis $\big(\alpha, \sigma(\alpha),\ldots, \sigma^{m-1}(\alpha)\big)$.
By Proposition~\ref{proposition_linearization} we can assume that 
$\sigma(T) = T$.
Since we also have $\sigma(T) = \sigma(T_0)\sigma(\alpha) + \sigma(T_1)\sigma^2(\alpha)+\ldots + \sigma(T_{m-1})\alpha$,
in the ring $A_G$ we have $X_0=X_1=\cdots=X_{m-1}$ and $Y_0=Y_1=\cdots=Y_{m-1}$, and therefore 
\[
XY-\varpi^e=\Big(X_0\sum\nolimits_i\sigma^i(\alpha)\Big)\Big(Y_0\sum\nolimits_i\sigma^i(\alpha)\Big)-\pi^e\sum\nolimits_i\sigma^i(\alpha),\]
 so that all the generators of $J$ become $X_0Y_0-\pi^e$.
This proves that $A_G\cong R[[X_0,Y_0]]/(X_0Y_0-\pi^e)$, that is $V$ is an open annulus of modulus $e$.

\emph{Step (iii)}.
Let us treat the general case.
Up to adding to $K'$ a $\rho$-th root $\pi^{\sfrac{1}{\rho}}$ of $\pi$, we can assume that the extension $K'|K$ decomposes as an extension $K'|K(\pi^{\sfrac{1}{\rho}})$ of the totally ramified extension $K(\pi^{\sfrac{1}{\rho}})|K$.
The extension $K'|K(\pi^{\sfrac{1}{\rho}})$ is Galois, unramified and solvable, and so it can be decomposed as a tower of unramified, cyclic Galois extensions.
By applying the result we proved in step $(ii)$ to each extension in this tower, we deduce that the $K(\pi^{\sfrac{1}{\rho}})$-form $V\otimes_KK(\pi^{\sfrac{1}{\rho}})$ of the $K'$-analytic annulus $V\otimes_KK'$ is itself an annulus.
Finally, the extension $K(\pi^{\sfrac{1}{\rho}})|K$ is totally ramified, cyclic Galois, because $K$ contains all $\rho$-th roots of unity, so that the result we obtained in step $(i)$ shows that the $K$-form $V$ of the annulus $V\otimes_KK(\pi^{\sfrac{1}{\rho}})$ is a fractional annulus, concluding the proof of the theorem.
\end{proof}

If $K'$ is an extension of $K$ satisfying the hypotheses of the theorem above, this result, together with Theorem~\ref{theorem_moduli_space_fractional_annuli}, classifies completely the $K$-forms of a $K'$-annulus $X$ of modulus $e$ with fixed branches.
Those are all the fractional annuli of radii $|\pi|^\beta<|\pi|^\alpha$ with $\beta-\alpha=e/\rho$ and $\alpha,\beta\in \frac{1}{\rho}\Z$, where $\rho$ is the ramification index of $K'|K$. 
In the language of group cohomology, the set of isomorphism classes of these forms is $\mathrm{H}^1\big(\Gal(K'|K),\mathrm{Aut}^\mathrm{fix}_{K'}(X)\big)$, where $\mathrm{Aut}^\mathrm{fix}_{K'}(X)$ is the normal subgroup of $\mathrm{Aut}_{K'}(X)$ that consists of the $K'$-automorphisms of $X$ fixing its branches.

If $V$ is a $K$-form of an annulus trivialized by an extension $K'|K$, and if there exists an element of the Galois group of $K'$ over $K$ that switches the branches of $V\otimes_KK'$, then the $K$-form $V$ cannot be trivial.
The next proposition shows that such non-trivial forms (both ramified and unramified) indeed exist, and classifies those that are trivialized by a quadratic extension.

\begin{theorem}\label{theorem_forms_switched_branches}
Let $K'=K\big(\sqrt a\big)$ be a quadratic extension of $K$ of ramification index $\rho\in\{1,2\}$, and assume that the residue characteristic of $K$ is different from $2$.
Let $V'$ be a $K'$-analytic annulus of modulus $e$.
Then there exists a $K$-form $V$ satisfying $V\otimes_KK'\cong V'$ and such that the action induced by the generator of the Galois group of $K'|K$ switches the branches of $V'$ if and only if $V'$ is either an open or a closed annulus and $\rho$ divides $e$.
Moreover, when this is the case we have
\[
V \cong \Spf\bigg(\frac{R\big[\big[ X, |a|^{\sfrac{1}{2}} Y \big]\big]}{{(X^2- a Y^2 + u\pi^{\sfrac{e}{\rho}})}}\bigg)^\beth
\]
whenever $V'$ is open, and
\[
V \cong \Spf\bigg(\frac{R\big\{X,|a|^{\sfrac{1}{2}} Y\big\}}{{(X^2- a Y^2 + u\pi^{\sfrac{e}{\rho}})}}\bigg)^\beth
\]
whenever $V'$ is closed, for some unit $u$ of $R$.
\end{theorem}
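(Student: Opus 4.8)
The plan is to deduce both the dichotomy and the explicit description from the linearization result Proposition~\ref{proposition_linearization_switched_branches} together with the descent theorem of Section~\ref{section_descent_model}, after computing by hand the $G$-fixed locus of the relevant dilated Weil restriction. Throughout I write $G=\Gal(K'|K)=\{1,\sigma\}$, I choose a uniformizer $\varpi$ of $R'$ with $\varpi^2\in R$, so that $\sigma(\varpi)=\zeta\varpi$ with $\zeta=1$ when $\rho=1$ and $\zeta=-1$ when $\rho=2$, and I may assume $v(a)\in\{0,1\}$ (replacing $a$ by $a/c^2$ changes neither $K'$ nor, up to an evident rescaling, the displayed model), so that $\delta:=\sqrt a\in R'$ is a unit when $\rho=1$ and a uniformizer when $\rho=2$, with $\sigma(\delta)=-\delta$ in either case.

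For the ``only if'' direction, suppose $V$ is a $K$-form of $V'$ for which $\sigma$ switches the branches of $V'$. Then $\sigma$ is an automorphism of the $K'$-annulus $V'$ of order two that switches its branches, so Proposition~\ref{proposition_linearization_switched_branches} (applied over $K'$, with $\varpi$ playing the role of the uniformizer) forces $V'$ to be open or closed, never semi-open, and forces $\zeta^e=1$; since $\zeta^e=1$ holds automatically when $\rho=1$ and is equivalent to $e$ being even when $\rho=2$, this is precisely the condition $\rho\mid e$.

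For the converse and the explicit model I treat the open case, the closed one being identical after replacing the power series rings below by their convergent analogues. Take $\X'=\Spf\bigl(R'[[X,Y]]/(XY-\varpi^e)\bigr)$, a model of an open $K'$-annulus of modulus $e$, hence of $V'$, with the semilinear $G$-action $\sigma(X)=Y$, $\sigma(Y)=X$, which is well defined, of order two, and switches the branches of $(\X')^\beth$. Writing $X=X_0+X_1\varpi$ and $Y=Y_0+Y_1\varpi$, Proposition~\ref{proposition_dilated_restriction_explicit} (when $\rho=2$) or Lemma~\ref{lemma_restriction_affine} (when $\rho=1$) describes $\prod\nolimits_{R'|R}^\mathrm{dil}\X'$, and the induced action on it is $\sigma(X_0)=Y_0$, $\sigma(X_1)=-Y_1$, $\sigma(Y_0)=X_0$, $\sigma(Y_1)=-X_1$ (the signs coming from $\sigma(\varpi)=-\varpi$); taking coinvariants via Lemma~\ref{lemma_representability_G-fix} imposes $X_0=Y_0$ and $X_1=-Y_1$. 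A direct check then shows that the off‑diagonal coefficient of $XY-\varpi^e$ in the basis $\{1,\varpi\}$ vanishes after this substitution while the diagonal one becomes $X^2-aY^2-u\pi^{e/\rho}$ for a suitable unit $u\in R^\times$ (using $\varpi^2=a$ and that $\varpi^e$ equals $\pi^{e/\rho}$ up to a unit of $R$), the surviving variable $Y$ carrying the radius $|\varpi|=|a|^{1/2}$ produced by the dilation; hence, relabelling the surviving coordinates $X,Y$,
\[
\Bigl(\prod\nolimits_{R'|R}^\mathrm{dil}\X'\Bigr)^G\;\cong\;\Spf\biggl(\frac{R\bigl[\bigl[X,\,|a|^{1/2}Y\bigr]\bigr]}{(X^2-aY^2+u\pi^{e/\rho})}\biggr)=:\X .
\]
Setting $V:=\X^\beth$, I would check that $V\otimes_KK'\cong V'$ with a branch‑switching Galois action as follows: $\delta$ is invertible in $K'$, so the same substitution as above, now read as the analytic change of coordinates $U=X+\delta Y$, $W=X-\delta Y$, diagonalizes the equation to $UW=-u\pi^{e/\rho}$, and (using $|2|=1$ and $|\pi|^{e/\rho}=|\varpi|^{e}$) a comparison of polyradii identifies $V\otimes_KK'$ with an open $K'$-annulus of modulus $e$, hence with $V'$; under this change $\sigma$ fixes $X,Y$ and negates $\delta$, so it exchanges $U$ and $W$ and thus the two components of the canonical reduction. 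For an arbitrary form $V$ as in the statement I would invoke Theorem~\ref{theorem_descent_model} with $X'=V'$ and $\X'$ the canonical model of $V'$, to which the Galois action extends by Remark~\ref{remark_extension_action_canonical_model} and which, by Proposition~\ref{proposition_linearization_switched_branches}, admits a presentation $R'[[X,Y]]/(XY-u'\varpi^e)$ with $\sigma(X)=Y$, $\sigma(Y)=X$ (the compatibility of the $G$-action with the dilated Weil restriction demanded by the theorem being read off the explicit description above, available since $K'|K$ is unramified or totally tamely ramified); running the same computation then gives $V\cong\X^\beth$ with $\X$ as displayed.

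The main obstacle will be the bookkeeping in the ramified case: one must keep track at once of the dilated variables $|\varpi|^iX_i,|\varpi|^iY_i$ coming from Proposition~\ref{proposition_dilated_restriction_explicit}, of the sign in $\sigma(X_1)=-Y_1$ produced by $\sigma(\varpi)=-\varpi$, and of the unit of $R$ relating $\varpi^e$ to $\pi^{e/\rho}$, so as to land precisely on $X^2-aY^2+u\pi^{e/\rho}$ with the radius $|a|^{1/2}$ on the second variable. The hypothesis that the residue characteristic is not $2$ is used only in the diagonalization of $X^2-aY^2$ over $K'$, and that is exactly the step which makes $V\otimes_KK'$ an annulus rather than merely another form of one.
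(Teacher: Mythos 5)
Your proposal is correct and follows essentially the same route as the paper's proof: both use Proposition~\ref{proposition_linearization_switched_branches} to rule out the semi-open case and force $\rho\mid e$ and to normalize the Galois action to $\sigma(X)=Y$, $\sigma(Y)=X$ on a presentation $R'[[X,Y]]/(XY-u\varpi^e)$, then compute the $G$-coinvariants of the dilated Weil restriction separately in the ramified and unramified cases to land on $X^2-aY^2+u\pi^{e/\rho}$, and finally verify by diagonalizing over $K'$ that the resulting space base-changes to $V'$ with a branch-switching action. The only difference is organizational (you construct the candidate form first and then appeal to Theorem~\ref{theorem_descent_model} for an arbitrary form, whereas the paper computes the descent of an arbitrary form directly), which does not change the substance of the argument.
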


\begin{proof}
Let $V$ be a $K$-form such that $V\otimes_KK'\cong V'$.
Proposition~\ref{proposition_linearization} implies that $V'$ is either an open or a closed annulus.
We focus on the former case, as the other one is completely analogous.
Let $\sigma$ be a generator of the Galois group of $K'$ over $K$.
We treat separately two cases, according to whether the extension $K'|K$ is ramified or unramified, and proceed as in the proof of Theorem~\ref{theorem_annuli}, computing explicitly the model $\Spf A_{\left<\sigma\right>}$ of $V$, where $\Spf A= \prod\nolimits^\mathrm{dil}_{R'|R}\Spf\big(\calO^\circ_V(V)\big)$.
Given the similarities with that proof, some details are left to the reader.

\emph{Step (i).} 
If $K'$ is a ramified extension of $K$, we have $R'\cong R\oplus R\varpi$, with $\varpi^2=\pi$, and $\sigma$ acts on $R'$ by multiplying $\varpi$ by $-1$.
Again by Proposition~\ref{proposition_linearization}, the modulus $e$ of $V\otimes_KK'$ is even, and we can write 
$\calO^\circ_V(V) \cong R'[[X,Y]]/(XY-u\varpi^e)$, with $\sigma(X)= Y$ and $\sigma(Y)=X$, and where $u$ is a unit of $R$.
We write $X=X_0+X_1\varpi$ and $Y=Y_0+Y_1\varpi$, so that we have $A \cong R[[X_0,|\varpi| X_{1}, Y_0, |\varpi|  Y_{1}]]/(X_1Y_1+X_1Y_0, X_0Y_0-\pi X_1Y_1-u\pi^{\sfrac{e}{2}})$.
Since we have $\sigma(X_0)=Y_0$ and $\sigma(X_1)=-Y_1$, we deduce that $A_{\left<\sigma\right>} \cong R[[X_0,|\varpi| X_{1}]]/(X_0^2-\pi X_1^2-u\pi^{\sfrac{e}{2}})$, which is what we wanted.

\emph{Step (ii).} If $K'$ is an unramified extension of $K$, we can write $R'=R\oplus R\alpha$ for some $\alpha$ in $R'$ such that $\alpha^2\in R$ and $\sigma(\alpha)=-\alpha$.
We can now apply one last time Proposition~\ref{proposition_linearization}, and the same computations as in Step~(ii) yield as expected $A_{\left<\sigma\right>} \cong R[[X_0,X_{1}]]/(X_0^2-\alpha^2 X_1^2-u\pi^{e})$.

\emph{Step (iii).}
It remains to show that $(\Spf B)^\beth$, for 
$$B={R[[ X, |a|^{\sfrac{1}{2}} Y ]]}/{{(X^2- a Y^2 + u\pi^{\sfrac{e}{\rho}})}},$$
is indeed the $K$-form we expect.
Since $R'=R[a^{\sfrac{1}{2}}]$, with $a\in R$, we have
\[
B\otimes_RR' 
\cong 
\frac{R'[[X,|a|^{\sfrac{1}{2}} Y]]}{(X^2-a Y^2-u\pi^{\sfrac{e}{\rho}})} 
\cong 
\frac{R'[[X,|a|^{\sfrac{1}{2}} Y]]}{\big((X+a^{\sfrac{1}{2}} Y)(X-a^{\sfrac{1}{2}} Y)-u\pi^{\sfrac{e}{\rho}} \big)}.
\]
This is the algebra of bounded functions on an open annulus of modulus $e$, as the change of variables $X'=X+a^{\sfrac{1}{2}} Y$, $Y'=u^{-1}(X-a^{\sfrac{1}{2}} Y)$ shows. 
The automorphism $\sigma$ keeps $X$ fixed but changes the sign of $Y$, and therefore it exchanges the ideals $(X')$ and $(Y')$, which is the last thing that we needed to verify.
\end{proof}

Together with Theorem~\ref{theorem_annuli}, this result implies that, if $K'|K$ is a quadratic extension and the residue characteristic is different from 2, then the cardinality of the set $\mathrm{H}^1\big(\Gal(K'|K),\mathrm{Aut}_{K'}(X)\big)$ of $K$-forms of a $K'$-annulus $X$ verifies:
\[
\left|\mathrm{H}^1\big(\Gal(K'|K),\mathrm{Aut}_{K'}(X)\big)\right|=
\begin{cases}
3 & \text{if $X$ is not semi-open, $\rho=2$,}\\[-4pt] 
  & \text{and $e$ is even,} \\[3pt]
2 & \text{if $X$ is semi-open and $\rho=2$,}\\[-4pt]
  & \text{or $X$ is not semi-open and $\rho=1$,} \\[3pt]
1 & \text{otherwise.}
\end{cases}
\]

\begin{remark}\label{remark_nontrivial_forms}
Let $V$ be a non-trivial $K$-form of an annulus as in Theorem~\ref{theorem_forms_switched_branches}.
Since $V$ is homeomorphic to $V'/\Gal(K'|K)$, and since the Galois action flips the skeleton of the $K'$-annulus $V'$, as observed in Remark~\ref{remark_switched_skeleton}, it follows that $V$ has exactly one boundary point.
An algebraic way to see this consists of showing that the canonical reduction of $V$ is irreducible.
For example, if $V'$ is a closed annulus of modulus $e$ and $K'$ is unramified over $K$, then the formal spectrum of $R\{X, Y\}/(X^2- a Y^2 + u\pi^e)$ is the canonical model of $V$, and so its canonical reduction is the spectrum of $k[X,Y]/(X^2- a Y^2)$, that is irreducible.
\end{remark}

\begin{remark}\label{remark_polydiscs}
The methods of this section allow to study forms of open and closed polydiscs as well.
Indeed, as soon as we know that a Galois action can be made linear on the coordinates of a polydisc, in the same spirit as our Proposition~\ref{proposition_linearization} does for annuli, then the arguments of Theorem~\ref{theorem_annuli} permit to prove the triviality of forms.
While in dimension $1$ this is always satisfied (and it can be shown similarly as in Proposition~\ref{proposition_linearization}), in general whether any tame polynomial action of finite order on the affine $n$-space is linearizable is a well known open problem (see \cite[\S5,6]{Kraft1996} for a thorough discussion).
In particular, beyond the dimension 2 it is currently not known whether non-trivial tame forms of closed polydiscs exist.
Observe that the linearizability assumption is equivalent to the hypothesis of \emph{residually affine} action that Chapuis assumes in \cite[Théorème 2.12]{Chapuis2017} to show the triviality of forms of polydiscs, since if the action is residually affine one can find a suitable lift that can be linearized after a change of coordinates of the polydisc.
\end{remark}

The results of this section can be applied to obtain a non-archimedean analytic proof of a seemingly unrelated result: the existence of resolutions of singularities of surfaces over an algebraically closed field of characteristic zero.
This was first proven by Zariski \cite{Zariski1939}.
We briefly explain how this can be done, building on \cite{Fantini2017}.

\begin{theorem}\label{theorem_resolutions}
Let $k$ be an algebraically closed field of characteristic zero and let $X$ be a surface over $k$.
Then $X$ admits a resolution of its singularities.
\end{theorem}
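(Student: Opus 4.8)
The plan is to translate the statement, through the dictionary of \cite{Fantini2017} between the birational geometry of surfaces over $k$ and semi-stable reduction of curves over $k((t))$, into a question about formal models of a non-archimedean analytic curve, and then to settle that question by feeding an analytic semi-stable reduction theorem into the descent machinery of Theorem~\ref{theorem_descent_model} together with the classification of forms of annuli from Section~\ref{section_forms_annuli}. First I would reduce to the local case: resolution of singularities is local on $X$ and can be obtained by resolving one singular point at a time, and normalizing $X$ is a finite birational modification, so one may assume $X$ is a normal surface with a single singular point $x$. To such a pair $(X,x)$ the construction of \cite{Fantini2017} attaches a smooth $k((t))$-analytic curve $\mathcal{L}$, the non-archimedean link of $(X,x)$, together with a correspondence matching proper modifications of $(X,x)$ that are isomorphisms over $X\setminus\{x\}$, normal, with normal crossings exceptional divisor, on the one hand, and strict normal crossings formal models of $\mathcal{L}$ over $k[[t]]$ on the other, with the singularities of the modification controlled by those of the formal model. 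Since the singularities that will appear are quotient singularities, resolved explicitly by the classical toric procedure for cyclic quotient (Hirzebruch--Jung) and related quotient surface singularities, it suffices to produce some strict normal crossings model of $\mathcal{L}$.

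Write $K=k((t))$ and $R=k[[t]]$. The next step is to invoke the semi-stable reduction theorem for $K$-analytic curves in its non-archimedean analytic form (as in \cite{BosLut85} or \cite{Ducros}, neither of which relies on resolution of surfaces): there is a finite extension of $K$ over which $\mathcal{L}$ becomes semi-stable, and since $\Char k=0$ this extension is automatically tamely ramified. Enlarging it, I may assume it is $K'=k((t^{1/n}))$, a totally tamely ramified cyclic Galois extension with group $G\cong\mu_n$; as $k$ is algebraically closed it contains the $n$-th roots of unity, so all the hypotheses of Theorem~\ref{theorem_annuli} hold. Over $K'$ the curve $\mathcal{L}'=\mathcal{L}\otimes_K K'$ has a semi-stable model, and via the specialization map $\mathcal{L}'$ is obtained by gluing finitely many open discs and open annuli over $K'$ along the combinatorics of the dual graph of the special fiber, with $G$ acting semilinearly and compatibly on all of this data.

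The core of the argument is the $G$-equivariant descent of this combinatorial model. After finitely many $G$-equivariant blow-ups of the semi-stable model, arranged as in the standard treatment of tame group actions on semi-stable curves so that $G$ permutes components and nodes without unwanted fixed points, the stabilizer $G_z$ of each specialization fiber acts on it as a tame automorphism of finite order, which on the annulus pieces either fixes or switches the branches in the sense of Section~\ref{section_annuli}. Propositions~\ref{proposition_linearization} and \ref{proposition_linearization_switched_branches} linearize these actions, and Theorem~\ref{theorem_descent_model} identifies each descended piece with the $\beth$-space of the $G$-fixed locus of a dilated Weil restriction, computed explicitly in Section~\ref{section_forms_annuli}: a fractional disc, a fractional annulus as classified in Theorems~\ref{theorem_moduli_space_fractional_annuli} and \ref{theorem_annuli}, or one of the switched-branch forms $\Spf\big(R[[X,|a|^{\sfrac{1}{2}}Y]]/(X^2-aY^2+u\pi^{\sfrac{e}{\rho}})\big)$ and its closed analogue from Theorem~\ref{theorem_forms_switched_branches}. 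Gluing the corresponding formal $R$-models along the finitely many descended transition maps yields a formal $R$-model $\mathcal{X}$ of $\mathcal{L}$: its special fiber is a strict normal crossings $k$-curve, since the reductions $k[[X,Y]]/(XY)$ and $k[X,Y]/(X^2-\bar aY^2)$ that occur are all normal crossings, while the total space of $\mathcal{X}$ acquires only mild, explicitly resolvable quotient singularities coming from the denominators of the fractional annuli and from the switched branches. Applying the correspondence of \cite{Fantini2017} to $\mathcal{X}$ produces a proper modification $Y\to X$, an isomorphism over $X\setminus\{x\}$, with $Y$ normal, normal crossings exceptional divisor, and only such quotient singularities; resolving the latter by hand concludes the proof.

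The main obstacle I anticipate is the equivariant descent step: one must render the semilinear $G$-action on the semi-stable model well enough behaved — linearizable on every disc and annulus, of the fixed- or switched-branch type, with all stabilizers tame — before the local analysis of Section~\ref{section_forms_annuli} applies, and one must check that the locally descended $R$-models genuinely patch into a single formal $R$-scheme with normal crossings special fiber. A secondary difficulty is to keep precise track, using the bookkeeping of fractional radii from Section~\ref{section_fractional_annuli}, of the class of singularities appearing on the total space of $\mathcal{X}$, so that the correspondence of \cite{Fantini2017} transfers them to surface singularities of the type one can resolve directly.
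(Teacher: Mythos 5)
Your overall strategy (pass to the non-archimedean link, trivialize over a tame extension of $k((t))$, descend discs and annuli) is the right one, but as written there are two genuine gaps, both of which the paper's actual argument is engineered to avoid. First, the global equivariant step: you propose to take a semi-stable model of $\mathcal{L}\otimes_K K'$, arrange the $G$-action by equivariant blow-ups, descend each specialization fiber, and then \emph{glue the descended formal $R$-models} into a single formal scheme $\mathcal{X}$ with strict normal crossings special fiber. None of the machinery in the paper (Theorem~\ref{theorem_descent_model}, Propositions~\ref{proposition_linearization} and \ref{proposition_linearization_switched_branches}) addresses the patching of the local models or the normalization of the $G$-action on a global semi-stable model, and you correctly flag this as the main obstacle without resolving it. The paper sidesteps it entirely: instead of descending a model from $K'$, it applies Ducros' d\'ecoupage theorem \cite[Th\'eor\`eme~5.1.14.(iv)]{Ducros} directly to $\NL(X,x)$ over $K$, which produces a finite set $S'$ of type~2 points such that each complementary component is \emph{already} a $K$-form of an open disc or of an open annulus with fixed branches. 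No gluing of formal models is needed, because the resolution criterion used, \cite[Proposition~10.9]{Fantini2017}, is purely local on the components: it asks only that each $\calO^\circ(V)$ be a regular ring. Note also that Ducros' statement guarantees fixed branches, so the switched-branch forms of Theorem~\ref{theorem_forms_switched_branches} never enter.

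Second, your endgame differs in a way that leaves real work undone: you produce a model whose total space has quotient singularities "coming from the denominators of the fractional annuli" and propose to resolve these by hand via Hirzebruch--Jung, and to track how they transfer to surface singularities through the correspondence of \cite{Fantini2017} --- your "secondary difficulty", again unaddressed. The paper's route produces no singularities at all: after Theorem~\ref{theorem_annuli} identifies each annular component $V$ as a fractional annulus, one adds finitely many more type~2 points to cut $V$ so that $V'=V\otimes_K K'$ has modulus one, whence $\calO^\circ(V')\cong k[[\varpi]][[X,Y]]/(XY-\varpi)\cong k[[X,Y]]$ is regular, and regularity descends to $\calO^\circ(V)$ along the flat local homomorphism $\calO^\circ(V)\to\calO^\circ(V')$ by \cite[\S0, Proposition 17.3.3.(i)]{EGA4.1}. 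If you want to salvage your version, you would need to either prove the equivariant patching statement and the quotient-singularity bookkeeping you assume, or switch to the local regularity criterion, at which point your argument collapses onto the paper's.
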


\begin{proof}
Without loss of generality, by replacing $X$ by its normalization and reasoning locally, we can assume that $X$ is normal and $x\in X$ is its only singular point.
With this data, in \cite{Fantini2017} is associated a locally ringed space $\NL(X,x)$, the non-archimedean link of $x$ in $X$.
Using \cite[Corollary 4.10]{Fantini2017}, we can assume without loss of generality that $\NL(X,x)$ is a smooth Berkovich curve over a discretely valued field of the form $k((\pi))$.
By \cite[Proposition 10.9]{Fantini2017}, a resolution of $(X,x)$ exists if and only if we can find a finite and non-empty set $S$ of type 2 points of $\NL(X,x)$ such that for each connected component $V$ of $\NL(X,x)\setminus S$ the ring $\calO^\circ(V)$ is regular.
By applying \cite[Théorème 5.1.14.(iv)]{Ducros} to $\NL(X,x)$, we deduce that there exists a finite set $S'$ of type $2$ points of $\NL(X,x)$ such that each connected component $V$ of $\NL(X,x)\setminus S'$ satisfies the following condition: there exists a finite separable extension $\mathfrak{s}(V)$ of $k((\pi))$ which consists of analytic functions on $V$, so that $V$ can be seen as an analytic space over $\mathfrak{s}(V)$, and $V$ is either a $\mathfrak{s}(V)$-form of an open disc or a $\mathfrak{s}(V)$-form of an open annulus whose trivializing extension fixes the branches (in \emph{loc. cit.}, see $3.1.1.4$ for the definition of $\mathfrak{s}(X)$).
Since the non-archimedean link $\NL(X,x)$ does not depend on the choice of a base field, and $\mathfrak{s}(V)$ is abstractly isomorphic to $k((\pi))$ itself, without loss of generality we can assume that $V$ is a $k((\pi))$-form of an open disc or of an open annulus with fixed branches.
Assume that $V$ is a form of an annulus.
Since $k$ is algebraically closed and of characteristic zero, the extension of $k((\pi))$ trivializing $V$ is of the form $K'=k((\varpi))$, where  $\varpi$ is a root of $\pi$, and satisfies the hypotheses of Theorem~\ref{theorem_annuli}, so that $V$ is a fractional annulus over $k((\pi))$.
By adding more type 2 points to $S'$ we can cut $V$ into smaller fractional annuli and therefore assume that $V'$ is a $K'$-annulus of modulus one, so that in particular $\calO^\circ(V') \cong k[[\varpi]][[X,Y]]/(XY-\varpi)\cong k[[X,Y]]$ is regular.
Then \cite[\S0, Proposition 17.3.3.(i)]{EGA4.1}, applied to the flat local morphism of local noetherian rings $\calO^\circ(V) \to \calO(V)\otimes_{k[[\pi]]}k[[\varpi]] \cong \calO^\circ(V')$, ensures that $\calO^\circ(V)$ is regular.
The same result shows that the algebras of bounded functions on $K$-forms of open discs are regular as well, which concludes the proof.
\end{proof}


\bibliographystyle{alpha}                              
\bibliography{biblioarboreti}

\vfill

\end{document}